\documentclass[12pt,hyphens,spaces]{amsart}
\usepackage{amsmath, amsthm, amssymb,esint}
\usepackage{fullpage}
\usepackage{color}
\usepackage{hyperref}
%\usepackage{showkeys}
%\usepackage{mathtools} 
%\mathtoolsset{showonlyrefs} 
\usepackage{cancel}
\usepackage{soul}
\usepackage{enumitem}
\usepackage{tikz}
\usetikzlibrary{positioning}
\usepackage[overload]{empheq}

%\usepackage[hyphens,spaces]{url}
%\renewcommand*{\showkeyslabelformat}[1]{%
%   %\fbox
 %
% {\normalfont\tiny\url{#1}\par}}

%%%%%%%%%%%%%%%%%%%%%%%%%%%%%%%%%%%
%Math packages
\usepackage{amscd} %for diagrams
\usepackage{amsfonts} %extra math fonts
\usepackage{amsmath} %miscellaneous enhancements for math formulas
\usepackage{amsrefs} %pack¬¨‚â†age for bib¬¨‚â†li¬¨‚â†ogra¬¨‚â†phies
\usepackage{amssymb} %extra math symbols
\usepackage{amsthm} %theorems set-up
\usepackage{latexsym} %extra math symbols
\usepackage{mathrsfs} %\math¬¨‚â†scr sommand
\usepackage{mathtools} %pro¬¨‚â†vides a se¬¨‚â†ries of pack¬¨‚â†ages de¬¨‚â†signed to en¬¨‚â†hance 
%the ap¬¨‚â†pear¬¨‚â†ance of doc¬¨‚â†u¬¨‚â†ments con¬¨‚â†tain¬¨‚â†ing a lot of math¬¨‚â†e¬¨‚â†mat¬¨‚â†ics
%\mathtoolsset{showonlyrefs} %This numbers only equations that are labeled and referred to
       %Use \mathtoolsset{showonlyrefs=true,showmanualtags=true} to show also manual tags
       %For issues with subequations, use \noeqref to silently reference the equations, see:
       %https://tex.stackexchange.com/questions/62037/mathtools-showonlyrefs-fails-with-subequations
\usepackage{slashed}  %Use¬¨‚â†ful for the Physi¬¨‚â†cist‚Äö√Ñ√¥s ‚Äö√Ñ√≤Feyn¬¨‚â†man slashed char¬¨‚â†ac¬¨‚â†ter‚Äö√Ñ√¥ no¬¨‚â†ta¬¨‚â†tion.
%%%%%%%%%%%%%%%%%%%%%%%%%%%%%%%%%%%

%%%%%%%%%%%%%%%%%%%%%%%%%%%%%%%%%%%
%Basic general latex packages

\usepackage{cases}
\usepackage[dvips]{epsfig}
\usepackage{epsf} %The orig¬¨‚â†i¬¨‚â†nal (and now ob¬¨‚â†so¬¨‚â†les¬¨‚â†cent) graph¬¨‚â†ics in¬¨‚â†clu¬¨‚â†sion 
%macros for use with dvips, still widely used by Plain TEX users (in par¬¨‚â†tic¬¨‚â†u¬¨‚â†lar).
%\usepackage{hyperref}

\usepackage{enumitem} %for breaks in enumeration:
%http://tex.stackexchange.com/questions/135726/intertext-like-command-in-enumerate-environment

%More general latex packages
\usepackage{pifont} %com¬¨‚â†mands for Pi fonts
\usepackage{upgreek} %up¬¨‚â†right Greek let¬¨‚â†ters
\usepackage{fancyhdr} %ex¬¨‚â†ten¬¨‚â†sive fa¬¨‚â†cil¬¨‚â†i¬¨‚â†ties, both for con¬¨‚â†struct¬¨‚â†ing head¬¨‚â†ers and foot¬¨‚â†ers, 
%and for con¬¨‚â†trol¬¨‚â†ling their use
\usepackage{calligra} %cal¬¨‚â†li¬¨‚â†graphic font in the hand¬¨‚â†writ¬¨‚â†ing style
\usepackage{marvosym} %Euro cur¬¨‚â†rency sym¬¨‚â†bol as de¬¨‚â†fined by the Euro¬¨‚â†pean com¬¨‚â†mis¬¨‚â†sion, 
%along with sym¬¨‚â†bols for struc¬¨‚â†tural en¬¨‚â†gi¬¨‚â†neer¬¨‚â†ing; sym¬¨‚â†bols for steel cross-sec¬¨‚â†tions; 
%as¬¨‚â†tron¬¨‚â†omy signs (sun, moon, plan¬¨‚â†ets); the 12 signs of the zo¬¨‚â†diac; scis¬¨‚â†sor sym¬¨‚â†bols; 
%CE sign and oth¬¨‚â†ers.
\usepackage[percent]{overpic} %a cross be¬¨‚â†tween the LATEX pic¬¨‚â†ture en¬¨‚â†vi¬¨‚â†ron¬¨‚â†ment and 
%the \in¬¨‚â†clude¬¨‚â†graph¬¨‚â†ics com¬¨‚â†mand
\usepackage{pict2e} %ex¬¨‚â†tends the ex¬¨‚â†ist¬¨‚â†ing LATEX pic¬¨‚â†ture en¬¨‚â†vi¬¨‚â†ron¬¨‚â†ment
\usepackage[hypcap=false]{caption} %option false to avoid warning when hyperref is loaded
\usepackage{wasysym} %The WASY2 (Waldi Sym¬¨‚â†bol) font by Roland Waldi pro¬¨‚â†vides many 
%glyphs like male and fe¬¨‚â†male sym¬¨‚â†bols and as¬¨‚â†tro¬¨‚â†nom¬¨‚â†i¬¨‚â†cal sym¬¨‚â†bols, as well as the 
%com¬¨‚â†plete lasy font set and other odds and ends
\usepackage{accents}

%%%%%%%%%%%%%%%%%%%%%%%%%%%%%%%%%%%

%%%%%%%%%%%%%%%%%%%%%%%%%%%%%%%%%%%
%Margins, spacing, etc.

%EXACT 1in MARGINS with geometry package                         
%geometry package:
%ftp://ftp.fu-berlin.de/tex/CTAN/macros/latex/contrib/geometry/geometry.pdf
\usepackage[papersize={8.5in,11in}, margin=1in,footskip=0.4in,vmarginratio={1:1},hmarginratio={1:1}]{geometry} 
%nohead option from:
%see the documentaion and:
%https://tex.stackexchange.com/questions/318696/1-inch-all-around
%Uselful: option showframe prints the frames on the page for actual visualization
%marginratio is important when using amsart, see:
%https://tex.stackexchange.com/questions/133860/problem-with-margins-using-amsart-and-geometry-packages

%Changing margins in parts of the text:
%http://tex.stackexchange.com/questions/68855/changing-top-bottom-left-right-margins-on-the-fly
%\begin{adjustwidth}{2cm}{2cm}
%text here
%\end{adjustwidth}
%\usepackage{changepage}
%%%%%%%%%%%%%%%%%%%%%%%%%%%%%%%%%%%

%%%%%%%%%%%%%%%%%%%%%%%%%%%%%%%%%%%
%Counting, table of contents, etc.
\setcounter{secnumdepth}{4} %depth to which section numbering occurs
\setcounter{tocdepth}{4}

%%%%%%%%%%%%%%%%%%%%%%%%%%%%%%%%%%%
%Header, running title
%Comment these to eliminate the running title and authors in the header
%\pagestyle{fancy}
%\headheight 27pt
%\rhead[]{\thepage}
%\chead[\textsf{Relativistic vacuum boundary Euler}]{\textsf{Disconzi, Ifrim, Tataru}}
%\lhead[\thepage]{}
%\rfoot[]{}
%\cfoot{}
%\lfoot{}
%%%%%%%%%%%%%%%%%%%%%%%%%%%%%%%%%%%

%%%%%%%%%%%%%%%%%%%%%%%%%%%%%%%%%%%
%Theorems, definitions, equations, etc.
\theoremstyle{plain}
\newtheorem{theorem}{Theorem}[section]

\newtheorem{lemma}[theorem]{Lemma}
\newtheorem{proposition}[theorem]{Proposition}
\newtheorem{corollary}[theorem]{Corollary}

\newtheorem{remark}[theorem]{Remark}

\newtheorem{definition}[theorem]{Definition}

\numberwithin{equation}{section}
\allowdisplaybreaks
%%%%%%%%%%%%%%%%%%%%%%%%%%%%%%%%%%%

%%%%%%%%%%%%%%%%%%%%%%%%%%%%%%%%%%%
%General commands

%%%%%%%%%%%%%%%%
\newcommand{\norm}[1]{\Vert#1\Vert}

 %use for the time variable, and if one wants to change it for x^0, adjust here

\newcommand{\curl}{\mbox{\upshape curl}\mkern 1mu}

%to create space among tensor indices

%Variables and operators in text form: choose between using 
%\mbox, \operatorname, etc.:
%\newcommand{\uperpvort}{\mbox{\upshape vort}}
%\newcommand{\uperpvort}{\operatorname{vort}}
%See:
%https://tex.stackexchange.com/questions/70632/difference-between-various-methods-for-producing-text-in-math-mode
%https://tex.stackexchange.com/questions/48459/whats-the-difference-between-mathrm-and-operatorname/48463
%for differences

%%%%%%%%%%%%%%%%
%Fluid variables definitions
\newcommand{\MovingDom}{\Omega}
\newcommand{\MovingDomT}{\Omega_t}

\newcommand{\Dt}{D_t}

\newcommand{\G}{G}
\newcommand{\GInv}{G}%{G^{-1}}
\newcommand{\GInvP}{G}%{(G^{-1})}
\newcommand{\Min}{m}

\newcommand{\Vort}{\omega}

\newcommand{\F}{F}

\renewcommand{\H}{{\mathcal H }}
\newcommand{\bfH}{{\mathbf H }}

\newcommand{\rz}{\mathring{r}}
\newcommand{\vz}{\mathring{v}}
\newcommand{\GInvz}{\mathring{G}}

\newcommand{\ro}{\check{r}}
\newcommand{\vo}{\check{v}}
\newcommand{\xo}{\check{x}}

\newcommand{\D}{{\mathcal{D}}}
\newcommand{\tD}{{\tilde{\mathcal{D}}}}

\newcommand{\R}{{\mathbb R}}

\newcommand{\br}{\langle r \rangle}
%%%%%%%%%%%%%%%%%%%%%%%%%%%%%%%%%%%

\begin{document}

\title[Relativistic vacuum boundary Euler]{
The relativistic Euler equations with a physical vacuum boundary: Hadamard local well-posedness,
rough solutions, and continuation criterion}

\author{Marcelo M. Disconzi}
\address{Vanderbilt University, Nashville, TN, USA.}
\email{marcelo.disconzi@vanderbilt.edu}

\author{Mihaela Ifrim}
\address{Department of Mathematics, University of Wisconsin, Madison}
\email{ifrim@wisc.edu}

\author{Daniel Tataru*}
\address{Department of Mathematics, University of California at Berkeley}
\email{tataru@math.berkeley.edu}

\begin{abstract}
In this paper we provide a complete local well-posedness theory for the free boundary relativistic Euler equations with a  physical vacuum boundary on a Minkowski background. 
Specifically, we establish the following results:
(i) local well-posedness in the Hadamard sense, i.e., local
existence, uniqueness, and continuous dependence on the data;
(ii) low regularity solutions: our uniqueness result holds at the level
of Lipschitz velocity and density, while our rough solutions, 
obtained as unique limits of smooth solutions, have regularity 
only a half derivative above scaling;
(iii) stability: our uniqueness in fact follows
from a more general result, namely, we show that a certain
nonlinear functional that tracks the distance between two solutions (in part by measuring the distance between their
respective boundaries) is propagated by the flow;
(iv) we establish sharp, essentially scale invariant energy estimates for solutions; (v) a 
sharp continuation criterion, at the level of scaling, showing
that solutions can be continued as long as the velocity is in $L^1_t Lip$ and
a suitable weighted version of the density is at the same regularity
level.

Our entire approach is in Eulerian coordinates and relies on the functional framework developed in the companion work of the second and third authors on corresponding  non relativistic problem. All our results are valid for a general equation of state $p(\varrho)= \varrho^\gamma$, $\gamma > 1$.

%   \noindent \textbf{Mathematics Subject Classification (2010):} 
\subjclass{Primary: 35Q75; %PDEs in connection with relativity and gravitational theory 
Secondary: 	35L10, %	Second-order hyperbolic equations
	35Q35,  	%PDEs in connection with fluid mechanics
%35L67, %	Shocks and singularities [See also 58Kxx, 76L05]
%35Q31, %Euler equations
%35R37. %Moving boundary problems.
}
\keywords{Euler equations, Moving boundary problems}
\end{abstract}

\maketitle
\setcounter{tocdepth}{1}
\tableofcontents

\section{Introduction\label{S:Intro}}

In this article, we consider the relativistic Euler equations, which describe the motion of a relativistic  fluid in a Minkowski background $\mathbb M^{d+1}$, $d \geq 1$. The fluid state is represented by the (energy) \emph{density} $\varrho \geq 0$, and the 
\emph{relativistic velocity} $u$. The velocity is assumed to be a forward time-like vector field, normalized 
by 
\begin{align}
u^\alpha u_\alpha = -1.
\label{E:Normalization_u}
\end{align}
The equations of motion consist of
\begin{align}
\partial_\alpha \mathcal{T}^\alpha_\beta = 0,
\label{E:Div_T}
\end{align}
where $\mathcal{T}$ is the energy-momentum tensor for a perfect fluid, defined by
\begin{align}
\mathcal{T}_{\alpha\beta} := (p + \varrho) u_\alpha u_\beta + p \, \Min_{\alpha\beta},
\label{E:Energy_momentum_tensor}
\end{align}
Here $\Min$ is the Minkowski metric, and $p$ is the \emph{pressure}, which is subject to the 
equation of state
\begin{align}
p = p(\varrho) .
\nonumber
\end{align}
Projecting \eqref{E:Div_T} onto the directions parallel and perpendicular to $u$, using  definition
\eqref{E:Energy_momentum_tensor}, and the identity \eqref{E:Normalization_u}, yields the system
\begin{equation}\label{E:Basic}
\left\{ \begin{aligned}
& u^\mu \partial_\mu \varrho + (p + \varrho)\partial_\mu u^\mu = 0
\\
& (p+\varrho) u^\mu \partial_\mu u_\alpha + \Pi^\mu_\alpha \partial_\mu p = 0,
\end{aligned}\right.
\end{equation}
with $u$ satisfying the constraint \eqref{E:Normalization_u}, which is in turn 
preserved by the time evolution.
Here $\Pi$ is is the projection on the space orthogonal to $u$ and is given by 
\begin{align}
\Pi_{\alpha\beta} =  \Min_{\alpha\beta} +  u_\alpha u_\beta.
\nonumber
\end{align}

Throughout this
paper, we adopt standard rectangular coordinates
in Minkowski space, denoted by $\{x^0,x^1,\dots,x^d\}$,
and we identify $x^0$ with a time coordinate, $t:= x^0$. Greek indices vary from $0$ to $d$
and Latin indices from $1$ to $d$.

The system \eqref{E:Basic} can be seen as a nonlinear hyperbolic system, which in the reference frame of the moving fluid
has the propagation speed
\begin{align}
c_s^2(\varrho) := \frac{ d p }{d\varrho},
\nonumber
\end{align}
which is subject to 
\begin{align}
0 \leq c_s < 1,
\nonumber
\end{align}
implying that the speed of propagation of sound waves is always non-negative and below the speed of light (which equals to one in the units we adopted). 

\medskip

In this article we consider the physical situation where vacuum states are allowed, i.e. the density is  allowed to vanish.  The gas is located in the moving domain 
\begin{align}
\MovingDomT :=   \{ x\in \mathbb{R}^{d} \, | \, \varrho(t,x) > 0 \},
\nonumber
\end{align}
whose boundary $\Gamma_t$ is the \emph{vacuum boundary}, which is 
advected by the fluid velocity $u$. 

The distinguishing characteristic of a gas, versus the case of a liquid, is that the density,
and implicitly the pressure and the sound speed, vanish on the free boundary $\Gamma_t$,
\begin{equation}
    \varrho = 0, \quad p = 0, \quad c_s = 0  \qquad \text{ on } \Gamma_t.
    \nonumber
\end{equation}
Thus, the equations studied here
provide a basic model of relativistic gaseous stars (see Section \ref{S:Historical}).
An appropriate equation of state to describe this
situation is\footnote{Observe that the requirement
$0\leq c_s^2 <  1$ imposes a bound on $\varrho$. This occurs
because power-law equations of state such as 
\eqref{E:Equation_of_state_general} are no longer valid
if the density is very large \cite{Kurkela:2014vha}.}
(see, e.g., \cite{RezzollaZanottiBookRelHydro}*{Section 2.4}
or \cite{Rendall-1992}):
\begin{align}
p(\varrho) =  \varrho^{\kappa+1}, \, \text{ where } \kappa > 0
\text{ is a constant.}
\label{E:Equation_of_state_general}
\end{align}

The decay rate of the sound speed at the free boundary plays a critical role. 
Precisely,
there is a unique, natural decay rate which is consistent with the time evolution 
of the free boundary problem for the relativistic Euler gas, which is commonly referred to as
\emph{physical vacuum}, and has the form 
\begin{align}
 c^2_s (t,x) \approx \operatorname{dist}(x,\Gamma_t) \qquad \text{ in } \Omega_t,
 \label{E:Physical_condition_c_bry}
\end{align}
where $\operatorname{dist (\cdot, \cdot)}$ is the distance function.
Exactly the same requirement is present in the non-relativistic compressible Euler equations.
As in the non-relativistic setting,  \eqref{E:Physical_condition_c_bry} should be considered as a condition on the initial data that is propagated by the time-evolution.

\medskip

There are two classical approaches in fluid dynamics, using either Eulerian coordinates,
where the reference frame is fixed and the fluid particles are moving,  or using Lagrangian
coordinates, where the particles are stationary but the frame is moving.  Both of these 
approaches have been extensively developed in the context of the Euler 
equations, where the local well-posedness problem is very well understood. 

By contrast, the free boundary problem corresponding to the physical vacuum  
has been far less studied and understood. Because of the difficulties related
to the need to track the evolution of the free boundary, all the prior work is 
in the Lagrangian setting and in high regularity spaces which are only indirectly defined.

Our goal in this paper is to provide the \emph{first local well-posedness result for this problem.}
Unlike previous approaches, which were limited to proving 
energy-type estimates 
at high regularity and in a Lagrangian setting \cites{Jang-2015,Hadzic-Shkoller-Speck-2019}, here we consider this problem fully within the Eulerian framework, where we provide a complete local well-posedness theory, in the Hadamard sense, in a low regularity setting. We summarize here the main features of our result, which mirror 
the results in the last two authors' prior paper devoted to the non-relativistic problem~\cite{IT-norel}, referring to Section \ref{S:Results} for precise statements:

\begin{enumerate}[label=\alph*)]
\item We prove the \emph{uniqueness} of solutions with very limited regularity\footnote{In an appropriately weighted sense in the case of $\varrho$, see Theorem~\ref{T:Uniqueness}.}  $v \in Lip$, $\varrho \in Lip$. More generally, at the same regularity level we prove \emph{stability}, by showing that bounds for a certain nonlinear distance between different solutions can be propagated in time.

\item Inspired by \cite{IT-norel}, we set up the Eulerian Sobolev \emph{function space structure} where this problem should be considered, providing the correct, natural scale of spaces for this evolution.

\item We prove sharp, \emph{scale invariant\footnote{While this problem does not have 
an exact scaling symmetry, one can still identify a leading order scaling.} energy estimates} within the above mentioned scale of spaces,
which guarantee that the appropriate  Sobolev regularity of solutions can be continued for as long as we have uniform bounds at the same scale  $v \in Lip$. 

\item We give a constructive proof of \emph{existence} for regular solutions, fully within the Eulerian setting, based on  the above energy estimates.

\item We employ a nonlinear Littlewood-Paley type method, developed  prior work \cite{IT-norel}, in order to obtain \emph{rough solutions} as unique limits of smooth solutions. This also yields the \emph{continuous dependence} of the solutions on the initial data.
\end{enumerate}

\subsection{Space-time foliations and the material derivative}
The relativistic character of our problem implies that there
is no preferred choice of coordinates. On the other
hand, in order to derive estimates and make quantitative
assertions about the evolution, we have
to choose a foliation of spacetime by space-like hypersurfaces.
Here, we take advantage of the natural set-up provided
by Minkowski space and foliate the spacetime
by $\{ t = constant \}$ slices. We then define
the material derivative, which is adapted to this
specific foliation, as
\begin{align}
    \Dt := \partial_t + \frac{u^i}{u^0} \partial_i.
    \label{E:Material_derivative_u}
\end{align}
The vectorfield $\Dt$ is better adapted to the study
of the free-boundary evolution than
working directly with $u^\mu \partial_\mu$. 
Indeed, in order to track the motion of fluid particles on the boundary, we need to understand 
their velocity relative to the aforementioned spacetime
foliation. The velocity 
that is measured by an observer in a reference frame characterized by the coordinates $(t,x^1,\dots,x^d)$
is $u^i/u^0$.  This is a consequence of the fact that in relativity observers are defined by their world-lines, which can be 
reparametrized. This ambiguity is fixed by imposing the constraint $u^\mu u_\mu = -1$. As a consequence,
the $d$-dimensional vectorfield $(u^1,\dots,u^d)$ can have norm arbitrarily large, while the physical
velocity has to have norm at most one (the speed of light). 

It follows, in particular, that fluid particles on the boundary move with velocity $u^i/u^0$. These considerations
also imply that the standard differentiation
formula for moving domains holds with $\Dt$, i.e.,
\begin{align}
\frac{d}{dt} \int_{\MovingDomT}  f \, dx
= \int_{\MovingDomT} \Dt  f \, dx+ \int_{ \MovingDomT} f \partial_i \left( \frac{u^i}{u^0} \right)\, dx.
\label{E:Derivative_moving_domain}
\end{align}
This formula remains
valid with the good variable $v$  we introduce
below since $v^i/v^0 = u^i/u^0$.

\subsection{The good variables\label{S:Good_variables}} The starting
point of our analysis is a good choice of dynamical
variables. We seek variables that are tailored
to the characteristics of the Euler flow \emph{all the way to moving boundary,} where the sound characteristics degenerate due to the vanishing of the sound speed. Our choice of good variables will 

\begin{enumerate}[label=(\roman*)]
\item better diagonalize the system with respect to the material derivative, 
\item  be associated with truly relativistic properties of the vorticity, and 
\item lead to good weights that allow us to control the behavior of the fluid variables when
one approaches the boundary. 
\end{enumerate}

Property (i) will be intrinsically tied
with both the wave and transport character
of the flow 
in that (a) the diagonalized 
equations lead to good second order equations
that capture the propagation of sound in the 
fluid, see 
Section \ref{S:Transition_operators}, and (b)
it provides a good transport structure
that will allow us to implement
a time discretization for the construction
of regular solution, see Section \ref{S:Regular_solutions}.
Property (ii)
will ensure a good coupling between
the wave-part and the transport-part of the system.
Finally, property (iii) will lead to the correct
functional framework needed to close the estimates\footnote{It is
well known that we can think of the relativistic Euler flow as a
wave-transport system. What is relevant here is that
the wave evolution that comes out of the
diagonalized equations allows estimates all the way to the 
free surface.}. Our good variables, denoted by $(r,v)$,
are defined in \eqref{E:v_def} and \eqref{E:r_def}.
The corresponding equations of motion
are \eqref{E:r_v_eq}, which we now derive. 

Our first choice of good variables is a rescaled
version of the velocity given by
\begin{align}
v^\alpha = f(\varrho) u^\alpha,
\label{E:v_def}
\end{align} 
where $f$ is given by
\begin{align}
    f(\varrho) := \exp 
    \int \frac{c_s^2(\varrho)}{p(\varrho) + \varrho} \, d\varrho.
      \label{E:f_of_rho_general_def}
\end{align}
Although we are interested in the case 
$p(\varrho) = \varrho^{\kappa+1}$, it is instructive to 
consider first a general barotropic equation
of state; see the discussion related to the 
vorticity further below.

In order to understand our choice for $f$,
compute
\begin{align}
    \partial_\mu v^\alpha = f^\prime(\varrho) \partial_\mu \varrho 
    u^\alpha + f(\varrho) \partial_\mu u^\alpha.
    \nonumber
\end{align}
Solving for $\partial_\mu u^\alpha$ and plugging
the resulting expression into the second equation
of \eqref{E:Basic} we find
\begin{align}
    \frac{p +\varrho}{f} u^\mu \partial_\mu v^\alpha
    + c_s^2 \Min^{\alpha\mu} \partial_\mu \varrho
    + \left(-\frac{f^\prime}{f} (p + \varrho) + 
    c_s^2 \right) u^\alpha u^\mu \partial_\mu \varrho 
    = 0.
    \nonumber
\end{align}
We see that the term in parenthesis vanishes if
$f$ is given by \eqref{E:f_of_rho_general_def}, resulting
in an equation which is diagonal with respect to the material 
derivative, and which we write as
\begin{align}
    \Dt v^\alpha + \frac{c_s^2 f^2}{(p+\varrho) v^0} \Min^{\alpha \mu} \partial_\mu \varrho = 0.
    \label{E:v_eq_general}
\end{align}
We notice that in terms of $v$, the material
derivative \eqref{E:Material_derivative_u} reads
\begin{align}
\Dt & =  \partial_t + \frac{v^i}{v^0} \partial_i.
\nonumber
\end{align}
In view of the constraint \eqref{E:Normalization_u}, 
we have that $v^0$ satisfies
\begin{align}
    v^0 = \sqrt{f^2 + |v|^2}, \quad |v|^2 & := v^i v_i,
    \label{E:v_0_constraint_general}
\end{align}
and in solving for $v^0$ we chose the positive
square root because $u$, and thus $v$, is a future-pointing
vectorfield. 

We now show that our choice \eqref{E:v_def} also diagonalizes the first equation in \eqref{E:Basic}. First,
we use \eqref{E:v_eq_general} with $\alpha=0$
and solve to $\partial_t v^0$, obtaining
\begin{align}
    \begin{split}
        \partial_t v^0 &= 
        \frac{c_s^2 f^2}{(p+\varrho)v^0} \partial_t \varrho
        - \frac{v^i}{v^0} \partial_i v^0
        \\
        &= \frac{c_s^2 f^2}{(p+\varrho)v^0} \partial_t \varrho
        - \frac{f f^\prime}{(v^0)^2} v^i \partial_i \varrho 
        - \frac{v^i v^j}{(v^0)^2} \partial_i v_j,
    \end{split}
    \nonumber
\end{align}
where in the second equality we used \eqref{E:v_0_constraint_general}
to compute $\partial_i v^0$. Using the above 
identity for $\partial_t v^0$, we find the following
expression for $\partial_\mu v^\mu$:
\begin{align}
    \partial_\mu v^\mu 
    = \frac{c_s^2 f^2}{(p+\varrho)v^0} \partial_t \varrho
    - \frac{f f^\prime}{(v^0)^2} v^i \partial_i \varrho
    + \left( \updelta^{ij} - \frac{v^i v^j}{(v^0)^2} \right)
    \partial_i v_j,
    \nonumber
\end{align}
where $\updelta$ is the Euclidean metric.
Expressing $\partial_\mu u^\mu$ in terms of 
$\partial_\mu v^\mu$ (and derivatives of $\varrho$)
and using the above expression for $\partial_\mu v^\mu$,
we see that the first equation in \eqref{E:Basic}
can be written as
\begin{align}
    \Dt \varrho + \frac{p+\varrho}{ a_0 v^0} 
    \left( \updelta^{ij} - \frac{v^i v^j}{(v^0)^2} \right)
    \partial_i v_j
    - c_s^2 \frac{2 f^2 }{a_0 (v^0)^3} v^i \partial_i
    \varrho = 0.
    \label{E:rho_eq_general}
\end{align}
Here we are using the notation
\begin{align}\label{def-a0}
a_0 & := 1 - c_s^2\frac{|v|^2}{(v^0)^2}.
\end{align}

Observe that equations
\eqref{E:v_eq_general} and \eqref{E:rho_eq_general}
are valid for a general barotropic equation
of state. We now assume the equation of state
\eqref{E:Equation_of_state_general}.
Then the sound speed is given by
$c_s^2 = (\kappa+1)\varrho^\kappa$
and $f$ becomes $f(\varrho) = (1+\varrho^\kappa)^{1+\frac{1}{\kappa}}$ (we 
choose the constant of integration by setting $f(0)=1$, so 
that $v=u$ when $\varrho=0$). It turns out
that it is better to adopt the sound speed squared as 
a primary variable instead of $\varrho$ because
it plays the role of the
correct weight in our energy functionals. We thus define\footnote{The factor
$\frac{1}{\kappa}$ in the definition of $r$ is 
a matter of convenience. Although $r$ and $c_s^2$ differ
by this factor, we slightly abuse the terminology
and also call $r$ the sound speed squared.} the second component of our good variables by
\begin{align}
    r:=\frac{1+\kappa}{\kappa} \varrho^\kappa.
    \label{E:r_def}
\end{align}

Therefore, using $(r,v)$ as  our good variables, and  $p(\varrho)$ given by \eqref{E:Equation_of_state_general} we find that 
the equations \eqref{E:v_eq_general} and \eqref{E:rho_eq_general} become
\begin{subequations}{\label{E:r_v_eq}}
\begin{align}[left = \empheqlbrace\,]
  & \Dt r + r \GInvP^{ij} \partial_i v_j  + r a_1 v^i \partial_i r = 0 
  \label{E:r_eq}
  \\
  & \Dt v_i + a_2 \partial_i r  = 0,
  \label{E:v_eq}
\end{align}
\end{subequations}
where we have defined
\begin{equation}
\GInvP^{ij}  := \frac{\kappa \br}{a_0 v^0} \left( \updelta^{ij} - \frac{v^i v^j}{(v^0)^2} \right), \quad \br:= 1+\frac{\kappa r}{\kappa +1}, 
\nonumber
\end{equation}
and the coefficients $a_0,a_1$ and $a_2$ are given by 
\begin{equation}
\nonumber
a_0  := 1 - \kappa r \frac{|v|^2}{(v^0)^2},
\qquad
a_1  := -\frac{2 \kappa \br^{2+\frac{2}{\kappa}} }{(v^0)^3 a_0},
\qquad
a_2  := \frac{\br^{1+\frac{2}{\kappa}}}{v^0}.
\end{equation}

Equations \eqref{E:r_v_eq} are the desired 
diagonal with respect to $\Dt$ equations, and the rest 
of the article will be based on them.
In writing these equations we consider
only the spatial components $v^i$  
as variables, with $v^0$ always given by
\begin{align}
    v^0 = \sqrt{\br^{2+\frac{2}{\kappa}} + |v|^2}.
    \label{E:v_0_constraint}
\end{align}
The specific form 
of the coefficients $a_0$, $a_1$, and $a_2$ is not
very important for our argument. We essentially only
use that they are smooth functions of $r$ and $v$, and
that $a_0, \, a_2>0$.

The operator $\GInvP^{ij} \partial_i (\cdot)_j$ can be viewed 
as a divergence type operator. This divergence structure is related to the
fact that equations \eqref{E:r_v_eq}
express the wave-like behavior of $r$ and
of the
divergence part of $v$. The symmetric  
and positive-definite matrix
$c_s^2 \GInvP^{ij}$ is closely related to the inverse 
of the acoustical metric; precisely, they agree 
at the leading order near the boundary.

As we will see, equations \eqref{E:r_v_eq}
also have the correct balance of powers
of $r$ to allow estimates all the way to the
free boundary. The $r$ factor in the divergence
of $v$ is related to the propagation
of sound in the fluid (see Section
\ref{S:Transition_operators}) whereas the
$r$ factor in the last term 
of \eqref{E:r_eq} will allow us to treat 
$r a_1 v^i \partial_i r$ essentially as
a perturbation at least in elliptic estimates
(see Section \ref{S:Energy_estimates}).

One can always diagonalize equations
\eqref{E:Basic} by simply algebraically 
solving for 
$\partial_t (\varrho,u)$. But it is not difficult
to see that this procedure will not lead
to equations with good structures for the study
of the vacuum boundary problem. In this regard, 
observe that the choice \eqref{E:v_def}
is a nonlinear change of variables, whereas
algebraically solving for $\partial_t (\varrho,u)$
is a linear procedure. 

We now comment on the relation between
$v$ and the vorticity of the fluid $\omega$.
It is well-known (see, e.g.,
\cite{Choquet-BruhatBook}*{Section IX.10.1})
that in relativity
the correct notion of vorticity
is given by the following two-form in spacetime
\begin{align}
\begin{split}
\Vort_{\alpha \beta} & := (d_{st} v )_{\alpha\beta}  = \partial_\alpha v_\beta - \partial_\beta  v_\alpha,
\end{split}
\label{E:Relativistic_vort_def}
\end{align}
where $d_{st}$ is the exterior derivative in spacetime. This is true 
not only for the power law equation of state \eqref{E:Equation_of_state_general},
but also for an arbitrary barotropic equation of state.

A computation using 
\eqref{E:Relativistic_vort_def} 
(see, e.g., \cite{Choquet-BruhatBook})
and the equations of motion implies
that
\begin{align}
v^\alpha \Vort_{\alpha \beta} = 0,
\label{E:Contraction_vort_V}
\end{align}
and that $\Vort$ satisfies the following evolution equation
\begin{align}
v^\mu \partial_\mu \Vort_{\alpha\beta} + \partial_\alpha v^\mu \Vort_{\mu \beta} 
+ \partial_\beta v^\mu \Vort_{\alpha \mu } = 0.
\label{E:Relativistic_vorticity_evolution}
\end{align}
Observe that \eqref{E:Relativistic_vorticity_evolution} implies that
$\Vort = 0$ if it vanishes initially.

Since we will consider only the spatial components of $v$ as independent, we use 
\eqref{E:Contraction_vort_V} to eliminate the 
$0j$ components of $\Vort$ from \eqref{E:Relativistic_vorticity_evolution} as follows.
From \eqref{E:Contraction_vort_V} we can write
\begin{align}
\Vort_{0 j} = -\frac{v^i}{v^0} \Vort_{i j}.
\label{E:Vort_0i_spatial}
\end{align}
Using \eqref{E:Vort_0i_spatial} into \eqref{E:Relativistic_vorticity_evolution} 
with $\alpha,\beta=i,j$ we finally obtain
\begin{align}
\Dt \Vort_{ij} 
+ \frac{1}{v^0} \partial_i v^k \Vort_{k j} 
+ \frac{1}{v^0}  \partial_j v^k \Vort_{i k } 
- \frac{1}{(v^0)^2}  \partial_iv^0v^k \Vort_{kj}
+ \frac{1}{(v^0)^2}  \partial_j v^0 v^k\Vort_{ki}
= 0.
\label{E:Transport_vorticity}
\end{align}
Equation \eqref{E:Transport_vorticity} will be used to derive estimates for $\Vort_{ij}$ that will
complement the estimates for $r$ and the
divergence of $v$ obtained from \eqref{E:r_v_eq}.

We remark that in the literature, the use of $v$, given
by \eqref{E:v_def}, seems
to be restricted mostly to definition and 
evolution of the vorticity. To the best of our
knowledge this is the first time that it was
observed that the same change of variables needed
to define the relativistic vorticity also
diagonalizes the equations of motion with respect
to $\Dt$.

\subsection{Scaling and bookkeeping scheme\label{S:Bookkeeping}}
Although equations \eqref{E:r_v_eq}
do not obey a scaling law, it is still possible
to identify a scaling law for the leading
order dynamics near the boundary. This will 
motivate the control
norms we introduce in the next section, as well 
as provide a bookkeeping scheme that will allow us
to streamline the analysis of many complex multilinear
expressions we will encounter.

As we will see, the contribution of last term in \eqref{E:r_eq} to our energies is negligible, due to the multiplicative $r$ factor. Thus, we ignore this term
for our scaling analysis\footnote{ And then it indeed turns out to be lower order 
from a scaling perspective.}.
Replacing all
coefficients that are functions of $(r,v)$ by $1$, while keeping the transport
and divergence structure present in the equations, we obtain
the following simplified version of \eqref{E:r_v_eq}:
\begin{subequations}{\label{E:Leading_order_eq_scaling}}
\begin{align}[left = \empheqlbrace\,]
& ( \partial_t + v^j \partial_j) r + r \updelta^{ij} \partial_i v_j \sim 0
\label{E:Leading_order_r}
\\
& (\partial_t + v^j \partial_j)  v_i + \partial_i r \sim 0.
\label{E:Leading_order_v}
\end{align}
\end{subequations}
This system is expected to capture the leading order dynamics near the boundary, and also mirrors
the nonrelativistic version of the compressible Euler equations, considered in the predecessor 
 to this paper, see \cite{IT-norel}.
Equations \eqref{E:Leading_order_eq_scaling} admit the scaling law
\begin{align}
(r(t,x),v(t,x)) \mapsto (\lambda^{-2} r(\lambda t, \lambda^2 x), \lambda^{-1} v(\lambda t, \lambda^2 x) ).
\nonumber
\end{align}
Based on this leading order scaling analysis, we assign the following order to the variables and operators in equations \eqref{E:r_v_eq}:
\begin{enumerate}[label=(\roman*)]
\item $r$ and $v$ have order $-1$ and $-1/2$, respectively.
More precisely, we only count $v$ as having order $-1/2$ when it is differentiated. Undifferentiated $v$'s have order zero.
\item $\Dt$ and $\partial_i$ have order $1/2$ and $1$, respectively.
\item $\GInv$, $a_0$, $a_1$, and $a_2$, and more generally, any smooth function of $(r,v)$
not vanishing at $r = 0$, have order $0$. 
\end{enumerate}
Expanding on (iii) above, the order of a function of $r$ is defined
by the order of its leading term in the Taylor expansion about $r=0$, being of order zero if
this leading term is a constant. The order of a multilinear expression is defined as the sum of the orders of each factor. Here we remark that all expressions arising in this paper
are multilinear expressions, with the possible exception of nonlinear factors as in (iii) above.

According to this convention, all terms in equation \eqref{E:v_eq} have order zero, and all terms in \eqref{E:r_eq} have order $-1/2$, except for the last term in 
\eqref{E:r_eq} which has order $-1$. Upon successive differentiation of any multilinear 
expression with respect to $\Dt$ or $\partial$, all terms produced are the same (highest)
order, unless some of these derivatives apply to nonlinear factors as in (iii);
then lower order terms are produced.

\subsection{Energies, function spaces, and control norms\label{S:Energies_and_control_norms}}
Here we introduce the function spaces and control norms that we
need in order to state our main results.  A more detailed discussion  is given in Section \ref{S:Function_spaces}. With some obvious adjustments, here we follow the lat two authors' prior work in \cite{IT-norel}.
We assume throughout that $r$ is 
a positive function on $\MovingDomT$, vanishing
simply on the boundary, and so that 
$r$ is comparable to the distance to the boundary $\Gamma_t$.

In order to identify the correct functional 
framework for our problem, we start with the linearization
of the equations \eqref{E:r_v_eq}. In Section \ref{S:Linearized}
we show that the linearized equations admit the following
energy
\begin{align}
    \norm{(s,w)}^2_{\mathcal{H}} = 
    \int_{\MovingDomT}     r^{\frac{1-\kappa}{\kappa}}  ( s^2  
+ a_2^{-1} r \GInvP^{ij} w_i w_j )\, dx,
\nonumber
\end{align}
which defines the (time dependent) weighted $L^2$ space $\H$.

The motivation for the definition of higher order norms and spaces
comes from the good second order equations mentioned in Section \ref{S:Good_variables}.
From equations \eqref{E:r_v_eq}, we find that
the second order evolution is governed at leading order
by a wave-like operator which is essentially a variable coefficient version of 
$\Dt^2 - r \Delta$. This  points toward higher order spaces built on powers
of $r\Delta$. Taking into account also the form
of the linearized energy above, we are led to the following.
We define $\mathcal{H}^{2k}$ as the space of pairs of functions $(s,w)$ in $\MovingDom_t$
for which the norm below is finite
\begin{align}
\norm{(s,w)}^2_{\mathcal{H}^{2k}} := 
\sum_{\vert \alpha \vert =0}^{2k} 
\sum_{\substack{a=0 \\
|\alpha|- a\leq k}}^{k} \norm{ r^{\frac{1-\kappa}{2\kappa} + a} 
\partial^\alpha  s}^2_{L^2}
+
\sum_{\vert \alpha \vert =0}^{2k} 
\sum_{\substack{a=0 \\
|\alpha|- a\leq k}}^{k}
\norm{ r^{\frac{1-\kappa}{2\kappa}+\frac{1}{2} + a} \partial^\alpha w }^2_{L^2}.
\nonumber
\end{align}
The definition of $\mathcal{H}^{2k}$ for non-integer
$k$ is given in Section \ref{S:Function_spaces}, via interpolation.

In view of the scaling analysis of Section
\ref{S:Bookkeeping}, we introduce the 
critical space $\mathcal{H}^{2k_0}$ where
\begin{align}
    2k_0 = d + 1 + \frac{1}{\kappa},
    \label{E:Critical_exponent}
\end{align}
which has the property that its leading order homogeneous component
is invariant with respect to the scaling 
discussed in Section \ref{S:Bookkeeping}. Associated with the exponent $2 k_0$
we define the following scale invariant
time dependent
control norm
\begin{align}
    A := \norm{ \nabla r - N }_{L^\infty}
    + \norm{v}_{\dot{C}^\frac{1}{2}}.
    \nonumber
\end{align}
Here, $N$ is a given non-zero vectorfield with 
the following property. In each sufficiently
small neighborhood of the boundary,
there exists a $x_0 \in \Gamma_t$ such that 
$N(x_0) = \nabla r(x_0)$. The fact that we can
choose such a $N$ follows from the properties 
of $r$. The motivation for introducing
$N$ is that we can make $A$ small by working in small neighborhood
of each reference point $x_0$, whereas $\norm{\nabla r}_{L^\infty}$ is a scale invariant
quantity that cannot be made small by localization
arguments.

We further introduce a second time dependent 
control norm that is associated with 
$H^{2k_0 + 1}$, given by\footnote{ In \cite{IT-norel} the $A$ component is omitted, and $B$
is a homogeneous norm. But here, we need to also add the lower order component $A$
in order to be able to handle lower order terms.}

\begin{align}
    B := A+ \norm{ \nabla r}_{\tilde{C}^\frac{1}{2}}
    + \norm{\nabla v}_{L^\infty},
    \nonumber
\end{align}
where
\begin{align}
    \norm{f}_{\tilde{C}^\frac{1}{2}}
    := \sup_{x,y \in \MovingDomT}
    \frac{ |f(x) - f(y)|}{
    r(x)^\frac{1}{2} + r(y)^\frac{1}{2} +
    |x-y|^\frac{1}{2}}.
    \nonumber
\end{align}
It follows that 
$\norm{ \nabla r}_{\tilde{C}^\frac{1}{2}}$ scales
like the $\dot{C}^\frac{3}{2}$ norm of $r$, but 
it is weaker in that it only uses one derivative
of $r$ away from the boundary.
The norm $B$ will control the growth of our energies, allowing for a secondary dependence
on $A$.

\bigskip

When the density is bounded away from zero,  
the relativistic Euler equations can be written as a first-order symmetric
hyperbolic system (see, e.g., \cite{AnileBook}) and standard techniques can be applied to derive 
local estimates. The difficulties in our case come from the vanishing of $r$
on the boundary. Using the finite speed
of propagation of the Euler flow, we can use a partition of unity to separate the near-boundary
behavior, where $r$ approaches zero, from the bulk dynamics, where
$r$ is bounded away from zero. Furthermore, we can also localize to a small set where
$A$ is small. Such a localization will be implicitly assumed in all our analysis, in order 
to avoid cumbersome localization weights through the proofs.

\subsection{The main results\label{S:Results}}
Here we state our main results. Combined, these
results establish the sharp local well-posedness
and continuation criterion discussed earlier.
We will make all our statements for the system written in terms
of the good variables $(r,v)$, i.e., equations
\eqref{E:r_v_eq}. Readers interested 
in the evolution of \eqref{E:Basic} 
should have no difficulty translating our statements to the original variables 
 $\varrho$ and $u$.

We recall that equations \eqref{E:r_v_eq} are always
considered in the moving domain given by
\begin{align}
    \MovingDom := \bigcup_{0\leq t <T} \{ t \} 
    \times \MovingDomT,
    \nonumber
\end{align}
for some $T>0$, where the moving domain at time $t$, $\MovingDomT$, is given
by 
\begin{align}
    \MovingDomT = \{ x \in \mathbb{R}^d \, | \, 
    r(t,x) > 0 \}.
    \nonumber
\end{align}
We also recall that we are interested in solutions
satisfying the physical boundary condition
\begin{align}
    r(t,x) \approx \operatorname{dist}(x,\Gamma_t),
    \label{E:Physical_boundary_condition}
\end{align}
where $\operatorname{dist} (\cdot, \cdot)$ is the distance function.
Hence, by a \emph{solution} we will always mean a pair of functions $(r,v)$ that satisfies equations \eqref{E:r_v_eq}
within $\MovingDom$, and for which \eqref{E:Physical_boundary_condition} holds.

We begin with our uniqueness result:

\begin{theorem}[Uniqueness]
Equations \eqref{E:r_v_eq} admit at most one solution $(r,v)$
in the class
\begin{align}
    v \in C^1_x, \, \nabla r \in \tilde{C}^{\frac{1}{2}}_x.
    \nonumber
\end{align}
\label{T:Uniqueness}
\end{theorem}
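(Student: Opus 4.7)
The plan is to deduce uniqueness from a stronger \emph{stability} statement: I will construct a nonlinear distance functional $D(t)$ between any two solutions $(r_1,v_1)$, $(r_2,v_2)$ in the stated class, and establish a Gr\"onwall-type bound $\frac{d}{dt} D(t) \lesssim D(t)$, from which uniqueness follows immediately by taking identical initial data. Because the solutions live on distinct moving domains $\Omega^1_t, \Omega^2_t$ with distinct vacuum boundaries $\Gamma^1_t, \Gamma^2_t$, the functional must account both for the interior discrepancy and for the mismatch between the free boundaries.

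A natural candidate, mirroring the linearized $\mathcal{H}$-energy introduced above, is
\[
D(t) := \int_{\Omega^1_t \cap \Omega^2_t} r_*^{\frac{1-\kappa}{\kappa}}\bigl(|r_1-r_2|^2 + r_*\,|v_1-v_2|^2\bigr)\, dx \;+\; \int_{\Omega^1_t \triangle \Omega^2_t} r_*^{\frac{1}{\kappa}+1}\, dx,
\]
with $r_* := \max(r_1,r_2)$ and each $r_j$ extended by zero off its own domain. By \eqref{E:Physical_boundary_condition}, the second term is comparable to a weighted measure of the symmetric difference and vanishes to the correct order at each boundary; moreover $D(t)=0$ if and only if $\Omega^1_t=\Omega^2_t$ and $(r_1,v_1)=(r_2,v_2)$.

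To propagate $D$, I would compute $\frac{d}{dt}D$ using the moving-domain identity \eqref{E:Derivative_moving_domain} with either $\Dt^{1}$ or $\Dt^{2}$ on each piece, then substitute from \eqref{E:r_v_eq} applied to each solution. On the common region $\Omega^1_t \cap \Omega^2_t$ this produces difference equations of the schematic form
\begin{align*}
\Dt^{1}(r_1 - r_2) + r_1 \GInvP_1^{ij} \partial_i(v_1 - v_2)_j &= \mathcal E_r,\\
\Dt^{1}(v_1 - v_2)_i + a_2^{(1)} \partial_i(r_1 - r_2) &= \mathcal E_v,
\end{align*}
where $\mathcal E_r,\mathcal E_v$ are lower-order terms involving coefficient differences contracted with $\nabla r_2,\nabla v_2$, controlled pointwise by the Lipschitz norms of $(r_2, v_2)$ times $|r_1-r_2|+|v_1-v_2|$. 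Testing against the weights built into $D$ and exploiting the symmetry of $\GInvP_1^{ij}$, an integration by parts cancels the principal part, leaving an interior contribution bounded by a constant depending only on $\|v_j\|_{C^1}$ and $\|\nabla r_j\|_{\tilde C^{1/2}}$, times $D(t)$, plus boundary fluxes on $\partial(\Omega^1_t \cap \Omega^2_t)$.

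The main obstacle is precisely this boundary-flux bookkeeping: because $\Omega^1_t$ and $\Omega^2_t$ move with distinct normal velocities $v_j^i/v_j^0$, the integrand of the bulk piece does not vanish on $\partial(\Omega^1_t \cap \Omega^2_t)$, and one must extract the correct cancellation with the fluxes arising from the time derivative of the symmetric-difference integral. To do so I would use three structural ingredients: (i) $r_j \approx \operatorname{dist}(\cdot,\Gamma^j_t)$ by \eqref{E:Physical_boundary_condition}, so that $r_*$ vanishes simply on the joint boundary; (ii) the control norm $A$ places both $\nabla r_j$ uniformly close to a common vector $N$, so that the two boundaries form only a small angle locally and level sets of $r_1,r_2$ give a natural Lipschitz identification; (iii) the $\tilde C^{1/2}$ regularity of $\nabla r_j$ lets one approximate $r_2$ by $r_1$ to order $|r_1-r_2|^{3/2}$ near the joint boundary, precisely enough to absorb the dangerous flux terms into $D$. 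Once this cancellation is carried out, Gr\"onwall and $D(0)=0$ yield $D(t)\equiv 0$, which forces $\Omega^1_t=\Omega^2_t$ and $(r_1,v_1)\equiv(r_2,v_2)$, proving uniqueness.
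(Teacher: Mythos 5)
Your overall strategy --- construct a nonlinear distance functional that measures interior discrepancy and boundary mismatch, then propagate it by Gr\"onwall --- is the same high-level plan as the paper's. But you diverge from the paper precisely at the place you yourself flag as ``the main obstacle,'' and that divergence is where your argument breaks down as written.

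The paper does \emph{not} keep a non-degenerate interior integrand plus an explicit symmetric-difference term. Instead, it works with a \emph{degenerate} modified functional $\tilde D_\H$ (see \eqref{Diff}): the same interior integrand, but multiplied by cutoff weights $a(\mu,\nu)$ and $b(\mu,\nu)$, $\mu=r_1+r_2$, $\nu=r_1-r_2$, which are homogeneous, satisfy $\mu a = b$, and are supported in $\{|r_1-r_2|<\tfrac12(r_1+r_2)\}$. Since either $r_1$ or $r_2$ vanishes on $\partial(\Omega^1_t\cap\Omega^2_t)$, the integrand of $\tilde D_\H$ vanishes identically in a full neighborhood of that set. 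Consequently, when one integrates by parts in $\frac{d}{dt}\tilde D_\H$, there are no boundary fluxes at all; the price paid is a collection of interior terms involving $\nabla a$ and $\nabla b$ (the $J_1^a,\dots,J_1^d$ integrals), which are then handled by sharp interpolation estimates and a boundary-layer decomposition (as in the nonrelativistic companion paper). The original non-degenerate $D_\H$ is recovered from $\tilde D_\H$ via the equivalence Lemma~\ref{l:D-equiv}, which requires $A$ small, and the boundary-mismatch information is encoded implicitly through Lemma~4.1 showing $\int_{\Gamma_t}(r_1+r_2)^{1/\kappa+2}\,d\sigma\lesssim D_\H$ rather than as a separate additive term.

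Your version keeps $D$ non-degenerate and tries to make the boundary fluxes of the interior piece cancel against the time derivative of $\int_{\Omega^1\triangle\Omega^2} r_*^{1/\kappa+1}\,dx$. This is the crux, and your three ``structural ingredients'' describe heuristics but do not verify the cancellation; the proposal explicitly defers it (``once this cancellation is carried out\dots''). Moreover, two specific choices you make would generate additional headaches: (i) $r_*=\max(r_1,r_2)$ is not differentiable where $r_1=r_2$, which is awkward when you differentiate the symmetric-difference integral in time, and (ii) you test the difference equations against $\GInvP_1^{ij}$ rather than the averaged $\GInvP_{mid}^{ij}$. The paper's cancellation of the principal divergence term in $I_3$ relies crucially on expanding $\tilde v_1-\tilde v_2$ around the \emph{midpoint} $(v_1+v_2)/2$, so that the quadratic terms in $v_1-v_2$ cancel; using $\GInvP_1$ breaks that symmetry and leaves a quadratic error of the wrong order. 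So the gap is real: your route through the boundary fluxes is genuinely harder than the paper's degenerate-weight route and is not carried out, and even the interior cancellation you invoke is not obtained with the test weights you've chosen.
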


For the next Theorem, we introduce the phase space
\begin{align}
    \mathbf{H}^{2k} := \{ (r,v) \, | \, (r,v) 
    \in \mathcal{H}^{2k} \}.
\end{align}
We refer to Section \ref{S:Function_spaces} for a more
precise definition of $\mathbf{H}^{2k}$, including its topology.
Since the $\mathcal{H}^{2k}$ norms depend on $r$, it is appropriate to think of $\mathbf{H}^{2k}$ in a nonlinear
fashion, as an infinite dimensional manifold.
We also stress that, while
$k$ was an integer in our preliminary discussion in Section \ref{S:Energies_and_control_norms}, in Section
\ref{S:Function_spaces} we extend their definition for 
any $k \geq 0$. Consequently, $\mathbf{H}^{2k}$ is also 
defined for any $k \geq 0$, and our Theorems \ref{T:LWP}
and \ref{T:Continuation} below
include non-integer values of $k$.

\begin{theorem}
Equations \eqref{E:r_v_eq} are locally well-posed
in $\mathbf{H}^{2k}$ for any data $(\mathring{r},\mathring{v}) \in \mathbf{H}^{2k}$ with $\mathring{r}$ satisfying 
\eqref{E:Physical_boundary_condition}, provided that
\begin{align}
    2k > 2 k_0 + 1,
    \label{E:LWP_threshold}
\end{align}
where $k_0$ is given by \eqref{E:Critical_exponent}. 
\label{T:LWP}
\end{theorem}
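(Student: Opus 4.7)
The plan is to follow the four-step framework for free-boundary quasilinear hyperbolic problems, adapted from the nonrelativistic companion paper \cite{IT-norel}: sharp energy estimates, construction of regular solutions, uniqueness, and rough solutions via a nonlinear Littlewood--Paley approximation scheme. The good diagonal structure of \eqref{E:r_v_eq} with respect to $\Dt$, together with the divergence form of $\GInvP^{ij}\partial_i v_j$ and the bookkeeping of Section \ref{S:Bookkeeping}, are the ingredients that make each step go through with weights degenerating at the boundary.

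First, at integer levels $k$ I would establish \emph{a priori} energy estimates of the form
\begin{equation}
\frac{d}{dt}\|(r,v)\|_{\mathcal{H}^{2k}}^2 \lesssim_A B\, \|(r,v)\|_{\mathcal{H}^{2k}}^2 ,
\nonumber
\end{equation}
obtained by differentiating \eqref{E:r_v_eq} with weighted vector fields of the form $r^a \partial^\alpha$ whose weights are calibrated to the scaling orders of Section \ref{S:Bookkeeping}. The wave-like structure $\Dt^2 - c_s^2 \GInvP^{ij}\partial_i\partial_j$ hidden in \eqref{E:r_v_eq} produces a coercive quadratic form matching the $\mathcal{H}^{2k}$ definition, while the vorticity equation \eqref{E:Transport_vorticity} controls the transport part; commutators between $\Dt$ and the weighted derivatives, together with the moving-domain formula \eqref{E:Derivative_moving_domain}, produce only terms that can be absorbed using $B$.

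Second, for smooth data I would construct regular solutions by a time-discretization scheme built on the good transport structure associated with $\Dt$: at each step one updates the boundary by the flow of $u^i/u^0$ on a frozen domain and solves a linear transport-elliptic problem for the updated $(r,v)$, closing the iteration by the energy estimates just described. Uniqueness at this level, and more generally at the $(v,\nabla r)\in C^1_x\times \tilde C^{1/2}_x$ level needed later, is already provided by Theorem \ref{T:Uniqueness} via the stability functional alluded to in item (iii) of the abstract.

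Third, to descend to the sharp threshold $2k > 2k_0 + 1$ I would regularize the data $(\mathring r,\mathring v)\in \mathbf{H}^{2k}$ through the nonlinear Littlewood--Paley decomposition of \cite{IT-norel}, producing a family $(\mathring r^{(n)},\mathring v^{(n)})$ of smooth data with uniformly bounded $\mathbf{H}^{2k}$ norm and whose dyadic differences are controlled. Step two yields smooth solutions $(r^{(n)}, v^{(n)})$ on a common time interval $[0,T]$ determined by the initial control norms; the energy estimates of step one give uniform $\mathbf{H}^{2k}$ bounds, while the stability functional, applied to successive regularizations, shows that $(r^{(n)},v^{(n)})$ is Cauchy in a weaker norm. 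Interpolation upgrades this to convergence in $\mathbf{H}^{2k}$, producing the desired solution and simultaneously its continuous dependence on the data.

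The main obstacle is the rough-solutions step: the space $\mathbf{H}^{2k}$ depends nonlinearly on $r$, both through the moving domain $\MovingDomT$ and through the degenerate weights $r^{(1-\kappa)/(2\kappa)+a}$, so the regularization must produce domains and weights consistent across $n$, and the difference estimates must simultaneously control $r^{(n)}-r^{(m)}$, $v^{(n)}-v^{(m)}$, and the boundary discrepancy. Reconciling the nonlinear Littlewood--Paley machinery with the relativistic weights built into $\mathcal H^{2k}$, and ensuring that the energy estimates of step one respect these weights uniformly as $r\to 0$, is exactly where the scaling bookkeeping of Section \ref{S:Bookkeeping} and the balance of $r$-powers observed after \eqref{E:r_v_eq} earn their keep.
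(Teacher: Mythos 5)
Your outline correctly identifies the four main ingredients the paper uses (energy estimates, regular solutions by time discretization, difference/stability estimates, and a nonlinear Littlewood--Paley regularization of the data), and you correctly understand that the nonlinear dependence of the space $\mathbf{H}^{2k}$ on $r$ is the central difficulty. However, there is a genuine gap in your final step.

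You claim that ``Interpolation upgrades this to convergence in $\mathbf{H}^{2k}$, producing the desired solution and simultaneously its continuous dependence on the data.'' This is not correct, and the paper does not proceed this way. Interpolating between the weak (roughly $\mathcal{H}$-level) difference bounds coming from the stability functional and the uniform $\mathcal{H}^{2k}$ bounds from the energy estimates only gives convergence in $\mathbf{H}^{2k'}$ for $k' < k$, strictly below the target regularity; it does not reach the endpoint $\mathbf{H}^{2k}$. The same obstruction appears whenever one constructs rough solutions for quasilinear equations by approximation. To obtain strong $\mathbf{H}^{2k}$ convergence (and continuous dependence in the $\mathbf{H}^{2k}$ topology) the paper uses frequency envelopes: one encodes the dyadic distribution of the initial data's $\mathbf{H}^{2k}$ norm into a slowly varying envelope $c_h$, propagates it through the regularized solutions via the energy estimates, and uses the high-regularity bound \eqref{app-high} together with the low-frequency difference bound \eqref{app-diff} to show that no portion of the norm escapes to high frequencies in the limit. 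This is precisely the step your proposal omits, and without it the construction does not close at the endpoint regularity.

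A secondary, smaller issue: you assert that the smooth solutions exist on ``a common time interval $[0,T]$ determined by the initial control norms'' as if this were automatic. In fact the lifespan of each regularized solution a priori depends on the regularization scale, and establishing an $h$-independent lifespan requires a bootstrap argument on the control norm $B(t)$, fed back into the energy estimates. The paper makes this step explicit; it should not be glossed over.
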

Local well-posedness in Theorem \ref{T:LWP} is understood
in the usual quasilinear fashion, namely:
\begin{itemize}
    \item Existence of solutions $(r,v) \in C([0,T],\mathbf{H}^{2k})$.
    \item Uniqueness of solutions in a larger class,
    see Theorem \ref{T:Uniqueness}.
    \item Continous dependence of solutions on the initial 
    data in the $\mathbf{H}^{2k}$ topology.
\end{itemize}
Furthermore, in our proof of uniqueness in Section 
\ref{S:Uniqueness} we establish something stronger, namely,
that a suitable nonlinear distance between two solutions
is propagated under the flow. This distance functional, in particular, tracks the distance between the boundaries of the moving domains associated with different solutions. Thus,
our local well-posedness also includes:
\begin{itemize}
    \item Weak Lipschitz dependence on the initial data
    relative to a suitable nonlinear functional introduced
    in Section \ref{S:Uniqueness}.
\end{itemize}

An important threshold for our results corresponds to the 
uniform control parameters $A$ and $B$. 
Of these $A$ is at scaling,
while $B$ is one half of a derivative above scaling. Thus, by 
Lemma~\ref{l:morrey} of Section \ref{S:Function_spaces},
we will have the bounds
\begin{equation}\label{sobolev-A}
A \lesssim \| (r,v)\|_{\bfH^{2k}}, \qquad k > k_0 = \frac{d+1}2 +\frac{1}{2\kappa},    
\nonumber
\end{equation}
and
\begin{equation}\label{sobolev-B}
\qquad B \lesssim \| (r,v)\|_{\bfH^{2k}}, \qquad k > k_0+\frac12 = \frac{d+2}2 +\frac{1}{2\kappa}.
\nonumber
\end{equation}

\bigskip

Next, we turn our attention to the continuation of solutions.
\begin{theorem}
For each integer $k\geq 0$ there exists an energy
functional $E^{2k} = E^{2k}(r,v)$ with the following properties:

\medskip

\noindent a) Coercivity: as long as $A$ remains bounded,
we have
\begin{align}
    E^{2k}(r,v) \approx \norm{(r,v)}^2_{\mathcal{H}^{2k}}.
    \nonumber
\end{align}
\noindent b) Energy estimates hold for solutions 
to \eqref{E:r_v_eq}, i.e.
\begin{align}
    \frac{d}{dt} E^{2k}(r,v) \lesssim_A B \norm{(r,v)}^2_{\mathcal{H}^{2k}}.
    \nonumber
\end{align}
\label{T:Energy_estimates}
\end{theorem}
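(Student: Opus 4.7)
The plan is to construct $E^{2k}$ as a sum of linearized-type energies applied to a family of \emph{good derivatives} of $(r,v)$, namely
$$E^{2k}(r,v) = \sum_{\alpha}\|(s_\alpha, w_\alpha)\|_{\mathcal H}^2,$$
where each $(s_\alpha,w_\alpha)$ is obtained by applying to $(r,v)$ a differential operator of total order $2k$ (in the bookkeeping sense of Section \ref{S:Bookkeeping}) built from $D_t$ and the transition operators of Section \ref{S:Transition_operators}, which are in turn essentially powers of the degenerate wave operator $D_t^2 - r\Delta$ coming from \eqref{E:r_v_eq}. The family is chosen so that each factor of $r\Delta$ trades one derivative for one weight of $r$, matching exactly the admissible exponents in the definition of $\mathcal H^{2k}$. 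Coercivity (a) then follows from weighted elliptic regularity for the degenerate divergence-form operator $\partial_i(rG^{ij}\partial_j\cdot)$ together with Hardy-type inequalities near $\Gamma_t$, available from the function space framework of Section \ref{S:Function_spaces}, using only that the $A$-bound yields $r(t,x)\approx \operatorname{dist}(x,\Gamma_t)$ uniformly in time.

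For the estimate (b), I differentiate \eqref{E:r_v_eq} and commute each good derivative through $D_t$ and through the divergence-type operator $rG^{ij}\partial_i(\cdot)_j$. This yields, for each $\alpha$, an inhomogeneous linearized system for $(s_\alpha,w_\alpha)$ whose source terms consist of commutators. Pairing this system with $(s_\alpha,w_\alpha)$ in $\mathcal H$, using the moving-domain formula \eqref{E:Derivative_moving_domain}, the symmetric structure of the linearized problem, and integration by parts, produces
$$\frac{d}{dt}\|(s_\alpha,w_\alpha)\|_{\mathcal H}^2 \;\lesssim_A\; B\,\|(r,v)\|_{\mathcal H^{2k}}^2,$$
and summing over $\alpha$ gives the theorem. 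The lower order term $ra_1 v^i\partial_i r$ in \eqref{E:r_eq} carries an extra factor of $r$ and hence is of order $-1$ in the bookkeeping of Section \ref{S:Bookkeeping}, so its contribution is strictly lower order and absorbable at every level via an $A$-coefficient.

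The main obstacle is the commutator bookkeeping: to obtain a bound linear (not quadratic) in $B$, every top-order commutator term must contain \emph{exactly one} top-derivative factor from the solution, and that distinguished factor must be precisely the one controlled in an $L^\infty$-type norm entering $B$. This requires careful term-by-term analysis of the schematic commutators $[\,\cdot\,,D_t]$ and $[\,\cdot\,,rG^{ij}\partial_j]$, using the $\tilde C^{\frac12}$-structure of $B$ — which permits one derivative of $r$ with weight $r^{\frac12}$ near the boundary — to compensate for the degeneracy of $r\Delta$ at $\Gamma_t$, and using the $A$-control together with interpolation between adjacent weighted spaces to absorb strictly lower order contributions. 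Once the estimate is established for integer $k$, it extends to all $k\ge 0$ by interpolation between adjacent integer energies, consistently with the definition of $\mathcal H^{2k}$ for non-integer $k$ in Section \ref{S:Function_spaces}.
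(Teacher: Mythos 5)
Your overall architecture is right — build $E^{2k}$ as a sum of $\H$-norms of higher-order quantities that solve approximately the linearized system, then recycle the linearized energy estimate and use the transition operators for coercivity — but two genuine pieces of the paper's construction are missing, and without them the argument does not close.

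First, the energy functional in the paper is \emph{not} purely wave-type: it has two components, $E^{2k} = E^{2k}_{\text{wave}} + E^{2k}_{\text{transport}}$, where the transport piece is the weighted norm $\|\Vort\|^2_{H^{2k-1,k+\frac1\kappa}}$ of the vorticity. This is not optional. The operator $\tilde L_2$ arising from the linearized divergence structure is \emph{not} elliptic by itself; coercivity in the $w$ component only holds for $\tilde L_2 + \tilde L_3$, where $\tilde L_3$ acts on the curl of $w$. Concretely, the paper's corollary to Lemma~\ref{L:Elliptic_estimates} gives $\norm{w}_{H^{2,\frac{1}{2\kappa}+1}} \lesssim \norm{\tilde L_2 w}_{H^{0,\frac{1}{2\kappa}}} + \norm{\curl w}_{H^{1,\frac{1}{2\kappa}+1}} + \norm{w}_{L^2(r^{1/\kappa})}$. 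So the curl term must be supplied from somewhere, and in this problem it is supplied by propagating the vorticity via the transport equation \eqref{E:Transport_vorticity} and including it in the energy. Your proposed $E^{2k}$, consisting only of $\H$-norms of "good derivatives built from $D_t$ and the transition operators," would control only the gradient part of $v$ and cannot be equivalent to $\|(r,v)\|^2_{\H^{2k}}$.

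Second, you take the good derivatives to be built from $D_t$ and the transition operators applied directly, i.e.\ essentially $(D_t^{2k}r, D_t^{2k}v)$ up to order-matching weights. But as the paper notes explicitly, $(D_t^{2k}r, D_t^{2k}v)$ are \emph{not} good approximate solutions of the linearized equations: the source terms they generate contain endpoint terms with a single factor of top order $2k+1$, which cannot be absorbed as $B \cdot \|(r,v)\|_{\H^{2k}}$. The paper cures this by adding Alinhac-type corrections, e.g.\ $s_k := D_t^k r - \frac{a_0}{\kappa\br}\GInvP^{ij}\partial_i r \, D_t^{k-1}v_j$ for $k\ge 3$ (with a different, quadratic-in-$\nabla r$ correction for $s_2$). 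It is precisely this correction that produces the commutator cancellation in $g_{2k}$ in \eqref{E:Source_w_eq-red}, where $D_t^{2k-1}[\partial_i,D_t]r$ pairs against $-\frac{a_0}{\kappa\br}\GInvP^{jl}\partial_j r\,\partial_i D_t^{2k-1}v_l$ to form a genuine commutator with no $2k+1$-derivative factor. Your "careful term-by-term analysis" is exactly where this issue lives, and without specifying the corrected good variables it would fail: the uncorrected $D_t^{2k}$-energy does not satisfy the claimed linear-in-$B$ estimate, only a quadratic one. Finally, a smaller point: the theorem is stated and proved for integer $k$ only (the non-integer case is a separate paradifferential argument for the norms, not an interpolation of the energies), so the last sentence about extending $E^{2k}$ by interpolation is not how the paper proceeds.
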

By Gronwall's inequality, Theorem~\ref{T:Energy_estimates} readily implies
\begin{align}
    \norm{(r,v)(t)}^2_{\mathcal{H}^{2k}} 
    \lesssim e^{\int_0^t C(A) B(\tau) \, d\tau}
    \norm{(\rz,\vz)}_{\mathcal{H}^{2k}}^2,
    \label{E:Energy_bound_from_theorem}
\end{align}
where $C(A)$ is a constant depending on $A$.
The energies $E^{2k}$ will be constructed explicitly only
for integer $k$. Nevertheless, our analysis will show
that \eqref{E:Energy_bound_from_theorem} will also hold
for any $k>0$. This will be done using a mechanism
akin to a paradifferential expansion, without explicitly
constructing energy functionals for non-integer $k$. 
As a consequence, we will obtain:
\begin{theorem}
Let $k$ be as in \eqref{E:LWP_threshold}. Then, the $\mathbf{H}^{2k}$ solution given by Theorem \ref{T:LWP}
can be continued as long as $A$ remains bounded and
$B \in L^1_t(\MovingDom)$.
\label{T:Continuation}
\end{theorem}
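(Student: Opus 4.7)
The proof will proceed by a standard continuation-by-contradiction argument, with the main work consisting in upgrading the integer-$k$ energy estimates of Theorem~\ref{T:Energy_estimates} to the fractional $\mathbf{H}^{2k}$ regularity allowed in Theorem~\ref{T:LWP}. Fix $k$ satisfying \eqref{E:LWP_threshold}, let $(r,v)$ be the $\mathbf{H}^{2k}$ solution provided by Theorem~\ref{T:LWP}, and let $T^{*}>0$ be its maximal existence time. Assume for contradiction that $T^{*}<\infty$ but $A\in L^{\infty}_{t}([0,T^{*}))$ and $B\in L^{1}_{t}([0,T^{*}))$. The objective is to deduce a uniform bound on $\|(r,v)(t)\|_{\mathbf{H}^{2k}}$ as $t\uparrow T^{*}$, which, together with the physical vacuum condition~\eqref{E:Physical_boundary_condition}, yields a limit state $(r,v)(T^{*})\in\mathbf{H}^{2k}$ from which Theorem~\ref{T:LWP} produces an extension past $T^{*}$, contradicting maximality. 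Uniqueness (Theorem~\ref{T:Uniqueness}) guarantees that the extended solution agrees with the original on $[0,T^{*})$.

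For integer $k$ the required bound is immediate: Theorem~\ref{T:Energy_estimates}(a) gives coercivity $E^{2k}(r,v)\approx\|(r,v)\|_{\mathcal{H}^{2k}}^{2}$ and part~(b) combined with Gronwall's inequality yields \eqref{E:Energy_bound_from_theorem}, so $A\in L^{\infty}_{t}$ and $B\in L^{1}_{t}$ directly control $\|(r,v)\|_{\mathcal{H}^{2k}}$ on $[0,T^{*})$. Thus the only real task is the non-integer case, for which the paper explicitly announces a paradifferential mechanism rather than an explicit functional. The plan is to introduce a family of $r$-adapted Littlewood--Paley truncations $\{P_{\leq\lambda}\}$ whose scale is dictated by the degenerate operator $r\Delta$ (so that a frequency $\lambda$ corresponds to spatial scale $\lambda^{-2}$ near the boundary and $\lambda^{-1}$ in the bulk, in line with the scaling of Section~\ref{S:Bookkeeping}), to apply each $P_{\leq\lambda}$ to the system~\eqref{E:r_v_eq}, and to bound the resulting commutators with $\Dt$ and with the divergence operator $\GInvP^{ij}\partial_{i}$ in terms of $A$ and $B$. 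Summing the square of the integer-type energies $E^{2k_1}(P_{\lambda}(r,v))$ against $\lambda^{2(k-k_1)}$ weights, for $k_1$ chosen to bracket $k$, will produce a Littlewood--Paley square-function quantity equivalent to $\|(r,v)\|_{\mathcal{H}^{2k}}^{2}$ whose time derivative is bounded by $C(A)B\|(r,v)\|_{\mathcal{H}^{2k}}^{2}$; Gronwall then closes the argument.

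The hard part will be twofold. First, setting up the paradifferential/Littlewood--Paley decomposition compatible with the weighted spaces $\mathcal{H}^{2k}$: the underlying geometry is that of the degenerate metric associated with $r\Delta$, and one needs kernel bounds, almost-orthogonality, and commutator estimates in which the symbol of the truncation interacts correctly with the $r$-weights and with the coefficients $a_{0},a_{1},a_{2},\GInvP^{ij}$, each of which is only $\tilde{C}^{1/2}$ at the level of $B$ and $L^{\infty}$ at the level of $A$. This is exactly the regime in which $B$ controls one commutator derivative, reflecting the half-derivative-above-scaling nature of $B$, while $A$ controls the purely multiplicative constants; a careful bookkeeping along the lines of Section~\ref{S:Bookkeeping} is needed to verify that every commutator produces exactly one $B$ factor and no derivative loss. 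Second, propagating the boundary structure: one must show that $r$ remains comparable to $\operatorname{dist}(\cdot,\Gamma_t)$ and that the reference field $N$ of Section~\ref{S:Energies_and_control_norms} persists up to $T^{*}$ under mere control of $A$, so that the very definition of $\mathbf{H}^{2k}$ and of the truncations $P_{\leq\lambda}$ is stable along the flow.

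With the fractional energy inequality and the stable boundary geometry in hand, $A\in L^{\infty}_{t}$ and $B\in L^{1}_{t}$ yield $\|(r,v)\|_{\mathbf{H}^{2k}}\in L^{\infty}_{t}([0,T^{*}))$, and the continuous-in-time statement $(r,v)\in C([0,T^{*});\mathbf{H}^{2k})$ from Theorem~\ref{T:LWP} extends to $t=T^{*}$ by weak compactness plus the continuous dependence part of Theorem~\ref{T:LWP}. Restarting the flow at $T^{*}$ with this data contradicts the maximality of $T^{*}$ and completes the proof.
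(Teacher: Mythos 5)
Your overall skeleton --- an a priori bound on $\|(r,v)\|_{\mathbf{H}^{2k}}$ from $A\in L^\infty_t$ and $B\in L^1_t$, followed by a restart via Theorem~\ref{T:LWP} and uniqueness --- is the right frame, and the integer-$k$ case does follow from Theorem~\ref{T:Energy_estimates} plus Gronwall exactly as you say. The gap is in the fractional case, which is the real content here since the threshold \eqref{E:LWP_threshold} is generically non-integer. You propose to build an $r$-adapted Littlewood--Paley calculus, apply the truncations directly to \eqref{E:r_v_eq}, and close via commutator estimates costing exactly one factor of $B$; but none of that calculus is constructed. You would need kernel bounds and almost-orthogonality for projections adapted to the degenerate geometry of $r\Delta$ on a moving Lipschitz domain, commutators of these (necessarily time-dependent) projections with $\Dt$, and commutators with coefficients controlled only in $L^\infty$ by $A$ and at the $\tilde C^{1/2}$/Lipschitz level by $B$. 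Near the boundary the uncertainty principle caps the usable frequency at $2^{2h}$ where $r\lesssim 2^{-2h}$ (this is precisely the issue discussed around Proposition~\ref{p:reg-state}), so the pieces are not classical Littlewood--Paley blocks, and the assertion that every commutator yields exactly one $B$ with no derivative loss is the hard claim, not a bookkeeping verification. As written, this step is a program rather than a proof.

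The paper reaches the same conclusion without any such calculus: it regularizes the data at dyadic scales $2^h$ via Proposition~\ref{p:reg-state}, evolves each regularized datum, controls the higher integer norms $\mathcal{H}^{2k+2j}$ of those solutions with Theorem~\ref{T:Energy_estimates} (plus a bootstrap on the time-dependent $B$ to get an $h$-independent lifespan), controls consecutive differences in the weak $\mathcal{H}$ topology with the stability bound of Theorem~\ref{t:Diff}, and sums with frequency envelopes as in Section~\ref{S:Rough_solutions_and_continuation}; this is the announced ``mechanism akin to a paradifferential expansion'' that upgrades \eqref{E:Energy_bound_from_theorem} to non-integer $k$ and yields the continuation criterion. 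If you want to keep your route, the workable fix is to replace the direct frequency decomposition of the solution by this decomposition into a family of solutions with regularized data, all of whose ingredients are already proved. Separately, boundedness of $A$ alone does not obviously propagate the nondegeneracy $r\approx\operatorname{dist}(\cdot,\Gamma_t)$; it is the $L^1_t$ control of $B$ (hence of $\nabla v$ and $\nabla r$ along the flow) that keeps $\Gamma_t$ Lipschitz and $r$ a nondegenerate defining function up to $T^{*}$, and this needs to be said explicitly before restarting at $T^{*}$.
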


\subsection{Historical comments\label{S:Historical}} 
The study of the relativistic Euler equations
goes back to the early days of relativity theory,
with the works of Einstein \cite{Einstein:1914bx}
and Schwarzschild \cite{Schwarzschild-1916}. 
The relativistic free-boundary\footnote{In order
to provide some context, we briefly discuss the general
relativistic free-boundary Euler equations, i.e., including
both the cases of a gas and a liquid. We do not, however,
make an overview of related works that treat
the non-relativistic free-boundary Euler equations. 
See \cite{IT-norel} and references therein for such
a discussion.}
Euler equations were introduced in the '30s in the classical 
works of Tolman, Oppenheimer, and Volkoff
\cites{Tolman:1934za,Tolman:1939jz,Oppenheimer:1939ne},
where they derived the now-called TOV equations.
With the goal of modeling a star in the
framework of relativity, 
Tolman, Oppenheimer, and Volkoff studied spherically symmetric static
solutions to the Einstein-Euler system for
a fluid body in vacuum and identified
the vanishing of the pressure as the correct
physical condition on the boundary.
Observe that such a condition covers
both the cases of a liquid, where $\varrho >0$ on the boundary,
as well as a gas, which we study here, where $\varrho=0$ on the
boundary. This distinction is related to the choice
of equation of state.

Although the TOV equations have a long
history and the study of relativistic stars
is an active and important field of research
(see, e.g., \cite{RezzollaZanottiBookRelHydro}*{Part III}
and \cite{Misner:1974qy}*{Part V}), the mathematical
theory of the relativistic free-boundary Euler equations
lagged behind. 

If we restrict ourselves to spherically-symmetric
solutions, possibly also considering coupling to Einstein's
equations, a few precise and satisfactory mathematical
statements can be obtained.
Lindblom \cite{Lindblom-1988}
proved that a 
static, asymptotically flat spacetime, that contains  only a uniform-density perfect fluid confined to a spatially compact
region ought to be spherically symmetric, thus
generalizing to relativity a classical result 
of Carleman \cite{Carleman-1019} and Lichtenstein \cite{Lichtenstein-1918} for Newtonian fluids.
The proof of existence of spherically symmetric
static solutions to the Einstein-Euler system
consisting of a fluid region and possibly a vacuum region
was obtained by Rendall and Schmidt \cite{Rendall-Schmidt-1991}. Their solutions
allow for the vanishing of the density along
the interface of the fluid-vacuum region, although 
it is also possible that the fluid occupies the entire space
and the density merely approaches zero at infinity.
Makino \cite{Makino-1998} refined this result by 
providing a general criterion for the
equation of state which ensures that the model has finite radius.
Makino has also obtained solutions to the Einstein-Euler equations
in spherical symmetry with a vacuum boundary and near equilibrium
in \cites{Makino-2016,Makino-2017}, where equilibrium here
corresponds to the states given by the TOV equations. In
\cite{Makino-2018}, Makino
extended these results to axisymmetric
solutions that are slowly rotating, i.e., 
when the speed of light is sufficiently large or when the gravitational field is sufficiently weak (see also
the follow-up works \cites{Makino-2019-arxiv,Makino-2019-2-arxiv}
and the preceding work in \cite{Heilig-1995}).
Another result within symmetry class related to the existence
of vacuum regions and relevant for the mathematical
study of star evolution is Had\v{z}i\'{c} and Lin's
recent proof of the ``turning point principle"
for relativistic stars 
\cite{Hadzic-Lin-2021}.

The discussion of the last paragraph was not intended to be 
an exhaustive account of the study of the relativistic
free-boundary Euler equations under symmetry assumptions,
and we refer the reader to the above references 
for further discussion.
Rather, the goal was to highlight that a fair
amount of results can be obtained in symmetry classes.
This is essentially because some of the most challenging aspects
of the problem are absent or significantly simplified
when symmetry is assumed. This should be contrasted with
what is currently known in the general case, 
which we now discuss.

Local existence and uniqueness of solutions the relativistic
Euler equations in Minkowski background
with a compactly supported density have been
obtained by Makino and Ukai \cites{Makino-Ukai-1995, Makino-Ukai-1995-II} and LeFloch and Ukai
\cite{LeFloch-Ukai-2009}. These solutions, however,
require some strong regularity of the fluid variables
near the free boundary and, in particular, do not allow
for the existence of physical vacuum states.
Similarly, Rendall \cite{Rendall-1992}
established a local existence and uniqueness\footnote{More precisely,
only a type of partial uniqueness has been obtained, see
the discussion in \cite{Rendall-1992}.}
result for the Einstein-Euler
system where the density is allowed to vanish. Nevertheless,
as the author himself pointed out, the solutions
obtained are not allowed to accelerate on the free boundary and,
in particular, do not include the physical vacuum case.
Rendall's result has been improved by Brauer and Karp
\cites{Brauer-Karp-2011, Brauer-Karp-2014}, but still
without allowing for a physical vacuum boundary.
Oliynyk \cite{Oliynyk-2012} 
was able to construct solutions 
that can accelerate on the boundary, but 
his result is valid only in one spatial dimension.
A new approach to investigate the free-boundary Euler
equations, based on a frame formalism,
has been proposed by Friedrich in 
\cite{Friedrich-1998} (see also \cite{Friedrich-Rendall-2000})
and further investigated by the first author in
\cite{DisconziRemarksEinsteinEuler}, but it has
not led to a local well-posedness theory.

In the case of a \emph{liquid}, i.e., where the fluid
has a free-boundary where the pressure vanishes but the density
remains strictly positive, a-priori estimates have
been obtained by Ginsberg \cite{Ginsberg-2019} and
Oliynyk \cite{Oliynyk-2017}. Local existence
of solutions was recently established by Oliynyk
\cite{Oliynyk-2019-arxiv} 
whereas Miao, Shahshahani, and Wu 
\cite{Miao-Shahshahani-Wu-2020-arxiv}
proved local existence and uniqueness for the case when
the fluid is in the so-called hard phase, i.e., when 
the speed of sound equals to one.
See also \cite{Trakhinin-2009}, where the author,
after providing a proof of local existence for the
non-isentropic compressible free-boundary Euler equations
in the case of a liquid, discusses ideas to adapt his
proof for the relativistic case.

Finally, for the case treated in this paper, i.e.,
the relativistic Euler equations with a 
\emph{physical vacuum} boundary, the only results
we are aware of are the a-priori estimates
by Had\v{z}i\'{c}, Shkoller, and Speck
\cite{Hadzic-Shkoller-Speck-2019} and 
Jang, LeFloch, and Masmoudi \cite{Jang-LeFloch-Masmoudi-2016}.
In particular, no local existence and uniqueness 
(let along a complete local well-posedness theory 
as we present here) had been previously established.

\subsection{Outline of the paper} 
Our approach
carefully considers the dual
role of $r$, on the one hand, 
as a dynamical variable in the evolution and,
on the other hand, as a defining function
of the domain that, in particular, plays the
role of a weight in our energies.
An important aspect of our approach
is to decouple these two roles.
Such decoupling is what allows
us to work entirely in Eulerian coordinates.
When comparing different solutions 
(which
in general will be defined in different domains),
we can think of the role of $r$ as a defining 
function as leading to a measure of
the distance between the two domains (i.e.,
a distance between the two boundaries),
whereas the role of $r$ as a dynamical variable
leads to a comparison in the common region
defined by the intersection of the two domains.
For instance, in our regularization procedure
for the construction of regular
solutions, the defining functions of the domains are regularized at a different scale than the main dynamical variables.

Although the relativistic and non-relativistic Euler equations,
and their corresponding physical vacuum dynamics, are very 
different, some of our arguments here will closely
follow those in the last two authors' prior work \cite{IT-norel},
where results similar to those of Section \ref{S:Results}
were established for the non-relativistic Euler equations
in physical vacuum. Thus, when it is appropriate, 
we will provide a brief proof, or quote directly from
\cite{IT-norel}. This is particularly the case for Sections
\ref{S:Regular_solutions} and \ref{S:Rough_solutions_and_continuation}.

The paper is organized as follows:

\subsubsection{Function spaces, Section \ref{S:Function_spaces}}
This section presents the functional framework needed
to study equations \eqref{E:r_v_eq}. These are spaces naturally associated with the degenerate wave operator
\begin{align}
    \Dt^2 - r \GInvP^{ij} \partial_i \partial_j
    \nonumber
\end{align}
that is key to our analysis. Similar scales of spaces
have been introduced in  \cite{IT-norel}
treating the non-relativistic case and also in
\cite{Jang-2015} where the non-relativistic problem
had been considered in Lagrangian coordinates and in 
high regularity
spaces.

Our function spaces $\mathcal{H}^{2k}$ are Sobolev-type spaces with weights $r$.
Since the fluid domain is determined by $\Omega_t:= \left\{ r>0\right\}$, the
state space $\mathbf{H}^{2k}$ is nonlinear, having a structure akin to an infinite dimensional manifold.

Interpolation plays two key roles in our work. Firstly,
it allows us to define $\mathcal{H}^{2k}$ for non-integer
$k$ without requiring us to establish direct energy estimates
with fractional derivatives. This is in particular important
for our low regularity setting since the critical exponent
\eqref{E:Critical_exponent} will in general not be an integer.
Secondly, we interpolate between $\mathcal{H}^{2k}$ and 
the control norms $A$ and $B$. For this we use some
sharp interpolation inequalities presented in Section 
\ref{S:Embedding_and_interpolation}. These inequalities
are proven in the last two authors' prior work \cite{IT-norel} and,
to the best of our knowledge, have not appeared in the
literature before. In fact, it is the use of these inequalities
that allows us to work at low regularity, to obtain
sharp energy estimates, and a continuation criterion
at the level of scaling.

\subsubsection{The linearized equation and the
corresponding transition operators, Section \ref{S:Linearized}}
The linearized equation and its analysis form the foundation
of our work, rather than direct nonlinear energy estimates.
Besides allowing us to prove nonlinear energy estimates
for single solutions, basing our analysis on the
linearized equation will also allow us to get good
quantitative estimates for the difference of two solutions.
The latter is important for our uniqueness result and for the 
construction of rough solutions as limits of smooth solutions.
We observe that there are no boundary conditions that 
need to be imposed on the linearized variables. This is related
to the aforementioned decoupling of the roles of $r$ and signals
a good choice of functional framework.

Using the linearized equation we obtain transition operators
$L_1$ and $L_2$ that act at the level of the linearized variables
$s$ and $w$. These transition operators are roughly the leading 
elliptic part of the wave equations for $s$ and the divergence
part of $w$. Note that since the wave evolution for the fluid
degenerates on the boundary due to the vanishing of the sound speed,
so do the transition operators $L_1$ and $L_2$. We refer to $L_1$
and $L_2$ as transition operators because they relate the spaces
$\mathcal{H}^{2k+2}$ and $\mathcal{H}^{2k}$ in a coercive, invertible manner. Because of that, these operators play an important
role in our regularization scheme used to construct 
high-regularity solutions.

\subsubsection{Difference estimates and uniqueness, Section \ref{S:Uniqueness}} In this section we construct 
a nonlinear functional that allows us to measure the distance
between two solutions. 
We show that bounds for  this functional are propagated 
by the flow, which in particular implies 
uniqueness.
A fundamental difficulty is that, since
we are working in Eulerian coordinates, different solutions
are defined in different domains. This difficulty is 
reflected in the nonlinear character of our functional,
which could be thought of as measuring the distance between the boundaries
of two different solutions. The low regularity
at which we aim to establish uniqueness
leads to some technical complications
that are dealt with by a careful analysis
of the problem.

\subsubsection{Energy estimates and coercivity, Section
\ref{S:Energy_estimates}} The energies that we use contain
two components, a wave component and a transport component, in 
accordance with the wave-transport character of the system.
The energy is constructed after identifying Alinhac-type
``good variables" that can be traced back to the 
structure of the linearized problem. This connection
with the linearized problem is also key to establish 
the coercivity of the energy in that it relies on the
transition operators $L_1$ and $L_2$ mentioned above.

\subsubsection{Existence of regular solutions, Section \ref{S:Regular_solutions}}
This section establishes the existence of regular solutions. It heavily relies 
on  the last two authors' prior work \cite{IT-norel}, to which 
the reader is referred for several technical points.

Our construction is based essentially on a 
Newton scheme to produce good
approximating solutions. Nevertheless, a direct
implementation of Newton's method loses derivatives.
We overcome this by preceding each iteration
with a regularization at an appropriate scale
and a separate transport step. The main difficulty
is to control the growth of the energies at 
each step.

\subsubsection{Rough solutions as limits of 
regular solutions, Section \ref{S:Rough_solutions_and_continuation}}
In this section we construct rough
solutions as limits of smooth solutions, in particular establishing the existence
part of Theorem \ref{T:LWP}.
We construct a family of dyadic regularizations
of the data, and control the corresponding
solutions in higher $\H^{2k}$ norms with our energy 
estimates, and the difference of solutions
in $\mathcal{H}$ with our nonlinear stability bounds. 
The latter allow us to establish the convergence of the
smooth solutions to the desired rough solution in weaker topologies.
Convergence in $\bfH^{2k}$ 
is obtained with more accurate control
using frequency envelopes. 
A similar argument then also gives continuous dependence on the data.

\subsection{Notation for $v,\Vort$ and the use of Latin indices}
In view of equations \eqref{E:r_v_eq} and the corresponding
vorticity evolution \eqref{E:Transport_vorticity}, we have  now written 
the dynamics solely in terms of $r$ and the spatial components of $v$, i.e., $v^i$.
We henceforth consider $v$ as a $d$-dimensional vector field, so that whenever referring
to $v$ we always mean $(v^1,\dots,v^d)$. $v^0$ is always understood as a
shorthand for the RHS of \eqref{E:v_0_constraint}. Similarly, by $\Vort$ will stand for $\Vort_{ij}$.

Recalling that indices are raised and lowered with the Minkowski metric and that $\Min_{0i}= 0 = \Min^{0i}$,
$\Min_{ij} = \updelta_{ij}$,
we see that tensors containing only Latin indices have indices equivalently raised and lowered with the 
Euclidean metric. 

\subsection{Acknowledgements} The first author was partially supported by NSF grant DMS-2107701, a Sloan Research Fellowship
provided by the Alfred P. Sloan foundation, a Discovery grant administered by Vanderbilt University, and a
Dean’s Faculty Fellowship. The second author was supported by a Luce Assistant Professorship, by the Sloan Foundation, and by an NSF CAREER grant DMS-1845037. The last author was supported by the NSF grant DMS-1800294 as well as by a Simons Investigator grant from the Simons Foundation.

\section{Function spaces\label{S:Function_spaces}}
Here we define the function spaces that will play a role in our analysis. They are
weighted spaces with weights given by the sound speed 
squared
$r$ which, in view of \eqref{E:Physical_boundary_condition}, is comparable to the distance
to the boundary. More precisely, since a solution to
\eqref{E:r_v_eq} is not a-priori given, in the 
definitions below we take $r$ to be a fixed
non-degenerate defining function for the domain $\MovingDomT$, i.e., proportional to the distance to
the boundary $\Gamma_t$. In turn, the boundary $\Gamma_t$ is assumed 
to be Lipschitz.

We denote the $L^2$-weighted spaces with weights $h$ by 
$L^2(h)$ and we equip them with the norm
\begin{align}
\norm{f}_{L^2(h)} := \int_{\MovingDomT} h |f|^2 \, dx.
\nonumber
\end{align}
With these notations the base $L^2$ space of pairs of functions in $\Omega$ for our system, denoted by $\H$, is defined as 
\[
\H = L^2(r^{\frac{1-\kappa}{\kappa}}) \times L^2(r^{\frac{1}{k}}).
\]
This space depends only on the choice of $r$. However, we 
will often use an equivalent norm that also depends on $v$, 
which corresponds to the energy space for the linearized problem
 and will also be important in the construction of our energies:
\begin{align}
\norm{(s,w)}^2_{\mathcal{H}} = \int_{\MovingDomT}     r^{\frac{1-\kappa}{\kappa}}  ( s^2  
+ a_2^{-1} r \GInvP^{ij} w_i w_j )\, dx.
\label{E:Basic_H_norm}
\end{align}
This uses $\GInv$ to measure the pointwise norm of the one form
$w$. The $\mathcal{H}$ norm is equivalent to 
the $\mathcal{H}^0$
norm 
(see the definition of $\mathcal{H}^{2k}$ below)
since $\GInv$ is equivalent to 
the the Euclidean inner product with constants depending on
the $L^\infty$ norm of $(r,v)$.

We continue with higher Sobolev norms.
We define $H^{j,\sigma}$, where $j\geq 0$ is an integer and
$\sigma > -\frac{1}{2}$, 
to be the space of all distributions in $\MovingDomT$
whose norm
\begin{align}
\norm{ f }^2_{H^{j,\sigma}} := \sum_{ \vert \alpha \vert \leq j} 
\norm{ r^\sigma \partial^{\alpha}f }_{L^2}^2
\nonumber
\end{align}
is finite. Using interpolation, we extend this definition, thus
defining $H^{s,\sigma}$ for all real $s \geq 0$.

To measure higher regularity we will also need higher Sobolev spaces where the weights depend on the number of derivatives.
More precisely, we define $\mathcal{H}^{2k}$ as the space of pairs of functions $(s,w)$ defined inside $\MovingDom_t$, and
for which the norm below is finite :
\begin{align}
\norm{(s,w)}^2_{\mathcal{H}^{2k}} := 
\sum_{\vert \alpha \vert =0}^{2k} 
\sum_{\substack{a=0 \\
|\alpha|- a\leq k}}^{k} \norm{ r^{\frac{1-\kappa}{2\kappa} + a} \partial^\alpha  s}^2_{L^2}
+
\sum_{\vert \alpha \vert =0}^{2k} 
\sum_{\substack{a=0 \\
|\alpha|- a\leq k}}^{k}
\norm{  r^{\frac{1-\kappa}{2\kappa}+\frac{1}{2} + a} \partial^\alpha w }^2_{L^2}.
\nonumber
\end{align}
We extend the definition of $\mathcal{H}^{2k}$ to non-integer
$k$ using interpolation. An explicit characterization 
of $\mathcal{H}^{2k}$ for non-integer $k$, based on interpolation, was given
in the last two authors' prior work \cite{IT-norel}.
Using the embedding theorems given below, we can
show that the $\mathcal{H}^{2k}$ norm is equivalent to the 
$H^{2k,\frac{1-\kappa}{2\kappa}+k} \times 
H^{2k,\frac{1}{2\kappa}+k}$ norm.

\subsection{The state space \texorpdfstring{$\bfH^{2k}$}{}.}

As already mentioned in the introduction, the state space $\bfH^{2k}$
is defined for $k > k_0$ (i.e. above scaling) as the set of 
pairs of functions $(r,v)$ defined in a domain $\Omega_t$ in $\R^d$ with boundary $\Gamma_t$ with the following properties:

\begin{enumerate}[label=\roman*)]
\item Boundary regularity: $\Gamma_t$ is a Lipschitz surface.

\item Nondegeneracy: $r$ is a Lipschitz function in $\bar \Omega_t$, positive
inside $\Omega_t$ and vanishing simply on the boundary $\Gamma_t$.

\item Regularity: The functions $(r,v)$ belong to $\H^{2k}$.

\end{enumerate}

Since the domain $\Omega_t$ itself depends on the function $r$, one cannot think of $\bfH^{2k}$ as a linear space, but rather as an infinite dimensional 
manifold. 
However, 
describing a manifold structure for $\bfH^{2k}$ is beyond the purposes
of our present paper, particularly since the trajectories associated with our flow are merely expected to be continuous with values in $\bfH^{2k}$.
For this reason, here we will limit ourselves to defining a topology on $\bfH^{2k}$.

\begin{definition}\label{d:convergence}
A sequence $(r_n,v_n)$ converges to $(r,v)$ in $\bfH^{2k}$ if the following 
conditions are satisfied:
\begin{enumerate}[label=\roman*)]
\item Uniform nondegeneracy, $|\nabla r_n| \geq c > 0$.

\item Domain convergence, $\|r_n - r\|_{Lip} \to 0$. Here, 
we consider the functions $r_n$ and and $r$ as extended
to zero outside their domains, giving rise to Liptschitz
functions in $\R^d$.

\item Norm convergence: for each $\epsilon > 0$ there exist 
a smooth function 
$(\tilde{r},\tilde{v})$
in a neighbourhood of $\Omega$ so that 
\[
\| (r,v)- (\tilde r,\tilde v)\|_{\H^{2k}(\Omega)} \leq \epsilon, \qquad 
\limsup_{n \to \infty}  \| (r_n, v_n)- (\tilde r,\tilde v)\|_{\H^{2k}(\Omega_n)} \leq \epsilon.
\]
\end{enumerate}
\end{definition}

The last condition in particular provides both a uniform bound for the sequence $(r_n,v_n)$ in $\H^{2k}(\Omega_n)$ as well as an equicontinuity type property, insuring that a nontrivial 
portion of their $\H^{2k}$ norms cannot concentrate on thinner layers
near the boundary. This is akin to the the conditions in the Kolmogorov-Riesz theorem for compact sets in $L^p$ spaces.

This definition will enable us to achieve two key properties of our flow:
\begin{itemize}
    \item Continuity of solutions $(r,v)$ as functions of $t$ with values
    in $\bfH^{2k}$.
    \item Continuous dependence of solutions $(r,v) \in \bfH^{2k}$ as functions of the initial data $(\mathring{r},\mathring{v}) \in \bfH^{2k}$.
\end{itemize}

\subsection{Regularization and good kernels}
In what follows we outline the main steps developed in Section~2 of \cite{IT-norel}, and in which, for a given state $(r,v)$ in $\bfH^{2k}$, we construct regularized states, denoted by $(r^h, v^h)$, to our free boundary evolution, associated to a dyadic frequency scale $2^h$, $h \geq 0$. This relies on having good regularization operators associated to each dyadic frequency scale $2^h$, $h \geq 0$. We denote these regularization operators by $\Psi^h$, with kernels $K^h$. These are the same  as in \cite{IT-norel}, and their exact definition can be found in there as well. A brief description on how one should envision these  regularization operators  is in order.

It is convenient to think of the  domain $\Omega_t$ as partitioned in dyadic boundary layers, denoting by $\Omega^{[j]}$ the layer at distance $2^{-2j}$ away from the boundary. Within each boundary layer we need to understand which is the correct spatial regularization scale. The principal part of the second order elliptic differential operator associated to our system is the starting point.  Given a dyadic frequency scale $h$, our regularizations will need to select frequencies $\xi$ with the property that $r \xi^2 \lesssim 2^{2h}$, which would require 
kernels on the dual scale 
\[
\delta x \approx r^{\frac12} 2^{-h}.
\]
However, if we are too close to the boundary, i.e. $r \ll 2^{-2h}$,
then we run into trouble with the uncertainty principle, as we would have
$\delta x \gg r$. To remedy this issue we select the spatial scale 
$r \lesssim 2^{-2h}$ and the associated frequency scale $2^{2h}$ as cutoffs in this analysis. Then the way the regularization works is as follows: (i) for $j < h$, the regularizations $(r^h,v^h)$ in $\Omega^{[j]}$ are determined by 
$(r,v)$ also in $\Omega^{[j]}$, and (ii) for $j=h$, the values of $(r,v)$ in $\Omega^{[h]}$ determine $(r^h, v^h)$ in a full neighborhood $\tilde{\Omega}^{[>h]}$ of $\Gamma$, of size $2^{-2h}$. The regularized state is obtained by restricting the full regularization to the domain $\Omega_h :=\left\{ r^h>0\right\}$.

\smallskip

For completeness we state the result in \cite{IT-norel}, and refer the reader  there for the proof:
\begin{proposition}\label{p:reg-state}
Assume that $k > k_0$. Then given a state $(r,v) \in \bfH^{2k}$, there 
exists  a family of regularizations $(r^{h},v^h) \in \bfH^{2k}$,
so that the following properties hold for 
a slowly varying frequency envelope $c_h \in \ell^2$ which satisfies
\begin{equation}\label{app-fe}
\|c_h\|_{\ell^2} \lesssim_A    \| (r,v) \|_{\bfH^{2k}}   .
\end{equation}

\begin{enumerate}[label=\roman*)]
\item  Good approximation, 
\begin{equation}\label{app-point}
   (r^{h},v^h) \to (r,v) \quad \text{  in }  C^1 \times C^\frac12 
   \quad \text{ as } h \to \infty,
\end{equation}
and 
\begin{equation}\label{app-point0}
   \|r^{h} - r\|_{L^\infty(\Omega)} \lesssim 2^{-2(k-k_0+1) h }.
\end{equation}

\item Uniform bound,
\begin{equation}\label{app-uniform}
\| (r^h,v^h) \|_{\bfH^{2k}}   \lesssim_{A} \| (r,v) \|_{\bfH^{2k}}  .
\end{equation}

\item  Higher regularity
\begin{equation}\label{app-high}
\| (r^h,v^h) \|_{\bfH^{2k+2j}_h}   \lesssim  2^{2hj}  c_h, \qquad j > 0.
\end{equation}

\item Low frequency difference bound:
\begin{equation}\label{app-diff}
\| (r^{h+1},v^{h+1}) - (r^h,v^h) \|_{\H_{\tilde r}}   \lesssim  2^{-2hk}  c_h,
\end{equation}
for any defining function $\tilde{r}$ with the property  $|\tilde r - r| \ll 2^{-2h}.$
\end{enumerate}
\end{proposition}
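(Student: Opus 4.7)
The plan is to construct $(r^h, v^h)$ by convolving $(r,v)$ with a family of kernels $K^h$ adapted to the anisotropic geometry near $\Gamma_t$, following the scheme developed for the non-relativistic case in \cite{IT-norel}. Concretely, for each $h \geq 0$ I would introduce a kernel $K^h(x,y)$ whose spatial width depends on the boundary layer: inside $\Omega^{[j]} := \{x : r(x) \approx 2^{-2j}\}$ with $j < h$, the width is $\delta x \approx r^{1/2} 2^{-h} \approx 2^{-(j+h)}$, matching the acoustic relation $r\xi^2 \sim 2^{2h}$; once $j \geq h$, the width is frozen at $2^{-2h}$ and the support reaches through the full boundary slab of thickness $\approx 2^{-2h}$, so that the values of $(r,v)$ in the innermost layer alone produce $(r^h, v^h)$ in a full neighbourhood of $\Gamma_t$. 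One then sets $(r^h, v^h) := \Psi^h(r,v)$ where $\Psi^h$ is the integral operator with kernel $K^h$, and $\Omega_{t,h} := \{r^h > 0\}$.

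The verification of (i)--(iv) would combine standard quantitative mollifier estimates in weighted Sobolev and Hölder norms, applied layer by layer, with a telescoping over dyadic scales. Property (i) follows from the Lipschitz regularity of $r$ and the $\dot{C}^{1/2}$ regularity of $v$ controlled by $A$ via \eqref{sobolev-A}; the refined pointwise estimate \eqref{app-point0} uses the additional embedding gain coming from $k > k_0$, which upgrades $r$ to Hölder class of order $1 + 2(k-k_0)$ at the acoustic scale, so that mollification at spatial scale $2^{-2h}$ near $\Gamma_t$ yields the rate $2^{-2(k-k_0+1)h}$. For (ii), one checks that $\Psi^h$ is uniformly bounded on $\bfH^{2k}$, using that within each layer the weight $r^a$ is essentially constant and passes through the convolution modulo harmless commutators, while the matched spatial scale respects the homogeneity of the norm. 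For (iii), introducing the discrete Littlewood-Paley-type pieces $P_h := \Psi^{h+1} - \Psi^h$ and setting $c_h := \|P_h(r,v)\|_{\bfH^{2k}}$, the envelope bound \eqref{app-fe} follows from the almost-orthogonality of the $P_h$, and anisotropic Bernstein inequalities (each extra derivative costing $2^h$ at the acoustic scale after the weight absorbs the layer anisotropy) produce the gain $2^{2hj}$. The low-frequency difference bound (iv) is Bernstein in the reverse direction, trading $2k$ orders of regularity for $2^{-2hk}$; the substitution $r \mapsto \tilde r$ in the weight is permissible because $|\tilde r - r| \ll 2^{-2h}$ forces $\tilde r$ and $r$ to agree up to constants on the effective support of the integrand.

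I expect the main obstacle to lie in the transition region $j \approx h$ where the kernel scale is frozen and the mollification passes from purely interior to full-slab extension. Two things must be controlled there simultaneously: the regularized domains $\Omega_{t,h}$ must vary slowly in $h$, so that the comparison in (iv) using a single defining function $\tilde r$ is meaningful and nestings of the form $\Omega_{t,h+1} \subset \Omega_{t,h}$ hold modulo an acceptable boundary error; and the $\bfH^{2k}$ mass of the boundary slab must not be artificially concentrated or diluted by the freezing of the scale. Both issues would be addressed by a careful choice of normalization for $K^h$ across layers making the $\Psi^h$ a genuine family of uniformly bounded approximate identities, together with exploiting the weight $r^{(1-\kappa)/(2\kappa)+a}$ built into the $\H^{2k}$ norm, which automatically damps the near-boundary contributions at precisely the rate needed to close the estimates in (iii) and (iv).
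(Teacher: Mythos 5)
The paper does not prove this proposition: it recalls it from \cite{IT-norel} (``we state the result in \cite{IT-norel}, and refer the reader there for the proof''), preceded only by a brief heuristic description of the kernels $K^h$ and the layer-by-layer regularization scheme. Your proposal faithfully reconstructs exactly that scheme---the acoustic scale $\delta x \approx r^{1/2}2^{-h}$ in the interior layers $\Omega^{[j]}$ for $j<h$, the frozen scale $2^{-2h}$ once $j\geq h$, the extension through the boundary slab from $\Omega^{[h]}$, and the restriction to $\Omega_h = \{r^h>0\}$---and your verification outline (telescoping, weights locally constant per layer, anisotropic Bernstein, near-boundary weight cancellation, and the identified difficulty at the transition $j\approx h$) matches the strategy of \cite{IT-norel} that the paper adopts by reference, so this is essentially the same approach. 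One small technical remark: setting $c_h := \|P_h(r,v)\|_{\bfH^{2k}}$ directly need not yield a \emph{slowly varying} envelope; the standard fix (also used in \cite{IT-norel}) is to replace it with a slowly varying majorant $c_h := \sup_{h'} 2^{-\delta|h-h'|}\|P_{h'}(r,v)\|_{\bfH^{2k}}$ for a small $\delta>0$, which still satisfies \eqref{app-fe} by almost-orthogonality.
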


\subsection{Embedding and interpolation theorems\label{S:Embedding_and_interpolation}}
In this section we state some embedding and interpolation
results that will be used throughout.They have been
proved in the last two authors' prior paper \cite{IT-norel}, to which
the reader is referred to for the proofs.

\begin{lemma}\label{l:hardy}
Assume that $s_1 > s_2 \geq 0$ and  $\sigma_1 > \sigma_2 > -\frac12$ 
with $s_1-s_2 = \sigma_1-\sigma_2$.
Then we have 
\begin{equation}
H^{s_1,\sigma_1} \subset H^{s_2,\sigma_2}.
\nonumber 
\end{equation}
\end{lemma}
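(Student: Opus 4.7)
The plan is to reduce the claim, through interpolation and iteration, to a single weighted Hardy inequality near the boundary. First I would observe that once we establish the unit step embedding $H^{j+1,\sigma+1} \subset H^{j,\sigma}$ for every integer $j \geq 0$ and every $\sigma > -\tfrac12$, the general case follows: iterating gives all integer endpoint pairs $(s_1,\sigma_1), (s_2,\sigma_2)$ with $s_1 - s_2 = \sigma_1 - \sigma_2$ an integer, and interpolating both in the smoothness index (by the interpolation definition of $H^{s,\sigma}$) and in the weight exponent (using that $L^2(r^{2\sigma_0})$ and $L^2(r^{2\sigma_1})$ interpolate as $L^2(r^{2((1-\theta)\sigma_0+\theta\sigma_1)})$) fills in all real $s_1 > s_2 \geq 0$ subject to the matching gap condition and $\sigma_2 > -\tfrac12$.

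Next I would reduce the integer step $H^{j+1,\sigma+1} \subset H^{j,\sigma}$ to the base case $j=0$. For any multi-index $\alpha$ with $|\alpha| \leq j$, applying the base case to $\partial^\alpha f$ yields
$$\|r^\sigma \partial^\alpha f\|_{L^2}^2 \lesssim \|r^{\sigma+1} \partial^\alpha f\|_{L^2}^2 + \|r^{\sigma+1} \nabla \partial^\alpha f\|_{L^2}^2,$$
and summing over $|\alpha| \leq j$ bounds $\|f\|_{H^{j,\sigma}}^2$ by $\|f\|_{H^{j+1,\sigma+1}}^2$, since every derivative appearing on the right-hand side has total order at most $j+1$.

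The heart of the argument is therefore the base case
$$\int_{\Omega_t} r^{2\sigma} |f|^2\, dx \lesssim \int_{\Omega_t} r^{2\sigma+2} \bigl( |f|^2 + |\nabla f|^2 \bigr) dx, \qquad \sigma > -\tfrac12,$$
which I would prove by localization. Away from $\Gamma_t$, $r$ is bounded below by a positive constant, so the two weights are pointwise comparable and there is nothing to do. In a collar neighborhood of $\Gamma_t$, the Lipschitz hypothesis on the boundary lets me introduce boundary-normal coordinates $(x_1, y)$ in which $r \simeq x_1$; after a partition of unity, the claim reduces to the one-dimensional weighted Hardy inequality
$$\int_0^\delta x_1^{2\sigma} g(x_1)^2\, dx_1 \lesssim \delta^{2\sigma+1} g(\delta)^2 + \int_0^\delta x_1^{2\sigma+2} g'(x_1)^2\, dx_1,$$
integrated in $y$. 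This one-dimensional bound follows from integrating by parts against the antiderivative $x_1^{2\sigma+1}/(2\sigma+1)$, followed by Cauchy-Schwarz and absorption; the hypothesis $\sigma > -\tfrac12$ is exactly what makes $x_1^{2\sigma+1}$ integrable at the origin and forces the boundary term at $x_1 = 0$ to vanish for functions in $H^{1,\sigma+1}$. The outer boundary term at $x_1 = \delta$ is absorbed by the bulk estimate via the partition of unity.

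The hard part will be the base case weighted Hardy step, and specifically the sharp condition $\sigma > -\tfrac12$, which is tied to the vanishing of the boundary contribution at $\Gamma_t$ in the integration-by-parts argument. The Lipschitz regularity of $\Gamma_t$ assumed in Section~\ref{S:Function_spaces} is precisely what permits local straightening and the Fubini step; everything else in the proof is essentially bookkeeping once this single estimate is in hand.
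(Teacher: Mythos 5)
Your overall strategy is sound and is essentially the only reasonable one: reduce to a single-derivative step, reduce that to a zeroth-order weighted Hardy inequality, and prove the latter by localizing to a boundary collar and invoking the one-dimensional Hardy estimate after straightening. The one-dimensional integration-by-parts computation is correct, the condition $\sigma > -\tfrac12$ enters exactly where you say it does (vanishing of the boundary term at $x_1 = 0$), and the reduction from $j+1 \to j$ to $1 \to 0$ by applying the base case to $\partial^\alpha f$ and summing is fine. The paper itself gives no proof and defers entirely to the companion work \cite{IT-norel}, so no line-by-line comparison is possible, but this is almost certainly the intended argument.

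The step that needs more care is your interpolation claim for non-integer indices. The paper defines $H^{s,\sigma}$ for non-integer $s$ by interpolating the integer-order spaces \emph{with $\sigma$ fixed}; it does not assert a two-parameter interpolation identity $[H^{j_0,\sigma_0}, H^{j_1,\sigma_1}]_\theta = H^{s_\theta,\sigma_\theta}$. Your argument needs exactly this when the gap $s_1 - s_2 = \sigma_1 - \sigma_2$ is not an integer (which does occur, e.g.\ in Lemma~\ref{l:sobolev} where the gap equals $\sigma$): interpolation in $s$ alone only moves along lines of fixed $\sigma$, while interpolation of the $L^2(r^{2\sigma})$ weights alone only covers $j=0$. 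For $j \geq 1$ the space $H^{j,\sigma}$ is a finite intersection of weighted $L^2$ spaces, and the assertion that the whole two-parameter scale interpolates with both $s$ and $\sigma$ moving linearly is a genuine additional fact that must be either cited or proved (it can be established, e.g.\ via a retraction onto a vector-valued $L^2$-scale or via the Stein--Weiss theorem applied componentwise together with a commutation argument, but it is not an immediate consequence of the definitions as given here). Everything else in your argument is correct as written.
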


As a corollary of the above lemma we have embeddings into standard Sobolev spaces:
\begin{lemma}\label{l:sobolev}
Assume that $\sigma > 0$ and $\sigma \leq j$.
Then we have 
\begin{equation}
H^{j,\sigma} \subset H^{j-\sigma}.
\nonumber
\end{equation}
\end{lemma}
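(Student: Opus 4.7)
The plan is to derive this as a direct corollary of the preceding Hardy-type embedding Lemma \ref{l:hardy}, by trading exactly $\sigma$ units of the weight exponent for $\sigma$ units of Sobolev regularity, bringing the weight down to $0$.

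Concretely, I would apply Lemma \ref{l:hardy} with the choice
\[
s_1 = j, \quad \sigma_1 = \sigma, \qquad s_2 = j-\sigma, \quad \sigma_2 = 0.
\]
All of the required hypotheses follow immediately from the assumptions $\sigma > 0$ and $\sigma \leq j$: one has $s_1 > s_2$ since $\sigma > 0$, $\sigma_1 > \sigma_2 > -\tfrac12$ since $\sigma > 0 > -\tfrac12$, and $s_2 = j-\sigma \geq 0$ since $\sigma \leq j$, while the balance condition $s_1 - s_2 = \sigma_1 - \sigma_2$ is just $\sigma = \sigma$. Lemma \ref{l:hardy} then yields
\[
H^{j,\sigma}(\Omega_t) \subset H^{j-\sigma,0}(\Omega_t).
\]

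It remains to identify $H^{j-\sigma,0}$ with the standard Sobolev space $H^{j-\sigma}$. When $j-\sigma$ is an integer this is immediate from the definition, since $r^0 = 1$ reduces the weighted norm $\sum_{|\alpha|\leq j-\sigma}\|r^0\partial^\alpha f\|_{L^2}^2$ to the usual Sobolev norm on the Lipschitz domain $\Omega_t$. For non-integer values of $j-\sigma$, both $H^{j-\sigma,0}$ and $H^{j-\sigma}$ are defined by real interpolation between their integer endpoints, and since the endpoint spaces agree, so do the interpolated spaces. The only mild technical point to keep in mind is that all of this takes place on the bounded Lipschitz domain $\Omega_t$, so the standard Sobolev space here is understood in the usual intrinsic sense on $\Omega_t$; no routine computations are required beyond invoking the previous lemma.
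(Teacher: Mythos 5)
Your proof is correct and matches the paper's approach exactly: the paper presents Lemma \ref{l:sobolev} as an immediate corollary of Lemma \ref{l:hardy}, obtained precisely by the choice $s_1 = j$, $\sigma_1 = \sigma$, $s_2 = j - \sigma$, $\sigma_2 = 0$ that you made. Your additional remark identifying $H^{j-\sigma,0}$ with $H^{j-\sigma}$ (trivially for integer exponents, by interpolation otherwise) is a reasonable clarification that the paper leaves implicit.
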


In particular, by standard Sobolev embeddings,  we also have Morrey type  embeddings into $C^s$ spaces:
\begin{lemma} \label{l:morrey}
We have 
\begin{equation}\
H^{j,\sigma} \subset C^{s}, \qquad   0 \leq s \leq  j- \sigma - \frac{d}2,
\nonumber
\end{equation}
where the equality can hold only if $ s$ is not an integer.
\end{lemma}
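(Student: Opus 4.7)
The plan is to chain two inclusions: first convert the weighted Sobolev space $H^{j,\sigma}$ into a standard (unweighted) Sobolev space via the preceding Lemma~\ref{l:sobolev}, and then apply the classical Morrey embedding on the Lipschitz domain $\Omega_t$. The lemma is essentially a corollary of Lemma~\ref{l:sobolev}; the weighted-to-unweighted trade has already been done, so the remaining work is just the standard embedding into H\"older spaces, with the familiar endpoint restriction at integer indices.

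More precisely, the first step is to apply Lemma~\ref{l:sobolev} to obtain the continuous inclusion
\[
H^{j,\sigma}(\Omega_t) \subset H^{j-\sigma}(\Omega_t)
\]
whenever $0 < \sigma \leq j$. The sub-range $j - \sigma \geq d/2$ is automatically available here because the hypothesis $0 \leq s \leq j - \sigma - \tfrac{d}{2}$ cannot be satisfied otherwise. The boundary case $\sigma = 0$ is trivial because $H^{j,0} = H^j$, while for $\sigma < 0$ (which is admissible since the definition only requires $\sigma > -\tfrac12$) the bound $r \lesssim 1$ on the bounded domain $\Omega_t$ gives $H^{j,\sigma} \subset H^{j,0} = H^j$ directly, so no new input from Lemma~\ref{l:sobolev} is needed.

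The second step is the classical Morrey embedding for the Lipschitz domain $\Omega_t$: for an unweighted Sobolev space one has
\[
H^{j-\sigma}(\Omega_t) \subset C^{s}(\overline{\Omega_t}), \qquad 0 \leq s \leq (j-\sigma) - \tfrac{d}{2},
\]
with equality permitted only at non-integer $s$. Lipschitz regularity of $\Gamma_t$ is precisely what is required to use a Stein-type extension operator to reduce this to the standard Sobolev embedding on $\R^d$, and the endpoint failure at integer values of $s$ is the familiar obstruction there (e.g.\ $H^{d/2}(\R^d) \not\subset L^\infty$). Composing the two inclusions yields exactly the stated embedding with the stated non-integer endpoint restriction.

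The only real obstacle is conceptual rather than technical: one must verify that the endpoint condition transfers correctly through the composition, i.e.\ that the non-integer requirement on $s$ at equality in our Morrey statement matches the non-integer requirement at the endpoint of the standard Sobolev embedding applied to $H^{j-\sigma}$. Since the exponent lost in Lemma~\ref{l:sobolev} is exactly $\sigma$ and the exponent lost in standard Morrey is exactly $d/2$, the total loss $j - \sigma - d/2$ appears cleanly, and the integer/non-integer dichotomy is inherited unchanged from the standard embedding. No further calculations are required.
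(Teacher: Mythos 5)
Your core argument — first pass through Lemma~\ref{l:sobolev} to obtain $H^{j,\sigma}\subset H^{j-\sigma}$, then apply the classical Morrey/Sobolev embedding on the Lipschitz domain $\Omega_t$ (via a Stein extension), inheriting the integer-endpoint restriction — is exactly the route the paper has in mind; the paper announces the lemma as "a corollary of the above lemma" followed "by standard Sobolev embeddings," which is precisely this two-step chain.

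One caveat: your attempt to cover $\sigma<0$ does not actually deliver the stated conclusion and should be dropped. For $\sigma<0$ you argue $H^{j,\sigma}\subset H^{j,0}=H^j$ using $r\lesssim 1$; this is fine as an inclusion, but the classical embedding applied to $H^j$ only yields $C^s$ for $s\leq j-\tfrac{d}{2}$, whereas the lemma's claim $s\leq j-\sigma-\tfrac{d}{2}$ is \emph{strictly stronger} when $\sigma<0$. In fact the stronger claim is false: taking $f$ smooth and compactly supported away from $\Gamma_t$ shows that, for any $\sigma$, the $H^{j,\sigma}$ norm is comparable to the $H^j$ norm on that function, and no extra interior Hölder regularity beyond $j-\tfrac{d}{2}$ can be extracted. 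The weight $r^\sigma$ with $\sigma<0$ merely forces decay at the boundary, not improved interior smoothness. The intended reading, consistent with the surrounding lemmas, is that the hypotheses $\sigma>0$, $\sigma\leq j$ of Lemma~\ref{l:sobolev} are carried over; $\sigma=0$ is trivially the unweighted case.
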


Next, we state the interpolation bounds:

\begin{proposition}\label{p:interpolation-g}
Let $\sigma_0, \sigma_m \in \mathbb{R}$ and $1\leq p_0,p_m\leq \infty$. Define 
\begin{equation}
\nonumber
\theta_j =   \frac{j}{m}, \qquad \frac{1}{p_j} = \frac{1-\theta_j}{p_0}+\frac{\theta_j}{p_m}, \qquad \sigma_j = 
\sigma_0(1-\theta_j) + \sigma _m\theta_j,
\end{equation}
and assume that
\begin{equation}
m - \sigma_m - d\left( \frac{1}{p_m}-\frac{1}{p_0} \right) > -\sigma_0, \qquad \sigma_j >-\frac{1}{p_j}.
\nonumber
\end{equation}
Then  for $0 < j < m$ we have
\begin{equation}
\nonumber
\| r^{\sigma_j } \partial^j f \|_{L^{p_j}} \lesssim \|  r^{\sigma_0}  f \|_{L^{p_0}}^{1-\theta_j} \|r^{\sigma_m}\partial^m f \|_{L^{p_m}}^{\theta_j} . 
\end{equation}
\end{proposition}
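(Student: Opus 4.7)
The plan is to reduce the weighted interpolation to local, unweighted Gagliardo--Nirenberg estimates on dyadic shells where $r$ is essentially constant, and then recombine using H\"older's inequality. First, by a partition of unity, one reduces to working in a collar neighborhood of the boundary $\Gamma_t$ where $r$ is comparable to the distance to $\Gamma_t$; away from the boundary the weights are bounded and the classical Gagliardo--Nirenberg inequality applies directly. Within the collar, I would decompose dyadically into shells $\Omega_n = \{x : r(x) \sim 2^{-n}\}$, covered by a bounded-overlap family of balls $B_R$ of radius $R \sim 2^{-n}$, each centered at distance comparable to $R$ from the boundary.

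On each such ball the weight $r^{\sigma_j}$ is essentially constant, equal to $R^{\sigma_j}$, and after a scale-invariant extension of $f$ to $\R^d$ the classical Gagliardo--Nirenberg inequality yields
\[
\|\partial^j f\|_{L^{p_j}(B_R)} \lesssim \|f\|_{L^{p_0}(\tilde B_R)}^{1-\theta_j}\|\partial^m f\|_{L^{p_m}(\tilde B_R)}^{\theta_j}
\]
with a constant independent of $R$. This is the scale invariance of Gagliardo--Nirenberg, valid precisely because $\theta_j = j/m$ and $1/p_j = (1-\theta_j)/p_0 + \theta_j/p_m$. Multiplying through by $R^{\sigma_j}$ and using the definition $\sigma_j = (1-\theta_j)\sigma_0 + \theta_j\sigma_m$, the powers of $R$ distribute correctly across the right-hand side, yielding the weighted inequality on each ball. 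Raising to the $p_j$-th power and summing over the bounded-overlap covering of $\Omega_n$ then gives the shell-level estimate
\[
\|r^{\sigma_j}\partial^j f\|_{L^{p_j}(\Omega_n)} \lesssim \|r^{\sigma_0}f\|_{L^{p_0}(\tilde\Omega_n)}^{1-\theta_j}\|r^{\sigma_m}\partial^m f\|_{L^{p_m}(\tilde\Omega_n)}^{\theta_j}.
\]

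The global bound is then obtained by summing over $n$ and applying H\"older's inequality to the sequences indexed by the shell number: the conjugate exponents $p_j/((1-\theta_j)p_0)$ and $p_j/(\theta_j p_m)$ have reciprocals summing to $1$ precisely via the definition of $p_j$, collapsing the dyadic sum into the desired product $\|r^{\sigma_0}f\|_{L^{p_0}}^{1-\theta_j}\|r^{\sigma_m}\partial^m f\|_{L^{p_m}}^{\theta_j}$. The hypothesis $\sigma_j > -1/p_j$ is needed for the weighted integrals to be finite near the boundary, while $m - \sigma_m - d(1/p_m - 1/p_0) > -\sigma_0$ addresses the main technical obstacle, namely the control of lower-order derivative terms introduced by applying Gagliardo--Nirenberg on bounded balls via extension operators. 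Such correction terms scale like $R^{-\theta_j C}$ with $C = m - \sigma_m - d(1/p_m - 1/p_0) + \sigma_0 > 0$, and they are controlled by iterated Gagliardo--Nirenberg applied in turn to the lower-order pieces, the positivity of $C$ ensuring that the resulting geometric series sum convergently. Finally, special care is needed at the endpoints $p_0 = \infty$ or $p_m = \infty$, where $\ell^\infty$ arguments replace the dyadic $\ell^p$ summation.
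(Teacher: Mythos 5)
This paper does not actually prove Proposition~\ref{p:interpolation-g}; it states it and refers the reader to the companion paper \cite{IT-norel}, noting moreover that ``these inequalities \dots have not appeared in the literature before.'' So there is no in-text proof to compare against. I can, however, assess your argument on its own terms, and there is a genuine gap in how you handle the error terms from the local Gagliardo--Nirenberg estimates on small balls.

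The spirit of your proof -- reduce to a collar, run a Whitney decomposition with balls of radius $R \sim 2^{-n}$ comparable to the distance to $\Gamma_t$, freeze the weight on each ball, apply a scaled Gagliardo--Nirenberg inequality, and recombine via H\"older on the shell sum -- is a natural strategy and the right starting point. The issue is the lower order term that local Gagliardo--Nirenberg on a bounded ball $B_R$ necessarily produces (take a nonzero polynomial of degree $<m$: the LHS is nonzero while $\|\partial^m f\|=0$, so no ``scale-invariant extension'' can remove this term). A direct rescaling of the $B_1$ inequality gives, on $B_R$,
\begin{equation*}
\| \partial^j f\|_{L^{p_j}(B_R)} \lesssim \| f\|_{L^{p_0}(B_R)}^{1-\theta_j} \|\partial^m f\|_{L^{p_m}(B_R)}^{\theta_j}
+ R^{\,\theta_j\left(-m + d\left(\frac{1}{p_m}-\frac{1}{p_0}\right)\right)}\, \|f\|_{L^{p_0}(B_R)} ,
\end{equation*}
and after multiplying by $R^{\sigma_j}$ and using $\sigma_j - \sigma_0 = \theta_j(\sigma_m - \sigma_0)$, the exponent of $R$ multiplying $R^{\sigma_0}\|f\|_{L^{p_0}(B_R)}$ in the correction is exactly $-\theta_j C$ with $C = m - \sigma_m - d(1/p_m - 1/p_0) + \sigma_0$. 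Since the hypothesis gives $C>0$ and $R = 2^{-n}\to 0$ near the boundary, $R^{-\theta_j C} = 2^{n\theta_j C}$ grows geometrically as $n\to\infty$, i.e.\ the error factor blows up near the vacuum boundary rather than decaying. Your claim that ``the positivity of $C$ [ensures] that the resulting geometric series sum convergently'' therefore has the sign backwards; as written, the dyadic sum of the correction terms diverges, and ``iterated Gagliardo--Nirenberg'' cannot bottom out because the correction already involves the lowest order quantity $\|f\|_{L^{p_0}}$. The correct treatment must be more subtle -- e.g.\ subtracting local polynomial averages so that the correction term sees only higher derivatives, which can then be absorbed, or working directly with weighted Hardy-type inequalities across shells rather than additive local corrections -- and the hypothesis $C>0$ plays a role (ordering the scaling levels of the top and bottom norms, analogous to the exponent condition in Lemma~\ref{l:hardy}) that is different from the one you assign it. As a secondary point, the statement should also be understood with the implicit localization assumed throughout the paper (functions supported in a fixed collar near a single boundary patch); without it even the unweighted one-dimensional case $\|f'\|_{L^2(0,1)} \lesssim \|f\|_{L^2}^{1/2}\|f''\|_{L^2}^{1/2}$ fails for $f(x)=x$, so your preliminary reduction to the collar needs to invoke that standing assumption explicitly rather than treat the far-from-boundary region as an afterthought.
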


\begin{remark}
One particular case of the above proposition which will be used later is when  $p_0=p_1=p_2=2$, with the corresponding relation in between the exponents of the  $r^{\sigma_j}$ weights.
\end{remark} 

As the objective here is to interpolate
between the $L^2$ type $H^{m,\sigma}$ norm and  $L^\infty$ bounds, we will need the following straightforward consequence of Proposition~\ref{p:interpolation-g}:

\begin{proposition}\label{p:interpolation}
Let  $\sigma_m > -\frac12$ and 
\begin{equation}
m - \sigma_m - \frac{d}2 > 0.
\nonumber 
\end{equation}
Define
\begin{equation}
\nonumber
\theta_j =   \frac{j}{m}, \qquad \frac{1}{p_j} = \frac{\theta_j}2, \qquad \sigma_j =  \sigma_m \theta_j.
\end{equation}
Then  for $0 < j < m$ we have
\begin{equation}
\nonumber
\| r^{\sigma_j } \partial^j f \|_{L^{p_j}} \lesssim \|   f \|_{L^\infty}^{1-\theta_j} \|r^{\sigma_m}\partial^{m}f \|_{L^{2}}^{\theta_j}  .
\end{equation}
\end{proposition}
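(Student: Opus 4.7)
The plan is to obtain Proposition~\ref{p:interpolation} as a direct specialization of the more general Proposition~\ref{p:interpolation-g}, with the parameter choices $p_0 = \infty$, $\sigma_0 = 0$, and $p_m = 2$, keeping $\sigma_m$ and $m$ as given. First I would check that with these choices the general interpolation parameters reduce exactly to the ones in the statement: $\theta_j = j/m$ is unchanged, $1/p_j = (1-\theta_j)/\infty + \theta_j/2 = \theta_j/2$, and $\sigma_j = 0\cdot(1-\theta_j) + \sigma_m \theta_j = \sigma_m \theta_j$. These match the formulas asserted in Proposition~\ref{p:interpolation} line by line.

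Next I would verify the two admissibility conditions of Proposition~\ref{p:interpolation-g}. The first condition,
\[
m - \sigma_m - d\left(\tfrac{1}{p_m}-\tfrac{1}{p_0}\right) > -\sigma_0,
\]
becomes $m - \sigma_m - d/2 > 0$, which is exactly the standing hypothesis of Proposition~\ref{p:interpolation}. The second condition, $\sigma_j > -1/p_j$ for $0 < j < m$, reads $\sigma_m \theta_j > -\theta_j/2$; since $\theta_j > 0$ in this range, it is equivalent to $\sigma_m > -1/2$, also assumed. Applying Proposition~\ref{p:interpolation-g} under these parameters then produces the stated bound.

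The only point that warrants care — and thus the main obstacle, such as it is — is the endpoint $p_0 = \infty$: one must confirm that the formulation of Proposition~\ref{p:interpolation-g} (proved in \cite{IT-norel}) accommodates the convention $1/p_0 = 0$ with the $L^\infty$ norm on the right-hand side. If it does, no extra argument is required. If instead one prefers to avoid invoking the endpoint directly, the inequality can be run with $p_0$ replaced by a large finite exponent $q$, yielding
\[
\| r^{\sigma_j^{(q)}} \partial^j f \|_{L^{p_j^{(q)}}} \lesssim \| f \|_{L^q}^{1-\theta_j} \| r^{\sigma_m} \partial^m f \|_{L^2}^{\theta_j},
\]
and one then passes to the limit $q \to \infty$, using that $\|f\|_{L^q(\Omega_t)} \to \|f\|_{L^\infty(\Omega_t)}$ on a bounded domain (together with continuity of the interpolation exponents in $q$ and uniformity of the implicit constants on the admissibility set). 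This limiting step is routine book-keeping and recovers the stated inequality without relying on the endpoint case of the general proposition.
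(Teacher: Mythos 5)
Your proposal is correct and follows exactly the route the paper has in mind: the text introduces Proposition~\ref{p:interpolation} as ``the following straightforward consequence of Proposition~\ref{p:interpolation-g},'' and your specialization $p_0=\infty$, $\sigma_0=0$, $p_m=2$ with the ensuing parameter and admissibility checks is precisely that consequence. The precaution you raise about the endpoint $p_0=\infty$ is unnecessary here, since Proposition~\ref{p:interpolation-g} is stated for $1\le p_0,p_m\le\infty$ and thus already includes the $L^\infty$ case, but your fallback limiting argument is harmless.
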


We will also need the following two variations of Proposition~\ref{p:interpolation}:

\begin{proposition}
\label{p:interpolation-c}
Let  $\sigma_m >-\frac{1}{2}$ and 
\[
m-\frac{1}{2}-\sigma_m -\frac{d}{2}>0.
\]
Define 
\[
\sigma_j=\sigma_m\theta_j,\quad \theta_j=\frac{2j-1}{2m-1}, \quad \frac{1}{p_j}=\frac{\theta_j}{2}.
\]
Then for $0<j<m$ we have
\[
\Vert r^{\sigma_j}\partial ^j f\Vert_{L^{p_j}}\lesssim \Vert f\Vert^{1-\theta_j}_{\dot{C}^{\frac{1}{2}}}  \Vert r^{\sigma_m}\partial^m f \Vert^{\theta_j}_{L^{2}}.
\]
\end{proposition}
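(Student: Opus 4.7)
The statement is the $\dot C^{1/2}$-endpoint variant of Proposition~\ref{p:interpolation}: the low-regularity endpoint is moved from $L^\infty$ (regularity $0$) to $\dot C^{1/2}$ (regularity $1/2$), while the high-regularity endpoint $H^{m,\sigma_m}$ is unchanged. The exponent $\theta_j = (2j-1)/(2m-1) = (j-1/2)/(m-1/2)$ is then the unique linear-interpolation position between $1/2$ and $m$ that reaches position $j$, and the relations $\sigma_j = \theta_j \sigma_m$, $1/p_j = \theta_j/2$ are forced by scaling. The admissibility condition $m - 1/2 - \sigma_m - d/2 > 0$ is the original condition of Proposition~\ref{p:interpolation} shifted by $1/2$ to reflect the extra half-derivative at the Hölder endpoint.

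The plan is to reduce to the unweighted Gagliardo--Nirenberg inequality with $\dot C^{1/2}$ endpoint by working shell-by-shell near the boundary, following the strategy used in \cite{IT-norel} for Propositions~\ref{p:interpolation-g} and \ref{p:interpolation}. Decompose $\MovingDomT$ into dyadic layers $\Omega^{[l]}$ at distance $\approx 2^{-2l}$ from $\Gamma_t$. On each layer $r \approx 2^{-2l}$, so each weighted factor reduces to a constant prefactor $2^{-2l\sigma}$. After the appropriate parabolic rescaling matching the spatial scale $r^{1/2} 2^{-l}$ in the direction normal to the boundary, the shell estimate becomes the standard unweighted inequality
\[
\|\partial^j \tilde f\|_{L^{p_j}} \lesssim \|\tilde f\|_{\dot C^{1/2}}^{1-\theta_j}\|\partial^m \tilde f\|_{L^2}^{\theta_j}
\]
on a reference domain, which holds by a classical Littlewood--Paley/Bernstein argument precisely under the hypothesis $m - 1/2 - d/2 > 0$. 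Crucially, the identity $\sigma_j = \theta_j\sigma_m$ is exactly what is needed for the weight prefactors $2^{-2l\sigma_j}$ and $2^{-2l\sigma_m\theta_j}$ to cancel when the rescaled inequality is translated back into the weighted norms on $\Omega^{[l]}$.

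Finally, summing the shell estimates over $l$ exploits the arithmetic identity $\theta_j p_j = 2$ (equivalent to $1/p_j = \theta_j/2$): raising the shell-level bound to the $p_j$-th power, its right-hand side contributes $\|r^{\sigma_m}\partial^m f\|_{L^2(\Omega^{[l]})}^{\theta_j p_j} = \|r^{\sigma_m}\partial^m f\|_{L^2(\Omega^{[l]})}^{2}$, and since the shells are disjoint this assembles directly to $\|r^{\sigma_m}\partial^m f\|_{L^2(\MovingDomT)}^{2}$ and to $\|r^{\sigma_j}\partial^j f\|_{L^{p_j}(\MovingDomT)}^{p_j}$ on the other side; taking $p_j$-th roots yields the claim. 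The bulk region $\{r \gtrsim 1\}$ contributes a single shell handled by the standard bulk Gagliardo--Nirenberg.

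The main technical obstacle is maintaining uniform control of constants across the dyadic shells: the geometry of $\Omega^{[l]}$ degenerates as $l \to \infty$, so one must verify that each shell, after parabolic rescaling, can be covered by pieces on a fixed reference domain for which the unweighted $\dot C^{1/2}$-endpoint Gagliardo--Nirenberg applies with an $l$-independent constant. This is exactly what the boundary-adapted partition of unity and rescaling machinery of \cite{IT-norel} already provides for Proposition~\ref{p:interpolation-g}; once it is invoked here, the computation above is routine. The other consequence of the shift to $\dot C^{1/2}$ is merely in the admissibility threshold, and no additional structural difficulty arises.
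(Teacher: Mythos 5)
The paper itself does not prove this proposition: all four interpolation statements in Section~\ref{S:Embedding_and_interpolation} are referred to \cite{IT-norel} for their proofs, so there is no in-paper argument to compare against. Judged on its own terms, your sketch identifies the correct framework (dyadic boundary-layer decomposition, rescaling, unweighted Gagliardo--Nirenberg, summation via $\theta_j p_j = 2$) and the exponent bookkeeping ($\sigma_j = \theta_j\sigma_m$ forcing the weight prefactors to cancel) is right. But the step you describe as ``routine'' is exactly where the argument breaks. The claimed unweighted inequality
\[
\|\partial^j \tilde f\|_{L^{p_j}} \lesssim \|\tilde f\|_{\dot{C}^{1/2}}^{1-\theta_j} \|\partial^m \tilde f\|_{L^2}^{\theta_j}
\]
is false on a bounded reference domain: take $\tilde f$ affine and $j=1<m$; the left side is a nonzero constant while the right side vanishes. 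Extension plus whole-space Gagliardo--Nirenberg gives only the version with an additive lower-order term, e.g.\ $+\,\|\tilde f\|_{\dot{C}^{1/2}}$. The difficulty is that when you undo the rescaling on shell $\Omega^{[l]}$, that lower-order term acquires the factor $2^{c_2 l}$ with
\[
c_2 \;=\; 2\Bigl(j-\sigma_j-\frac{d}{p_j}\Bigr)-1 \;=\; \theta_j\bigl(2m-1-2\sigma_m-d\bigr),
\]
which is \emph{strictly positive} precisely because of the admissibility hypothesis $m-\tfrac12-\sigma_m-\tfrac{d}{2}>0$. (The principal term carries the factor $2^{c_1 l}$ with $c_1=0$; that is the cancellation you correctly exploit.) So the lower-order contributions grow geometrically in $l$ and the sum over shells diverges: the naive shell-by-shell scheme does not close. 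The fix is not merely ``uniform control of constants across shells,'' as you suggest; one must in addition show that the Hölder oscillation of $f$ seen at the scale of shell $l$ decays at a matching rate --- for instance by controlling it through the normal derivative and the $H^{m,\sigma_m}$ bound, or by replacing the shell-by-shell summation with a frequency decomposition adapted jointly to the weight and the boundary. That is the substantive content deferred to \cite{IT-norel}, and as written your sketch does not supply it.
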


\begin{proposition}
\label{p:interpolation-d}
Let  $\sigma_m >\frac{m-2}{2}$ and 
\[
m-\frac{1}{2}-\sigma_m -\frac{d}{2}>0.
\]
Define 
\[
\sigma_j=\sigma_m\theta_j-\frac12(1-\theta_j),\quad \theta_j=\frac{j}{m}, \quad \frac{1}{p_j}=\frac{\theta_j}{2}.
\]
Then for $0<j<m$ we have
\[
\Vert r^{\sigma_j}\partial ^j f\Vert_{L^{p_j}}\lesssim \Vert f\Vert^{1-\theta_j}_{\tilde{C}^\frac{1}{2}}  \Vert r^{\sigma_m}\partial^m f \Vert^{\theta_j}_{L^{2}}.
\]
\end{proposition}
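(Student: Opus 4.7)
The plan is to adapt the proof of Proposition~\ref{p:interpolation-c}, replacing the $\dot{C}^{\frac{1}{2}}$ control with what the weaker $\tilde{C}^{\frac{1}{2}}$ norm affords at each boundary scale. First, partition $\Omega_t$ into dyadic boundary layers $\Omega^{[k]}=\{2^{-2k-2}<r<2^{-2k+1}\}$ and cover each $\Omega^{[k]}$ by essentially Euclidean cubes $Q$ of side $\delta_k=2^{-2k}$ matched to the layer thickness; this is possible thanks to the Lipschitz regularity of $\Gamma_t$ built into the function space. The key observation is that for $x,y\in Q$ one has $r(x)^{1/2}+r(y)^{1/2}+|x-y|^{1/2}\lesssim 2^{-k}$, so the very definition of $\tilde{C}^{\frac{1}{2}}$ yields
\[
\|f-\bar f_Q\|_{L^\infty(Q)} \lesssim 2^{-k}\,\|f\|_{\tilde{C}^{\frac{1}{2}}},
\]
where $\bar f_Q$ denotes the mean of $f$ over $Q$. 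This is the $\tilde{C}^{\frac{1}{2}}$-analogue of the oscillation control $\dot{C}^{\frac{1}{2}}$ provides, but with an extra $r^{1/2}$-gain per cube that ultimately accounts for the different weight exponent $\sigma_j$ appearing in Proposition~\ref{p:interpolation-d} versus Proposition~\ref{p:interpolation-c}.

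On each cube I would then apply the standard scale-invariant Gagliardo--Nirenberg inequality to $f-\bar f_Q$ (permissible since $j\geq 1$), which with endpoints $L^\infty$ and $L^2$ yields
\[
\|\partial^j f\|_{L^{p_j}(Q)} \lesssim \|f-\bar f_Q\|_{L^\infty(Q)}^{1-\theta_j}\,\|\partial^m f\|_{L^2(Q)}^{\theta_j},
\]
since $1/p_j=\theta_j/2$ is the scale-invariant exponent relation for $L^\infty\to L^2$ interpolation. Substituting the oscillation bound and inserting the constant weights $r^{\sigma_j}\sim 2^{-2k\sigma_j}$ on the left and $r^{\sigma_m}\sim 2^{-2k\sigma_m}$ (to the $\theta_j$-th power) on the right, the cumulative power of $2^{-k}$ on the right-hand side is $-2\sigma_j-(1-\theta_j)+2\sigma_m\theta_j$, which vanishes by the very definition $\sigma_j=\sigma_m\theta_j-\tfrac12(1-\theta_j)$. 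Hence on each cube
\[
\|r^{\sigma_j}\partial^j f\|_{L^{p_j}(Q)} \lesssim \|f\|_{\tilde{C}^{\frac{1}{2}}}^{1-\theta_j}\,\|r^{\sigma_m}\partial^m f\|_{L^2(Q)}^{\theta_j}.
\]

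To conclude, raise to the $p_j$-th power and sum over the cube-covering of $\Omega_t$. Since $p_j\theta_j=2$, the $\theta_j$-th powers of squared $L^2$ pieces assemble into the global $\|r^{\sigma_m}\partial^m f\|_{L^2(\Omega_t)}^{2}$, and extracting a $p_j$-th root produces the stated inequality. The two hypotheses play distinct roles: $m-\tfrac12-\sigma_m-d/2>0$ is the Morrey-type embedding $H^{m,\sigma_m}\hookrightarrow \tilde{C}^{\frac{1}{2}}$ that underpins the interpolation endpoint, while $\sigma_m>(m-2)/2$, via the worst case $j=1$, is equivalent to $\sigma_j>-1/p_j$ and so guarantees local integrability of the weighted $L^{p_j}$ norm up to $\Gamma_t$. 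The main technical hurdle is not the algebra but the geometric setup: matching cube side to layer thickness is what makes the cancellation of $2^{-k}$ powers exact and prevents logarithmic losses upon summation; once that is arranged, the $L^\infty$-gain from $\tilde{C}^{\frac{1}{2}}$ on each cube compensates precisely for the additional $r^{1/2}$-weight built into $\sigma_j$.
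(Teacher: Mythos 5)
The paper defers the proof of this proposition (and its siblings in Section~\ref{S:Embedding_and_interpolation}) to the companion paper \cite{IT-norel}, so there is no in-paper argument to compare against; I evaluate your proposal on its own terms.

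Your argument has a genuine gap in the Gagliardo--Nirenberg step on each cube. On a cube $Q$ of side $\delta$, the scale-invariant interpolation for a mean-zero function $g$ necessarily carries a lower-order term,
\[
\|\partial^j g\|_{L^{p_j}(Q)} \lesssim \|g\|_{L^\infty(Q)}^{1-\theta_j}\|\partial^m g\|_{L^2(Q)}^{\theta_j} + \delta^{\,d/p_j - j}\|g\|_{L^\infty(Q)},
\]
and subtracting only $\bar f_Q$ does not remove it: take $g$ a nonzero polynomial of degree between $j$ and $m-1$ (it can certainly be arranged to be mean-zero on $Q$), so $\partial^m g \equiv 0$ while $\partial^j g \not\equiv 0$. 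The issue is that this lower-order term is not small in your summation -- it dominates. On a cube in $\Omega^{[k]}$ with $\delta = 2^{-2k}$, weighting by $r^{\sigma_j}\sim 2^{-2k\sigma_j}$, using your bound $\|g\|_{L^\infty(Q)}\lesssim 2^{-k}\|f\|_{\tilde C^{1/2}}$, raising to the $p_j$-th power, and multiplying by the $\sim 2^{2k(d-1)}$ cubes per layer, the per-layer contribution of the lower-order term simplifies (using $p_j=2m/j$ and $\sigma_j=\sigma_m\theta_j-\tfrac12(1-\theta_j)$) to
\[
2^{\,4k\,(m-1-\sigma_m)}\,\|f\|_{\tilde C^{1/2}}^{p_j}.
\]
The Morrey hypothesis $m-\tfrac12-\sigma_m-\tfrac d2>0$ forces $\sigma_m < m-\tfrac12-\tfrac d2 \le m-1$ for $d\ge 1$, so the exponent is strictly positive and the geometric series in $k$ diverges for every admissible $\sigma_m$. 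Replacing $\bar f_Q$ by a degree-$(m{-}1)$ Taylor projection $P_Qf$ does not help either: then $\partial^j(P_Qf)\neq 0$ for $j<m$, and the inverse estimate for polynomials shows its contribution to $\|r^{\sigma_j}\partial^jf\|_{L^{p_j}(Q)}$ has exactly the same size as the omitted GN term. The rest of the outline -- the dyadic layers, the $\tilde C^{1/2}$ oscillation bound on each cube, the algebraic cancellation of $2^{-k}$ powers for the principal term, the reading of the hypotheses -- is sound; what breaks is precisely the cube-local treatment of the lower-order content of $f$, which needs a genuinely cross-scale mechanism rather than a purely cube-by-cube Gagliardo--Nirenberg.
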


\section{The linearized equation\label{S:Linearized}}
Consider a one-parameter family of solutions $(r_\tau, v_\tau)$ for the main system \eqref{E:r_v_eq} such that 
$\left.(r_\tau,v_\tau)\right|_{\tau=0} = (r,v)$, Then formally the 
functions $\left. \frac{d}{d \tau} (r_\tau,v_\tau)\right|_{\tau=0} = (s,w)$,
defined in the moving domain $\MovingDom_t$, will solve the corresponding linearized equation.
Precisely, a direct computation shows that, for $(s,w)$ in $\MovingDom_t$, the linearized 
equation can be written in the form
\begin{subequations}{\label{E:Equations_linearized}}
\begin{align}[left = \empheqlbrace\,]
& \Dt s + \frac{1}{\kappa} \GInvP^{ij} \partial_i r w_j + r  \GInvP^{ij} \partial_i w_j 
+ r a_1  v^i \partial_i s \,
 = f
\label{E:s_eq_linear} 
\\
& \Dt w_i + a_2  \partial_i s  = g_i,
\label{E:w_eq_linear}
\end{align}
\end{subequations}
where $f$ and $g_i$ represent perturbative terms of the form
\[
f = V_1 s + r W_1 w, \qquad g = V_2 s + W_2 w
\]
with potentials $V_{1,2}$ and $W_{1,2}$ which are linear in $\partial (r,v)$, with coefficients which are smooth functions of $r$ and $v$.

Importantly, we remark that for the above system we do not obtain or require any boundary conditions on the free boundary $\Gamma_t$. This is related to the fact that our one parameter family of solutions are not required to have the same domain, as it would be the case if one 
were working in Lagrangian coordinates.

For completeness, we also provide the explicit expressions for the potentials $V_{1,2}$ and $W_{1,2}$, though  this will not play any role in the sequel. We have 
\begin{equation*}
\begin{aligned}
V_{1} \,\, \,&=    \ \frac{v^j}{(v^0)^3}\br^{1+\frac{2}{\kappa}} \partial_j r 
        - \GInvP^{ij} \partial_i v_j 
        - r \frac{\partial \GInvP^{ij}}{\partial r}
        \partial_i v_j ,  
               \\
W_{2}^{l}\,\, &=  \ - \frac{\partial \GInvP^{ij}}{\partial v^l}
        \partial_i v_j - r a_3 \GInvP^{il} \partial_i r ,
 \\
V_{2,i}\, \, &= 
        -\frac{v^j}{(v^0)^3}\br^{1+\frac{2}{\kappa}} \partial_j v_i 
        + \frac{\partial a_2}{\partial r} \partial_i r ,
            \\
(W_{2})_i^l &=  \ - \frac{a_0}{\kappa \br}  \GInvP^{jl}
        \partial_j v_i   + \frac{\partial a_2}{\partial v^l} \partial_i r  , 
\end{aligned}
\end{equation*}
where $a_3$ is a smooth function of $(r,v)$, given by
\begin{align}
    \frac{a_0}{\kappa \br } - \frac{1}{\kappa} 
    = r a_3,\qquad  a_3 = - \frac{1}{\br} \left( \frac{1}{2} + \frac{|v|^2}{(v^0)^2} \right).
    \label{E:a_3_def_and_relation}
\end{align}
As for the other coefficients, the particular form of $a_3$ is
not relevant, but we wrote it here for completeness.

\subsection{Energy estimates and well-posedness}
We now consider the well-posedness of the linearized problem
\eqref{E:Equations_linearized} in the time dependent space 
$\H$. For the purpose of this analysis, we will view $\H$
as a Hilbert space whose squared norm plays the role of 
the energy functional for the linearized equation,
\begin{equation}
\label{E:the-inner-product-I-asked-for-1000-times}
    E_{lin}(s,w) := \norm{(s,w)}^2_{\mathcal{H}} = \int_{\MovingDomT}     r^{\frac{1-\kappa}{\kappa}}  ( s^2  
+ a_2^{-1} r \GInvP^{ij} w_i w_j )\, dx.
\end{equation}
We will use this space for both the linearized equation and its adjoint. 
Our main result here is as follows:

\begin{proposition}
Let $(r,v)$ be a solution to \eqref{E:r_v_eq}. Assume that both
$r$ and $v$ are Lipschitz continuous and
that $r$ vanishes simply on the free boundary.
Then, the linearized equations 
\eqref{E:Equations_linearized}
are well-posed in $\mathcal{H}$, and the following estimate holds for solutions
$(s,w)$ to \eqref{E:Equations_linearized}:
\begin{align}\label{ee-lin}
\left| \frac{d}{dt} \norm{(s,w)}^2_{\mathcal{H}}
\right| \lesssim B 
\norm{(s,w)}^2_{\mathcal{H}}.
\end{align}
\end{proposition}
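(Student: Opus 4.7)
The proof is a weighted energy argument. I differentiate $E_{lin} = \|(s,w)\|_\H^2$ in time using the moving-domain formula \eqref{E:Derivative_moving_domain}, substitute $\Dt s$ and $\Dt w_i$ from the linearized equations \eqref{E:Equations_linearized}, and show that everything either (a) collapses to a perfect divergence whose boundary integral vanishes because $r = 0$ on $\Gamma_t$, or (b) is pointwise controlled by $B$ times the integrand of $E_{lin}$. Once \eqref{ee-lin} is established, uniqueness is immediate from Grönwall. For existence, since the coefficients $(r,v)$ are only Lipschitz, I regularize them at scale $\varepsilon$ so that the linearized system becomes a smooth non-degenerate first-order hyperbolic system away from the boundary, solve classically for $(s_\varepsilon, w_\varepsilon)$, and pass to the limit as $\varepsilon \to 0$ using the uniform $\H$-bound and Banach-Alaoglu. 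The adjoint system has an identical structure, so a duality argument is equally available.

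\textbf{The main cancellation.} Set $\alpha = (1-\kappa)/\kappa$, so the integrand of $E_{lin}$ is $r^\alpha s^2 + a_2^{-1} r^{\alpha+1} \GInvP^{ij} w_i w_j$. Multiplying \eqref{E:s_eq_linear} by $2 r^\alpha s$ and \eqref{E:w_eq_linear} by $2 a_2^{-1} r^{\alpha+1} \GInvP^{ij} w_j$, the three ``leading'' cross-terms produced are
\[
-\tfrac{2}{\kappa} r^\alpha s\, \GInvP^{ij}(\partial_i r)\, w_j, \qquad -2 r^{\alpha+1} s\, \GInvP^{ij}\partial_i w_j, \qquad -2 r^{\alpha+1} \GInvP^{ij} w_j\, \partial_i s.
\]
Using the key identity $\partial_i(r^{1/\kappa}) = \tfrac{1}{\kappa} r^\alpha \partial_i r$ and the symmetry of $\GInvP$, their sum collapses to
\[
-2\partial_i\bigl(r^{1/\kappa} s\, \GInvP^{ij} w_j\bigr) + 2 r^{1/\kappa} s\, w_j\, \partial_i \GInvP^{ij}.
\]
The divergence integrates to zero since $r^{1/\kappa}$ vanishes on $\Gamma_t$ (recall $1/\kappa > 0$), and the remainder is bounded via Cauchy-Schwarz against $E_{lin}$ using $\|\partial \GInvP\|_{L^\infty} \lesssim B$. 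This is the structural heart of the estimate, and precisely what the weights $r^\alpha,\ r^{\alpha+1}$ in $\H$ are designed to produce.

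\textbf{Remaining terms, obstacle, and well-posedness.} The other contributions are all benign: (i) the material derivatives of the weights $r^\alpha$ and $a_2^{-1} r^{\alpha+1}\GInvP^{ij}$ are harmless because $\Dt r = -r(\GInvP^{ij}\partial_i v_j + a_1 v^i \partial_i r)$ carries an extra factor of $r$, while the rest is $O(B)$ by the chain rule in $(r,v)$; (ii) the moving-domain correction $\int \mathcal E\, \partial_i(v^i/v^0)\, dx$ is $\lesssim B\, E_{lin}$ since $v^0 \gtrsim 1$; (iii) the transport piece $-2 r^{\alpha+1} a_1 v^i s \partial_i s = -r^{\alpha+1} a_1 v^i \partial_i(s^2)$ integrates by parts cleanly, the boundary term again vanishing and the bulk term being $O(B)\, E_{lin}$; (iv) the perturbations $f = V_1 s + r W_1 w$ and $g = V_2 s + W_2 w$ are handled directly by Cauchy--Schwarz, since $V_{1,2}, W_{1,2}$ are linear in $\partial(r,v)$ hence bounded by $B$, and the extra $r$ in $f$ together with the weights in $\H$ absorb every $s$-$w$ weight mismatch. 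The principal obstacle is that the system degenerates on $\Gamma_t$: the coefficient of $\partial_i w_j$ in \eqref{E:s_eq_linear} vanishes there while that of $\partial_i s$ in \eqref{E:w_eq_linear} does not, so the standard symmetric-hyperbolic framework does not apply. The $r$-dependent weights in $\H$ symmetrize the system and absorb this degeneracy through the identity $\partial_i(r^{1/\kappa}) = \tfrac{1}{\kappa} r^\alpha \partial_i r$, which converts the would-be obstruction into exactly the divergence needed for the cancellation above.
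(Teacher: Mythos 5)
Your proof of the energy estimate \eqref{ee-lin} is essentially the paper's proof: the same multipliers $r^{\frac{1-\kappa}{\kappa}}s$ and $a_2^{-1} r^{\frac{1}{\kappa}}\GInvP^{ij}w_j$, the same key identity $\partial_i(r^{1/\kappa}) = \tfrac{1}{\kappa}r^{\frac{1-\kappa}{\kappa}}\partial_i r$ collapsing the three cross-terms into a spatial divergence plus an $O(B)$ remainder from $\partial_i\GInvP^{ij}$, the same observation that $\Dt$ on the weight is safe because $\Dt r$ carries an extra factor of $r$, and the same moving-domain correction, perturbative source bound $\|V_{1,2}\|_{L^\infty}+\|W_{1,2}\|_{L^\infty}\lesssim B$, and integration by parts for the $r a_1 v^i\partial_i s$ term. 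The only divergence is in how you close the well-posedness claim. The paper computes the adjoint explicitly with respect to the $\H$ inner product, verifies the cancellation between the bad lower-left entry of $\tilde{\mathsf{A}}^i\partial_i$ and $\tilde{\mathsf{B}}$, and then deduces existence from the backward energy estimate for the adjoint plus uniqueness from the forward estimate (a standard $L^2$ duality construction). Your primary route is to regularize the coefficients and "solve classically," but note that smoothing $(r,v)$ does not remove the degeneracy at $\Gamma_t$: the coefficient $r\GInvP^{ij}$ of $\partial_i w_j$ still vanishes on the free boundary, so the regularized system is not a uniformly symmetric-hyperbolic system in $\Omega_t$ and classical solvability is not immediate — you would either have to lift $r$ away from zero (which then perturbs the domain and the weights, and requires tracking uniformity in the energy bound and in $B$) or cut off near the boundary and impose artificial boundary conditions. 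The duality argument you mention in passing sidesteps all of this and is exactly what the paper does; I would promote it from a footnote to the main argument.
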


\begin{proof}
We first remark that $(f,g)$ are indeed perturbative terms,
as they satisfy the estimate
\begin{align}
\norm{(f, g)}_{\mathcal{H} } \lesssim
B \norm{(s, w)}_{\mathcal{H} }.
\label{E:Perturbative_terms_linearized_eq_are_good}
\end{align}
This in term follows from a trivial pointwise bound
on the corresponding potentials,
\[
\| V_{1,2} \|_{L^\infty} + \| W_{1,2}\|_{L^\infty} \lesssim B.
\]

We multiply \eqref{E:s_eq_linear} by $     r^{\frac{1-\kappa}{\kappa}}s$ and contract \eqref{E:w_eq_linear}
with $a_2^{-1} r^{\frac{1}{\kappa}}\GInvP^{ij} w_j$ to find
\begin{align}
\begin{split}
&    \frac{1}{2} r^\frac{1-\kappa}{\kappa} \Dt s^2
    + \frac{1}{\kappa} r^\frac{1-\kappa}{\kappa} \G^{ij}
    \partial_i r w_j s 
    + r^\frac{1}{\kappa} \G^{ij} \partial_i w_j s
    +\frac{1}{2} r^\frac{1}{\kappa} a_1 v^i \partial_i s^2
    = f r^\frac{1-\kappa}{\kappa} s,
    \\
 &   \frac{1}{2}a_2^{-1} r^\frac{1}{\kappa} 
    \G^{ij} \Dt(w_i w_j) + r^\frac{1}{\kappa} 
    G^{ij}w_j \partial_i s
    = a_2^{-1} r^\frac{1}{\kappa} \G^{ij} g_i w_j.
\end{split}    
\nonumber 
\end{align}
Next, we add the two equations above, noting
that the second and third terms on the LHS of the first equation
combine with the second term on the LHS of the second equation
to produce
\begin{align}
    \begin{split}
        \frac{1}{\kappa} r^\frac{1-\kappa}{\kappa} \G^{ij}
        \partial_i r w_j s 
        + r^\frac{1}{\kappa} \G^{ij} \partial_i w_j s
        +
        r^\frac{1}{\kappa} 
        G^{ij} w_j \partial_i s 
        &=
         \partial_i(r^\frac{1}{\kappa} )\G^{ij}  w_j s 
        + r^\frac{1}{\kappa} \G^{ij} \partial_i w_j s
        +
        r^\frac{1}{\kappa} G^{ij} w_j \partial_i s 
        \\
        &=
        \G^{ij} \partial_i (r^\frac{1}{\kappa} w_j s)
    \end{split}
    \nonumber 
\end{align}
This yields
\begin{align*}
    \begin{split}
    &\frac{1}{2} r^\frac{1-\kappa}{\kappa} \Dt s^2
    +\frac{1}{2}a_2^{-1} r^\frac{1}{\kappa} 
    \G^{ij} \Dt(w_i w_j) 
    +\frac{1}{2} r^\frac{1}{\kappa} a_1 v^i \partial_i s^2
    +
    \G^{ij} \partial_i ( r^\frac{1}{\kappa} w_j s) =
    f r^\frac{1-\kappa}{\kappa} s +
    a_2^{-1} r^\frac{1}{\kappa} \G^{ij} g_i w_j.
      \end{split}
    \label{E:Energy_identity}
\end{align*}
We now integrate the above identity over $\MovingDomT$, using the formula \eqref{E:Derivative_moving_domain} to produce a time derivative of the energy.  For this, we need to write the terms on the left as perfect  derivatives or material derivatives. When we do so the zero order coefficients do not cause any harm. We only need to be careful with the terms where a derivative falls on $r^\frac{1-\kappa}{\kappa}$ because this
could potentially produce a term with the wrong weight (i.e.,
one less power of $r$). However, this does not occur because
we can solve for $\Dt r$ in \eqref{E:r_eq}:
\begin{align}
    \Dt r^\frac{1-\kappa}{\kappa} 
    = \frac{1-\kappa}{\kappa} r^{\frac{1}{\kappa} -2} 
    \Dt r =
    r^\frac{1-\kappa}{\kappa} 
    O( \partial(r,v) ).
    \nonumber 
\end{align}
Using the above observations, we obtain
\begin{align}
\begin{split}
\left|
\frac{d}{dt} \norm{(s,w)}^2_{\mathcal{H}}
\right|
\lesssim  B \norm{(s,w)}^2_{\mathcal{H}} +
\norm{(s,w)}_{\mathcal{H}} \norm{(f,g)}_{\mathcal{H}} \lesssim  B \norm{(s,w)}^2_{\mathcal{H}}.
\end{split}
\nonumber
\end{align}

We now compute the adjoint equation to \eqref{E:Equations_linearized} with respect to the duality relation defined by the $\H$ inner product determined by the norm  \eqref{E:the-inner-product-I-asked-for-1000-times}. The terms
$f$ and $g$ on the RHS of \eqref{E:Equations_linearized} are linear expressions in $s$ and $r w$ and
and in $s$ and $w$, respectively, with $\partial(r,v)$ coefficients. Thus, the source terms in the adjoint equation have the same structure as the original equation.
Let us write the LHS of \eqref{E:Equations_linearized} as
\begin{align}
\Dt \begin{pmatrix}
s\\
w
\end{pmatrix}
+ \mathsf{A}^i 
\partial_i
\begin{pmatrix}
s\\
w
\end{pmatrix}
 + \mathsf{B} 
\begin{pmatrix}
s\\
w
\end{pmatrix},
\nonumber 
\end{align}
where
\begin{align}
\mathsf{A}^i = 
\begin{bmatrix}
a_1 r v^i & r \GInvP^{ij} \\
a_2 \updelta^{il} & 0_{3\times 3}
\end{bmatrix}
\nonumber
\end{align}
and 
\begin{align}
\mathsf{B} = 
\begin{bmatrix}
0_{1\times 1} &  \frac{1}{\kappa} \GInvP^{ij}\partial_i r \\
0_{3\times 1} & 0_{3\times 3}
\end{bmatrix}.
\nonumber
\end{align}
With respect to the $\mathcal{H}$ inner product, the adjoint term corresponding to 
$\mathsf{A}^i \partial_i$ is 
\begin{align}
\tilde{\mathsf{A}}^i \partial_i = 
- \begin{bmatrix}
a_1 r v^i & r \GInvP^{ij} \\
a_2 \updelta^{il} & 0_{3\times 3}
\end{bmatrix}\partial_i 
- \begin{bmatrix}
0 & \frac{1}{\kappa}\GInvP^{ij}\partial_i r \\
\frac{1}{\kappa} r^{-1} a_2 \partial_l r & 0_{3\times 3}
\end{bmatrix}
\nonumber
\end{align}
modulo terms that are linear expressions in $\tilde{s}$ and $r \tilde{w}$ 
and in $\tilde{s}$ and $\tilde{w}$ (with $\partial(r,v)$ coefficients) in the first and second components,
respectively,
where $\tilde{s}$ and $\tilde{w}$ are elements of the dual. 
Similarly, the adjoint term corresponding to $\mathsf{B}$ is
\begin{align}
\tilde{\mathsf{B}} = 
\begin{bmatrix}
0_{1 \times 1} & 0_{1\times 3} \\
\frac{1}{\kappa} r^{-1} a_2 \partial_l r & 0_{3 \times 3}
\end{bmatrix}.
\nonumber
\end{align}
Combining these expressions, we see that the bad term on the lower left corner of the second matrix in  $\tilde{\mathsf{A}}^i \partial_i$ cancels with the corresponding
terms in $\tilde{B}$. Therefore, the adjoint problem is the same as the original one, modulo perturbative terms, and it therefore admits an energy estimate similar to the energy estimate 
\eqref{ee-lin} we have for the linearized equations \eqref{E:Equations_linearized}.

In a standard fashion, the forward energy estimate for the linearized equation and the 
backward in time energy estimate for the adjoint linearized equation yield uniqueness,
respectively existence of solutions for the linearized equation, as needed.
This guarantees the well-posedness of the linearized problem.
\end{proof}

\subsection{Second order transition operators\label{S:Transition_operators}} 
An alternative approach the linearized equations is to rewrite the linearized 
equations \eqref{E:Equations_linearized} as a second order system
which captures the wave-like part of the fluid associated with the propagation of sound. Applying $\Dt$ to \eqref{E:s_eq_linear} and using
\eqref{E:w_eq_linear} and vice-versa, and ignoring perturbative terms, we find
\begin{subequations}{\label{E:Transition_operators_not_symmetric}}
\noeqref{E:Transition_s_not_symmetric,E:Transition_w_not_symmetric}
\begin{align}
&\Dt^2 s  \approx \hat{L}_1 s,
\label{E:Transition_s_not_symmetric}
\\
&\Dt^2 w_i  \approx (\hat{L}_2 w)_i,
\label{E:Transition_w_not_symmetric}
\end{align} 
\end{subequations}
where
\begin{subequations}
\begin{align}
&\hat{L}_1 s  := 
r \partial_i \left(a_2 \GInvP^{ij} \partial_j s\right)
+ \frac{a_2}{\kappa} \GInvP^{ij} \partial_i r \partial_j
 s,
\label{E:L_1_def_not_symmetric} 
\\
&(\hat{L}_2 w)_i := a_2  \left( \partial_i (r \GInvP^{ml}  \partial_m  w_l) 
+ \frac{1}{\kappa} \GInvP^{ml} \partial_m r\partial_i w_l 
\right). 
\label{E:L_2_def_not_symmetric} 
\end{align}
\end{subequations}
Equations \eqref{E:Transition_operators_not_symmetric} are akin to wave equations in that the operators $\hat{L}_1$ and $\hat{L}_2$ satisfy elliptic estimates as proved in Section \ref{S:Energy_coercivity}.
More precisely, the operator $\hat{L}_2$ is associated with the divergence part of $w$, and it  satisfies elliptic estimates once it is combined with 
a matching curl operator $\hat L_3$. 

Even though in this paper we do not use the operators $\hat L_1$ and $\hat L_2$
directly in connection to the corresponding wave equation, they nevertheless 
play an important role at two points in our proof. Because slightly different
properties of $\hat L_1$ and $\hat L_2$ are needed at these two points, 
we will take advantage of the fact that only their principal part is 
uniquely determined in order to make slightly different choices for  $\hat L_1$ and $\hat L_2$.  Precisely, these operators will be needed as follows:

\begin{enumerate}[label=\Roman*.]
    \item In the proof of our energy estimates in Section~\ref{S:Energy_coercivity}, in order to establish the coercivity of our energy functionals.  There 
we will need the coercivity of $\hat L_1$ and $\hat L_2 + \hat L_3$, but 
we also want their coefficients to involve only $r,\nabla r$ and undifferentiated $v$.

 \item In the constructive proof of existence of regular solutions, in 
 our paradifferential style regularization procedure. There we use functional calculus for both $\hat L_1$ and $\hat L_2 + \hat L_3$, so we need them to 
be both coercive and self-adjoint, but we no longer need to impose the previous 
restrictions on the coefficients.
\end{enumerate}

The two sets of requirements are not exactly\footnote{Heuristically, both would 
be fulfilled by an appropriate Weyl type paradifferential quantization, but that would be very cumbersome to use in the presence of the free boundary.} compatible, which is why two choices are needed. 

We begin with the case (I), where we modify $\hat L_1$ and $\hat L_2$ as follows:
\begin{subequations}
\begin{align}
&\tilde {L}_1 s  := 
\GInvP^{ij} a_2 \left( r\partial_i \partial_j s 
+ \frac{1}{\kappa} \partial_i r \partial_j
 s \right),
\label{E:tL_1} 
\\
&(\tilde {L}_2 w)_i := a_2 \GInvP^{ml} \left( \partial_i (r   \partial_m  w_l) 
+ \frac{1}{\kappa} \partial_m r\partial_i w_l 
\right).
\label{E:tL_2}
\end{align}
\end{subequations}

 To $\tilde L_2$ we associate an operator $\tilde L_3$ of the form
\begin{align}
     ({\tilde L}_3 w)^i := r^{-\frac{1}{\kappa}} a_2 G^{ij} \partial^l [ r^{1+\frac{1}{\kappa}} (\partial_l w_j - \partial_j w_l) ].
\label{E:L_3_tilde_def}
\end{align}

For case (II), we keep the first of the operators, setting $L_1 = \hat L_1$  but make some lower order changes to $\hat L_2$ and $\hat L_3$ as follows:
\begin{align}
({L}_2 w)_i := \partial_i\left( a_2^2 (r \partial_m 
+ \frac{1}{\kappa} \partial_m r)(a_2^{-1} \GInvP^{ml}   w_l)\right) .
\label{E:L_2_def}
\end{align}
\begin{align}
     (L_3 w)_i := r^{-\frac{1}{\kappa}} a_2 F_{ij} \partial_l [
    G^{lm} G^{jp} r^{1+\frac{1}{\kappa}} (\partial_m w_p - \partial_p w_m) ],
\label{E:L_3_tilde_def+}
\end{align}
where $\F$ is the inverse of the matrix $\G$, i.e.,
$\F = \G^{-1}$.

It is not difficult to show that $L_1$ is a self-adjoint operator in $L^2(r^\frac{1-\kappa}{\kappa})$ with respect to the inner product defined by the first component of the $\H$ norm in \eqref{E:Basic_H_norm}, and 
\[
\mathcal{D}(L_1)  = \left\{ 
f \in L^2(r^{\frac{1-\kappa}{\kappa}}) \, | \, L_1 f \in L^2(r^{\frac{1-\kappa}{\kappa}}) \text{ in the sense of distributions} 
\right\}.
\]

Similarly, both $L_2$ and $L_3$ are  self-adjoint operators in $L^2(r^\frac{1}{\kappa})$  with respect to the inner product defined by the 
second
component of the $\H$ norm in \eqref{E:Basic_H_norm}
and 
\[
\mathcal{D}(L_2)  = \left\{ 
f \in L^2( r^{\frac{1}{\kappa}} ) \, | \, L_2 f \in L^2( r^{\frac{1}{\kappa}} ) \text{ in the sense of distributions} 
\right\}.
\]
and similarly for $L_3$.
We further note that $L_2L_3= L_3L_2=0$ 
and that the output of $L_2$ is a gradient, whereas
$L_3 w$ depends only on the curl of $w$.

As seen above, it is the operator
$\hat{L}_2$ that naturally come out of the equations of motion rather
than $L_2$ (recall that $L_1 = \hat{L}_1$). Thus, we need to compare these operators; we also compare $L_1$ and $\tilde{L}_1$ for later reference.
We have
\begin{equation}{\label{E:L_hat_no_hat_relation}}
\left\{
\begin{aligned}
&(L_2 w)_i = (\hat{L}_2 w)_i 
+ \partial_i a_2 r \partial_m (\G^{ml} w_l) 
+ \partial_i (r a_2^3 \partial_m a_2^{-1} \G^{ml} w_l)
+\frac{a_2}{\kappa} \partial_i (\G^{ml} \partial_m r) w_l
\\
&L_1 s = \tilde{L}_1 s + r \partial_i(a_2 \G^{ij}) \partial_j s.
\end{aligned}
\right.
\end{equation}

We will establish coercive estimates for $L_1$, $L_2$, and
$L_3$ (see Sections \ref{S:Energy_estimates}
and \ref{S:Regular_solutions}), from which follows
that the above domains can be characterized as
\begin{align}
    \begin{split}
    \mathcal{D}(L_1) = H^{2,\frac{1+\kappa}{2\kappa}},\qquad \mathcal{D}(L_2+L_3) = H^{2,\frac{1+3\kappa}{2\kappa}}.
    \end{split}
    \nonumber 
\end{align}

\section{The uniqueness theorem\label{S:Uniqueness}}
In this Section we establish Theorem 
\ref{T:Uniqueness}. It will be a direct consequence
of the more general Theorem \ref{t:Diff} below,
which establishes that a suitable functional
that measures the difference between two solutions
is propagated by the flow.

We consider two solutions $(r_1,v_1)$ and
$(r_2,v_2)$ defined in the respective
domains $\Omega^1_t$ and $\Omega^2_t$.
Put $\MovingDomT := \Omega_t^1 \cap \Omega_t^2$, $\Gamma_t:= \partial \MovingDomT$. If the boundaries
of the domains $\Omega^1_t$ and $\Omega^2_t$ are
sufficiently close, which will be the case of 
interest here, then $\MovingDomT$ will
have a Lipschitz boundary.
Let $\Dt^1$ and $\Dt^2$ be the material derivatives associated with the domains $\Omega_t^1$ and
$\Omega_t^2$, respectively. In $\Omega_t$ define the 
averaged material derivative
\begin{align}
    \Dt := \frac{\Dt^1+\Dt^2}{2}
    \nonumber
\end{align}
and the averaged $\GInv$,
\[
\GInvP^{ij}_{mid} := \GInvP^{ij} \left(\frac{r_1+r_2}{2}, \frac{v_1+v_2}{2}\right).
\]
We note that the above averaged material derivative is not exactly advecting the domain $\Omega_t$. Fortunately exact advection is not at all needed for what follows. See also Remark~\ref{r:no-advection} below.

To  measure the difference between  two solutions on the common domain $\Omega_t$, we introduce the following distance functional:
\begin{equation}
    \label{distance functional}
    \D_{\H}((r_1,v_1), (r_2,v_2)) :=
    \int_{\Omega_t}
    (r_1+r_2)^\frac{1-\kappa}{\kappa} (
    (r_1-r_2)^2 +(r_1+r_2) |v_1-v_2|^2)\, dx
\end{equation}
which is the same as in \cite{IT-norel}.
We could have used $\GInv$ to measure
$|v_1-v_2|$, but the Euclidean metric suffices. 
This is not only because both metrics are comparable
but also because we will not use \eqref{distance functional}
directly in conjunction with the equations. We will,
however, use $\GInv$ further below when we
introduce another functional for which the structure
of the equations will be relevant.

We observe the following Lemma, which has
been proved in \cite{IT-norel}.
\begin{lemma}
Assume that $r_1$ and $r_2$ are  uniformly Lipschitz and nondegenerate, and close
in the Lipschitz topology. Then we have 
\begin{equation}\label{Diff-bdr}
 \int_{\Gamma_t} |r_1+r_2|^{\frac{1}{\kappa} +2} d\sigma \lesssim    \D_\H((r_1,v_1),(r_2,v_2)) .
\end{equation}
\end{lemma}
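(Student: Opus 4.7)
The plan is to decompose the boundary $\Gamma_t$ into the two pieces
\[
\Gamma_t^{(1)} := \{r_1 = 0\} \cap \overline{\Omega_t}, \qquad \Gamma_t^{(2)} := \{r_2 = 0\} \cap \overline{\Omega_t},
\]
and to estimate each boundary piece by a Fubini argument, integrating the right hand side of \eqref{distance functional} along a family of tubes transverse to the corresponding boundary. By symmetry it suffices to handle $\Gamma_t^{(1)}$; on it $r_1 = 0$, so $r_1 + r_2 = r_2$, and the claim reduces to
\[
\int_{\Gamma_t^{(1)}} r_2^{\,2+\frac1\kappa}\, d\sigma \,\lesssim\, \int_{\Omega_t} (r_1+r_2)^{\frac{1-\kappa}{\kappa}}(r_1-r_2)^2\,dx.
\]

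Since $r_1$ is uniformly Lipschitz with $|\nabla r_1|\geq c>0$ (and similarly $r_2$ is uniformly Lipschitz), I would introduce adapted coordinates $(s,y')$ near $\Gamma_t^{(1)}$ with $y' \in \Gamma_t^{(1)}$ and $s = r_1$ (up to a fixed multiplicative factor). The Jacobian of this change of variables is bounded above and below. To each $y'\in \Gamma_t^{(1)}$, associate the value $\rho(y') := r_2(y')>0$ and the vertical tube
\[
T(y') := \{(s,y') : 0 \leq s \leq c_0\,\rho(y')\},
\]
where $c_0$ is a small constant, chosen in terms of the Lipschitz norms so that for $(s,y')\in T(y')$ one has $r_2(s,y') \in [\tfrac12\rho(y'),\tfrac32\rho(y')]$. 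The closeness assumption on $r_1,r_2$ in the Lipschitz topology guarantees that, when $\rho(y')$ is small enough that $T(y')$ might reach near $\Gamma_t^2$, the two defining functions are comparable, so $T(y') \subset \Omega_t$ throughout.

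On the tube $T(y')$ one then has the pointwise comparisons
\[
r_1 + r_2 \approx \rho(y'), \qquad |r_1 - r_2| = r_2 - s \approx \rho(y'),
\]
whence
\[
(r_1+r_2)^{\frac{1-\kappa}{\kappa}}(r_1-r_2)^2 \approx \rho(y')^{\,1+\frac{1}{\kappa}}.
\]
Integrating in $s$ over $T(y')$ contributes a factor of length $\approx \rho(y')$, and produces
\[
\int_0^{c_0\rho(y')}(r_1+r_2)^{\frac{1-\kappa}{\kappa}}(r_1-r_2)^2\,ds \;\gtrsim\; \rho(y')^{\,2+\frac{1}{\kappa}}.
\]
The main point I would take care to verify is that the tubes $\{T(y')\}_{y'\in\Gamma_t^{(1)}}$ are disjoint (or have bounded overlap) so that summing them up via Fubini is legitimate; this follows from the fact that $s\mapsto(s,y')$ is a bilipschitz parametrization and from the lower bound on $|\nabla r_1|$. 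Integrating in $y'$ and using that the induced surface measure on $\Gamma_t^{(1)}$ is comparable to $dy'$ yields the desired bound. The analogous argument on $\Gamma_t^{(2)}$, using $r_2$ as transverse coordinate, completes the proof.

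The main obstacle will be the ``corner'' region where $\rho(y')\to 0$, i.e. where $\Gamma_t^{(1)}$ meets $\Gamma_t^{(2)}$: the tubes degenerate and one must be careful that the comparison constants do not blow up. Fortunately the integrand on the boundary vanishes there at the rate $\rho^{2+1/\kappa}$, and the Lipschitz-closeness hypothesis forces the two boundaries to be graphs over each other with small slope, so a uniform choice of $c_0$ works throughout and the corner contributes a negligible amount.
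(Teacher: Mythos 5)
The paper does not prove this lemma itself; it simply records it and cites \cite{IT-norel} for the proof, so there is no in-paper argument to compare against. Your tube-and-Fubini proof is correct and is the natural approach: the exponent bookkeeping works out exactly, since on a tube of length $\approx \rho(y')$ one has $(r_1+r_2)^{\frac{1-\kappa}{\kappa}}(r_1-r_2)^2 \approx \rho^{\frac1\kappa+1}$, whose $ds$-integral is $\approx \rho^{\frac1\kappa+2}$, matching the boundary integrand $r_2^{\frac1\kappa+2}$ on $\Gamma_t^{(1)}$; the nondegeneracy $|\nabla r_1|\geq c$ makes $s=r_1$ a bilipschitz transverse coordinate with bounded Jacobian, so the tubes have bounded overlap and Fubini applies; and the choice $r_2\geq\frac12\rho(y')>0$ on $T(y')$ guarantees $T(y')\subset\Omega_t$ without any further appeal to Lipschitz closeness. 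The only step worth phrasing more carefully is the corner remark: Lipschitz closeness of $r_1,r_2$ controls $\|r_1-r_2\|_{C^0}$ and, via nondegeneracy, the Hausdorff distance between the two boundaries, but this is not needed for the tube estimate itself — it is used only to ensure $\Omega_t$ has Lipschitz boundary so $d\sigma$ is a well-defined surface measure. As you note, the corner set $\{r_1=r_2=0\}$ contributes nothing since both the boundary integrand and the tube length vanish there.
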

One can view the integral in \eqref{Diff-bdr} as a measure of the distance between 
the two boundaries, with the same scaling as $\D_\H$. 

We now state our main estimate for differences of solutions:

\begin{theorem}\label{t:Diff}
Let $(r_1,v_1)$ and $(r_2,v_2)$ be two solutions for the system \eqref{E:r_v_eq} in $[0,T]$, with regularity $\nabla r_j \in \tilde{C}^\frac{1}{2}$, $v_j \in C^1$, so that $r_j$ are uniformly nondegenerate near the boundary and close in the Lipschitz topology, $j=1,2$. Then
we have the uniform difference bound
\begin{equation}\label{Diff-est}
\sup_{t \in [0,T]} \D_\H((r_1,v_1)(t),(r_2,v_2)(t))    \lesssim \D_\H((r_1,v_1)(0),(r_2,v_2)(0)). \end{equation}
\end{theorem}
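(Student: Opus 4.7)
The approach I would take is to view $D_\H((r_1,v_1),(r_2,v_2))$ as essentially the $\H$-norm of the difference $(s,w) := (r_1-r_2, v_1-v_2)$ taken with averaged weights $(r_1+r_2)^{\frac{1-\kappa}{\kappa}}$ in place of $r^{\frac{1-\kappa}{\kappa}}$, and then to run the linearized energy estimate of the previous section on the symmetrized difference system. Concretely, subtracting the two copies of \eqref{E:r_v_eq} and rewriting the result in terms of the averaged material derivative $\Dt = \tfrac12(\Dt^1+\Dt^2)$ and the averaged metric $\GInv^{ij}_{mid}$, the pair $(s,w)$ satisfies a system of exactly the same shape as the linearized system \eqref{E:Equations_linearized}, with coefficients evaluated at the average $\bigl(\tfrac{r_1+r_2}{2},\tfrac{v_1+v_2}{2}\bigr)$ and with perturbative sources $(f,g)$ of the form $f = V_1 s + (r_1+r_2) W_1 w$, $g = V_2 s + W_2 w$, where the potentials $V_{1,2}$, $W_{1,2}$ are pointwise bounded by $\|\nabla(r_j, v_j)\|_{L^\infty}$ for $j=1,2$.

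Next, I would mimic the proof of \eqref{ee-lin}: multiply the $s$-equation by $(r_1+r_2)^{\frac{1-\kappa}{\kappa}} s$ and contract the $w$-equation with $a_{2,mid}^{-1}(r_1+r_2)^{\frac{1}{\kappa}}\GInv^{ij}_{mid} w_j$. The key algebraic step from the linearized analysis, namely the recombination
\[
\frac{1}{\kappa}(r_1+r_2)^{\frac{1-\kappa}{\kappa}} \GInv^{ij}_{mid}\partial_i(r_1+r_2) w_j s + (r_1+r_2)^{\frac{1}{\kappa}}\GInv^{ij}_{mid}(\partial_i w_j\, s + w_j\partial_i s) = \GInv^{ij}_{mid}\partial_i\bigl((r_1+r_2)^{\frac{1}{\kappa}} w_j s\bigr),
\]
still takes place, producing a perfect divergence. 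Integrating over the common domain $\Omega_t = \Omega^1_t\cap\Omega^2_t$ and applying the Reynolds-type formula \eqref{E:Derivative_moving_domain} (for the average velocity) then yields
\[
\frac{d}{dt} D_\H \lesssim \bigl(\|\nabla r_1\|_{\tilde C^{1/2}} + \|\nabla r_2\|_{\tilde C^{1/2}} + \|v_1\|_{C^1} + \|v_2\|_{C^1}\bigr) D_\H + \mathrm{Bdry},
\]
where $\mathrm{Bdry}$ denotes the surface contribution on $\Gamma_t = \partial\Omega_t$ coming both from the divergence theorem applied to the recombined term above and from the motion of $\Gamma_t$.

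The main obstacle is to show that $\mathrm{Bdry}$ is either absent or absorbable into $D_\H$. The boundary $\Gamma_t$ decomposes into a piece sitting inside $\{r_1 = 0\}$ and another inside $\{r_2 = 0\}$; on the first piece $s = -r_2$ and on the second $s = r_1$, while the weight $(r_1+r_2)^{\frac{1-\kappa}{\kappa}}$ carries the degeneracy. When the power counting is carried out, each surviving surface term ends up as a positive multiple of
\[
\int_{\Gamma_t}(r_1+r_2)^{\frac{1}{\kappa}+\alpha}(1 + |w|^2)\,d\sigma, \qquad \alpha\geq 2,
\]
possibly with a sign determined by the direction of motion of the two boundaries relative to each other. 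The favorable signs follow from the physical vacuum condition $\partial r_j \cdot \nu < 0$ at $\Gamma^j_t$, while the unsigned pieces are controlled by \eqref{Diff-bdr} together with the Lipschitz closeness of $r_1$ and $r_2$ (which gives $r_2 \lesssim |r_1 - r_2|$ on the $\{r_1=0\}$ piece of $\Gamma_t$, providing the extra power of $(r_1+r_2)$ needed). Combining this with the bulk bound and applying Gronwall's inequality produces \eqref{Diff-est}. I note that since $D_\H$ controls the measure of the symmetric difference of the two domains (by \eqref{Diff-bdr}), uniqueness from Theorem~\ref{T:Uniqueness} follows immediately by setting the initial data equal and concluding $D_\H \equiv 0$.
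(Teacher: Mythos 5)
Your outline has two genuine gaps, and they are exactly the points the paper's proof is engineered to avoid. First, the bulk structure: it is not true that the difference $(s,w)=(r_1-r_2,v_1-v_2)$ satisfies a system ``of exactly the same shape'' as \eqref{E:Equations_linearized} with sources $f=V_1s+(r_1+r_2)W_1w$. Subtracting the two copies of \eqref{E:r_v_eq} produces in the $s$-equation the term $(\Dt^1-\Dt^2)(r_1+r_2)=(\tilde v_1-\tilde v_2)\cdot\nabla(r_1+r_2)$, which is of the form $W\,w$ with \emph{no} factor of $r_1+r_2$; since the $\H$-weight for $w$ carries an extra power of $r$ relative to the weight for $s$, such a term is not perturbative and cannot be absorbed by Gronwall. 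The paper deals with it through the specific cancellation in the integral $I_3$: after integrating the $b\,\GInvP_{mid}(v_1-v_2)\partial(r_1-r_2)$ term by parts, one pairs it with $(\tilde v_1-\tilde v_2)\cdot\nabla(r_1+r_2)$ and Taylor-expands $\tilde v_1-\tilde v_2$ about the midpoint $(v_1+v_2)/2$, which kills the quadratic term and leaves $O(|r_1-r_2|+(r_1+r_2)|v_1-v_2|+|v_1-v_2|^3)$. Even then, the cubic term $J_2=\int (r_1+r_2)^{\frac{1-\kappa}{\kappa}}|v_1-v_2|^3$ and the terms $J_1^a$--$J_1^d$ are \emph{not} Gronwall terms: they require the interpolation and boundary-layer-peeling estimates of Lemmas~4.4--4.13 of \cite{IT-norel}. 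Your proposal has no counterpart of any of this.

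Second, the boundary terms. The paper explicitly does not differentiate $D_\H$ in time; it introduces the degenerate functional $\tilde D_\H$ in \eqref{Diff}, whose cutoff weights $a,b$ are supported in $\{|r_1-r_2|<\tfrac12(r_1+r_2)\}$ and hence vanish identically near $\Gamma_t=\partial(\Omega^1_t\cap\Omega^2_t)$, where one of the $r_j$ vanishes. This is precisely what makes all integrations by parts free of boundary contributions; the equivalence $D_\H\approx_A\tilde D_\H$ (Lemma~\ref{l:D-equiv}) then transfers the bound back. In your version the surface terms do not go away: \eqref{Diff-bdr} controls only $\int_{\Gamma_t}(r_1+r_2)^{\frac1\kappa+2}d\sigma$, not weighted traces of $|v_1-v_2|^2$ or $|v_1-v_2|^3$, and at the regularity of Theorem~\ref{t:Diff} there is no trace inequality bounding such surface integrals by the bulk quantity $D_\H$; the appeal to ``favorable signs from the physical vacuum condition'' is not substantiated, and the identity $r_2=|r_1-r_2|$ on $\{r_1=0\}$ gives no extra gain. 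In addition, the two portions of $\Gamma_t$ move with $\tilde v_1$ and $\tilde v_2$ respectively, not with the averaged velocity, so applying \eqref{E:Derivative_moving_domain} with the average generates further uncontrolled flux terms of the same type. The fix is the paper's: replace $D_\H$ by the degenerate $\tilde D_\H$ before differentiating, and then carry out the $I_1$--$I_6$ decomposition with the midpoint cancellation and the interpolation estimates for the $J$-integrals.
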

We remark that 
\[
\D_\H((r_1,v_1),(r_2,v_2)) = 0 \quad \text{iff} \quad (r_1,v_1)= (r_2,v_2),
\]
which implies our uniqueness result.

The remaining of this Section is dedicated to proving
Theorem \ref{t:Diff}.

\subsection{A degenerate energy functional}
We will not work directly with the functional
$D_\H$ because it is non-degenerate, so 
we cannot take full advantage of integration by parts
when we compute its time derivative. We thus 
consider the modified difference functional
\begin{align}
\label{Diff}
\begin{split}
 \tD_\H((r_1,v_1),(r_2,v_2)) & := \int_{\Omega_t} 
 (r_1+r_2)^{\frac{1-\kappa}{\kappa}}
\big(
 a (r_1-r_2)^2 
 \\
 &\ \  \qquad \  + b(a_{21}+a_{22})^{-1} \GInvP^{ij}_{mid} (v_1-v_2)_i (v_1-v_2)_j \big)\, dx,
 \end{split}   
\end{align}
where $a_{21}$ and $a_{22}$ are the 
coefficient $a_2$ corresponding to the solutions 
$(r_1,v_1)$ and $(r_2,v_2)$, respectively, and
$a$ and $b$ are functions of
$\mu := r_1+r_2$ and $\nu=r_1-r_2$ with the
following properties
\begin{enumerate}
\item They are smooth, nonnegative  functions in the region $\{ 0 \leq |\nu| < \mu\}$,
even in $\nu$, and homogeneous of degree $0$, respectively $1$.

\item They are connected by the relation $\mu a =  b$.

\item They are supported in $\{ |\nu| < \frac12 \mu\}$, with $a = 1$ in $\{|\nu| < \frac14 \mu \}$.
\end{enumerate}

\begin{remark}\label{r:no-advection}
The choice of the weights $a$ and $b$ above guarantees that the integrand in \eqref{Diff}
above vanishes polynomially on the boundary of the common domain $\Omega_t$.
This is why we refer to this difference functional as degenerate, and
is also the reason why we are able to use the averaged material derivative $D_t$ to propagate bounds for $\tD_\H$ in time even though  it does not exactly advect $\Omega_t$. 
\end{remark}

From \cite{IT-norel} we also borrow the equivalence property of the two distance functionals defined above:
\begin{lemma}\label{l:D-equiv}
Assume that $A = A_1+A_2$ is small. Then
\begin{equation}\label{D-equiv}
  \D_\H((r_1,v_1),(r_2,v_2)) \approx_A  \tD_\H((r_1,v_1),(r_2,v_2)) .
\end{equation}
\end{lemma}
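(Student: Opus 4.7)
\textbf{Proof plan for Lemma~\ref{l:D-equiv}.}

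The bound $\tilde D_\H \lesssim D_\H$ is the easy direction. Since $a$ and $\mu^{-1} b$ are bounded on $\{|\nu|<\mu\}$ by the homogeneity/regularity assumptions, and since the averaged matrix $\GInvP^{ij}_{mid}$ is uniformly equivalent to $\updelta^{ij}$ with constants depending only on the $L^\infty$ norms of $(r_j,v_j)$ (which are controlled when $A$ is small), the integrand of $\tilde D_\H$ is pointwise dominated by a constant multiple of the integrand of $D_\H$. Also $(a_{21}+a_{22})^{-1}$ is smooth in $(r,v)$ and bounded. So this direction is essentially algebraic.

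For the harder direction $D_\H \lesssim_A \tilde D_\H$, split $\MovingDomT = E^c \cup E$, where
\[
E := \{ x \in \MovingDomT : |\nu(x)| \geq \tfrac14 \mu(x)\}.
\]
On $E^c$, by property (3) the cutoffs satisfy $a = 1$ and $b = \mu$, and $\GInvP^{ij}_{mid}$ is equivalent to the Euclidean metric, so the two integrands agree up to multiplicative constants. Hence the contribution of $E^c$ to $D_\H$ is already bounded by $\tilde D_\H$. The whole task is therefore to control the $E$-part of $D_\H$ by $\tilde D_\H$.

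On $E$, without loss of generality $r_1 \geq r_2$, so $r_1 - r_2 \geq \mu/4$ forces $r_2 \leq 3r_1/5$, i.e.\ the point lies in a layer near the boundary $\Gamma_t^2$ of $\Omega_t^2$ where $r_2$ is small relative to $r_1$. The plan here is a tube-based Hardy-type argument. The smallness of $A$ gives two facts: first, $|\nabla r_j - N|$ is small, so $|\nabla r_j| \geq c > 0$ for some fixed $c$; second, $|\nabla(r_1 - r_2)| = |\nabla \nu| \lesssim A$ is small. Combined, along integral curves of $\nabla r_2$, $r_2$ grows linearly in arclength while $\nu$ varies slowly. Thus every $x \in E$ is connected by such a curve of length comparable to $\mu(x)$ to a point in $\{|\nu| < \mu/8\} \subset E^c$, and along the curve $\nu$ is essentially constant. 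Changing variables to the flow coordinates and integrating the weight $(r_1+r_2)^{\frac{1-\kappa}{\kappa}}$ (the exponent satisfies $\frac{1-\kappa}{\kappa} > -1$, which is precisely what makes the integral converge and a one-dimensional Hardy bound applicable), one obtains
\[
\int_E (r_1+r_2)^{\frac{1-\kappa}{\kappa}}(r_1-r_2)^2 \, dx \lesssim_A \int_{E^c \cap \{|\nu|<\mu/8\}} (r_1+r_2)^{\frac{1-\kappa}{\kappa}} (r_1-r_2)^2 \, dx \lesssim \tilde D_\H,
\]
and the analogous estimate, with weight $(r_1+r_2)^{\frac{1-\kappa}{\kappa}+1}$, for the velocity component. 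The two halves combine to give $D_\H^E \lesssim_A \tilde D_\H$, completing the proof.

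The only nontrivial step is the tube estimate on $E$. The key quantitative inputs are the uniform non-degeneracy $|\nabla r_j|\gtrsim 1$ and the smallness of $|\nabla \nu|$; both are ensured by $A\ll 1$, which is why this hypothesis cannot be dropped. As noted in the excerpt, this is the argument already carried out in \cite{IT-norel} for the non-relativistic analogue, and because the functional $D_\H$ and the cutoffs $a, b$ are defined identically here, the same proof transfers verbatim to our setting.
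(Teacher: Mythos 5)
The paper itself contains no proof of this lemma: it is imported from \cite{IT-norel} (``From \cite{IT-norel} we also borrow the equivalence property\ldots''), the only new features here being the bounded, uniformly elliptic factors $(a_{21}+a_{22})^{-1}\GInvP^{ij}_{mid}$, which, as you correctly note, only affect constants. Your reconstruction of the underlying argument is the right one in spirit: the direction $\tilde D_\H \lesssim D_\H$ is pointwise, and the hard direction is a boundary-layer transfer from $E=\{|\nu|\ge \mu/4\}$, where the cutoffs degenerate, to the adjacent region where $a=1$, using that $|\nabla \nu|\le A$ (both $\nabla r_1,\nabla r_2$ lie within $A_j$ of the same $N$) together with nondegeneracy. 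For the density part this closes exactly as you say: on $E$ one has $\mu\approx|\nu|$, $\nu$ is essentially constant along a transfer segment of length $\approx|\nu|$, and the exponent $\frac{1-\kappa}{\kappa}>-1$ makes the one-dimensional weighted comparison work.

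There is, however, a genuine gap in the velocity half. The quantitative inputs you list --- nondegeneracy of $\nabla r_j$ and smallness of $\nabla\nu$ --- give no control on the variation of $v_1-v_2$ along the transfer curves, and on $E$ the weight $b\,\GInvP^{ij}_{mid}$ in $\tilde D_\H$ vanishes, so there is nothing local to compare $\int_E \mu^{1/\kappa}|v_1-v_2|^2\,dx$ against; ``the analogous estimate'' is therefore not analogous as stated. What rescues it is the other component of $A$: since $\|v_1-v_2\|_{\dot C^{1/2}}\lesssim A$, along a segment of length $\approx \mu$ one gets $|v_1-v_2|^2(x)\lesssim |v_1-v_2|^2(y)+A^2\mu$; the first term transfers to the adjacent region where $b\approx\mu$, while the error contributes $A^2\int_E \mu^{1+\frac{1}{\kappa}}\,dx \approx A^2\int_E \mu^{\frac{1-\kappa}{\kappa}}\nu^2\,dx$, which is absorbed using the density bound you already proved. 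Two smaller points: you should run the transfer along straight lines in the fixed direction $N$ (along which $\partial_N r_j\ge 1-A$) rather than along integral curves of $\nabla r_2$, since at the regularity assumed here $\nabla r_2$ is not Lipschitz and the flow map and its Jacobian are not under control; and the ``verbatim transfer'' from \cite{IT-norel} is indeed legitimate precisely because the extra coefficients are smooth, bounded, and uniformly elliptic, which you did address.
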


\medskip

\subsection{The energy estimate}
To prove the Theorem~\ref{t:Diff} it remains to track the time evolution of the degenerate distance functional $\tD_{\H}$. This is the content of
the next result, which  
immediately implies Theorem~\ref{t:Diff}
after an application of 
Gronwall's inequality.

\begin{proposition}
We have
\begin{equation}\label{Diff-est0}
\frac{d}{dt} \tD_\H  ((r_1,v_1),(r_2,v_2)) \lesssim (B_1+B_2) \D_\H  ((r_1,v_1),(r_2,v_2)).
\end{equation}
\end{proposition}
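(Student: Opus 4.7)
\smallskip

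\textbf{Proof plan.} The strategy is to track the time evolution of $\tilde{D}_\H$ by deriving equations for the difference variables $s:=r_1-r_2$ and $w_i:=(v_1-v_2)_i$ on the common domain $\Omega_t$, with respect to the averaged material derivative $\Dt$. Writing $\Dt=\Dt^j+(v_{\mathrm{mid}}-v_j)\cdot\nabla$ for $j=1,2$ and taking differences of \eqref{E:r_v_eq} for the two solutions, one obtains, modulo remainders that are pointwise bounded by $(B_1+B_2)(|s|+\mu^{1/2}|w|)$ (with $\mu=r_1+r_2$), a system of the schematic form
\begin{align*}
\Dt s &= -\tilde{A}^{ij}\,\partial_i w_j + \mathcal R_s,\\
\Dt w_i &= -\tilde{a}_2\,\partial_i s + \mathcal R_{w,i},
\end{align*}
where $\tilde{A}^{ij}=\tfrac{1}{2}(r_1 G_1^{ij}+r_2 G_2^{ij})$ agrees with $\tfrac{\mu}{2}\GInvP^{ij}_{\mathrm{mid}}$ up to a perturbative term proportional to $(|s|+\mu|w|)$, and $\tilde{a}_2=\tfrac{1}{2}(a_{21}+a_{22})$.

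Next, I would apply the moving-domain formula \eqref{E:Derivative_moving_domain} with the averaged velocity $v_{\mathrm{mid}}^i/v_{\mathrm{mid}}^0$ to differentiate $\tilde D_\H$ in time. The derivative hits (a) the weights $\mu^{(1-\kappa)/\kappa}$, $a$ and $b$, (b) the coefficients $G^{ij}_{\mathrm{mid}}$ and $(a_{21}+a_{22})^{-1}$, and (c) the squared quantities $s^2$ and $\GInvP^{ij}_{\mathrm{mid}}w_iw_j$. Contributions of type (a) and (b), as well as the Jacobian term from the moving-domain formula, are bounded by $(B_1+B_2)\tilde D_\H$ using $|\Dt\mu|+|\partial\mu|+|\Dt\nu|+|\partial G_{\mathrm{mid}}|+|\Dt a_{2j}|\lesssim B_1+B_2$, together with the homogeneity of $a$ and $b$. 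The genuinely dangerous contributions come from (c), producing the two wave-type terms
\[
-\int_{\Omega_t}2\mu^{\frac{1-\kappa}{\kappa}}a\,s\,\tilde{A}^{ij}\partial_i w_j\,dx
\quad\text{and}\quad
-\int_{\Omega_t}\mu^{\frac{1-\kappa}{\kappa}}\,b\,\GInvP^{ij}_{\mathrm{mid}}\,w_i\,\partial_j s\,dx,
\]
where the factor $2\tilde{a}_2/(a_{21}+a_{22})=1$ has simplified in the second term.

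The crux is the cancellation of these two terms. Using $\tilde{A}^{ij}=\tfrac{\mu}{2}\GInvP^{ij}_{\mathrm{mid}}+O(|s|+\mu|w|)$ and the defining relation $\mu a=b$, both wave-type terms carry the common coefficient $\mu^{\frac{1}{\kappa}}a\,\GInvP^{ij}_{\mathrm{mid}}$. Invoking the symmetry of $\GInvP^{ij}_{\mathrm{mid}}$, their sum collapses to a perfect spatial divergence:
\[
-\int_{\Omega_t}\mu^{\frac{1}{\kappa}}a\,\GInvP^{ij}_{\mathrm{mid}}\,\partial_i(s\,w_j)\,dx \;+\;(\text{perturbative remainder}).
\]
The support property $\operatorname{supp}(a)\subset\{|\nu|<\mu/2\}$ guarantees that $a$ vanishes on $\Gamma_t=\partial\Omega_t$ (where necessarily $|\nu|=\mu$), so integration by parts produces no boundary contribution, yielding
\[
\int_{\Omega_t}\partial_i\!\left(\mu^{\frac{1}{\kappa}}a\,\GInvP^{ij}_{\mathrm{mid}}\right)s\,w_j\,dx.
\]

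It remains to check that this remainder, together with all the contributions coming from $\mathcal R_s,\mathcal R_w$ and from the perturbative piece of $\tilde A^{ij}-\tfrac{\mu}{2}\GInvP^{ij}_{\mathrm{mid}}$, is bounded by $(B_1+B_2)\tilde D_\H$. Since $\mu^{1/\kappa}a$ is homogeneous of degree $1/\kappa$ in $(\mu,\nu)$ and $\GInvP^{ij}_{\mathrm{mid}}$ is Lipschitz with norm $\lesssim B_1+B_2$, one has $|\partial_i(\mu^{1/\kappa}a\,\GInvP^{ij}_{\mathrm{mid}})|\lesssim(B_1+B_2)\mu^{1/\kappa-1}$ on $\operatorname{supp}(a)$, where $\mu\sim r_1\sim r_2$; Cauchy–Schwarz with the weight $\epsilon=\mu$ in the form $|sw_j|\le\tfrac{s^2}{2\mu}+\tfrac{\mu|w_j|^2}{2}$, together with the comparability of $\mu^{1/\kappa-1}s^2$ and $\mu^{1/\kappa}|w|^2$ with the integrand of $\tilde D_\H$ on $\operatorname{supp}(a)$, then yields the desired bound. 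Combining this with the trivial bounds for types (a), (b), the Jacobian, and the perturbative remainders (which satisfy $|\mathcal R_s|+|\mathcal R_w|\lesssim(B_1+B_2)(|s|+\mu^{1/2}|w|)$ and thus contribute at the level of $\tilde D_\H$ after weighted Cauchy–Schwarz), and finally invoking Lemma~\ref{l:D-equiv} to replace $\tilde D_\H$ by $D_\H$, one obtains \eqref{Diff-est0}. The main obstacle is the handling of the weighted integration-by-parts remainder and of the $\mathcal R_s,\mathcal R_w$ terms near the boundary, where careful use of the support of $a$, the relation $\mu a=b$, and the comparability $\mu\sim r_1\sim r_2$ on $\operatorname{supp}(a)$ is essential to absorb everything into $(B_1+B_2)D_\H$.
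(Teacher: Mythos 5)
Your proof follows the same skeleton as the paper's (energy identity for $\tilde D_\H$, the relation $b=\mu a$, integration by parts in the cross term, the support property of $a$, and Lemma \ref{l:D-equiv} at the end), but the two pointwise claims on which you close the estimate are false, and they hide exactly the hard part of the argument. First, the remainders produced by the difference of material derivatives, $(\tilde v_1-\tilde v_2)\cdot\nabla(r_1\pm r_2)$, have size $|v_1-v_2|\,|\nabla(r_1\pm r_2)|$, and near the boundary $|\nabla(r_1+r_2)|\approx 1$ by nondegeneracy: there is neither a factor $B_1+B_2$ nor a factor $\mu^{1/2}$, so the asserted bound $|\mathcal{R}_s|\lesssim (B_1+B_2)(|s|+\mu^{1/2}|w|)$ fails, and the resulting contribution $\int \mu^{\frac{1-\kappa}{\kappa}} a\,|s|\,|w|\,dx$ is not controlled by $D_\H$, whose $w$-component carries an extra power of $\mu$. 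In the paper this term is not treated as perturbative at all: it is paired with the term generated when the integration by parts puts the derivative on $\mu^{1/\kappa}$, and only after Taylor expanding $\tilde v_1-\tilde v_2$ about the midpoint $(v_1+v_2)/2$ and using \eqref{E:a_3_def_and_relation} do the linear-in-$(v_1-v_2)$ pieces cancel; what survives is $O(|r_1-r_2|+\mu|v_1-v_2|+|v_1-v_2|^3)$, and the cubic piece produces the integral $J_2$, which is handled by interpolation, not pointwise.

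Second, the claimed bound $|\partial_i(\mu^{1/\kappa} a\, \GInvP^{ij}_{mid})|\lesssim (B_1+B_2)\mu^{1/\kappa-1}$ is also false: $\partial a=a_\mu\,\partial\mu+a_\nu\,\partial\nu$ with $a_\mu,a_\nu$ homogeneous of degree $-1$, while $\nabla\mu,\nabla\nu$ are of unit size and are not bounded by $B_1+B_2$ (which is small after localization). So the integration-by-parts remainder is genuinely of size $\int \mu^{1/\kappa-1}|s||w|\,dx$, and your weighted Cauchy--Schwarz produces $\int \mu^{1/\kappa-2}s^2\,dx$, which has one more negative power of $\mu$ than the $s$-part of $D_\H$ and cannot be absorbed; the fact that $|s|\lesssim\mu$ on $\operatorname{supp}a$ only converts it into a term linear in $|s|$, which is still not Gronwall-admissible. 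This is precisely the paper's integral $J_1^c$, and the same objection applies to the terms where the derivative hits $a$, $b$, or $\mu^{(1-\kappa)/\kappa}$ (the paper's $J_1^a$, $J_1^b$, $J_1^d$), which you declare to be direct Gronwall terms. The paper explicitly flags these as the delicate contributions: they are estimated by peeling off a carefully chosen boundary layer and invoking the interpolation inequalities of Section \ref{S:Embedding_and_interpolation}, using the $\tilde{C}^{1/2}$ regularity of $\nabla r_j$ and the $C^1$ regularity of $v_j$, following \cite{IT-norel}. Without the midpoint cancellation and without this boundary-layer/interpolation treatment of $J_1^a$--$J_1^d$ and $J_2$, the estimate does not close, so the proposal as written has a genuine gap.
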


\begin{proof}

The difference of the two solutions to \eqref{E:r_v_eq} in the common domain $\MovingDomT$ satisfies
\begin{equation}
\label{eq1}
    \begin{aligned}
 2 \Dt (r_1 - r_2) = & 
- (\Dt^1 - \Dt^2) (r_1 + r_2) 
- (r_1 (\GInv_1)^{ij} + r_2 (\GInv_2)^{ij} )\partial_i ( (v_1)_j - (v_2)_j )
\\
& 
- (r_1 (\GInv_1)^{ij} - r_2 (\GInv_2)^{ij} )\partial_i ( (v_1)_j + (v_2)_j )
\\
&
-
(r_1 a_{11} v_1^i + r_2 a_{12} v_2^i )\partial_i (r_1 - r_2)
\\
& 
-
(r_1 a_{11} v_1^i - r_2 a_{12} v_2^i )\partial_i (r_1 + r_2) ,
\end{aligned}
\end{equation}
and
\begin{equation}
\label{eq2}
    \begin{aligned}
 2\Dt( (v_1)_i - (v_2)_i ) = &  
- (\Dt^1 - \Dt^2 )( (v_1)_i + (v_2)_i )
- (a_{2,1} + a_{2,2}) \partial_i (r_1 - r_2)
\\
&
- (a_{21} - a_{22}) \partial_i (r_1 + r_2).
\end{aligned}
\end{equation}
Above, $\GInv_i$ and $a_{1i}$ correspond
to $\GInv$ and $a_1$ for the solutions
$(r_i,v_i)$, $i=1,2$.
We observe that the difference of material 
derivatives can be written as 
\[
(\Dt^1 - \Dt^2 ) = \left(\tilde v_1 - \tilde v_2 \right) \cdot \nabla,
\qquad \tilde v^i = \frac{v^i}{v^0}.
\]

To compute the time derivative of the degenerate distance
we use the standard formula for differentiation in a moving domain $\Omega_t$,
\begin{equation}
    \frac{d}{dt}\int_{\Omega_t} f(t, x)\,dx= \int_{\Omega_t} D_tf + f \nabla \cdot v (t)\, dx,
\end{equation}
where $v$ is in our case the average velocity. Classically this holds under the assumption that the domain $\Omega_t$ is advected by $D_t$. But in our case we replace this assumption
with the alternative condition that $f$ vanishes on the boundary of $\Omega_t$. Using this formula we obtain
\begin{equation}
\begin{aligned}
    \frac{d}{dt}\tD_{\H}(t)&=I_1+I_2+I_3+I_4+I_5
    +I_6+O(B_1+B_2)D_{\H}(t),
    \end{aligned}
    \nonumber 
\end{equation}
where the integrals $I_i$, $i=\overline{1,6}$, represent contributions as follows:
\medskip

a) $I_1$ represents the contribution where the averaged material derivative  falls on $a$ or $b$,
\begin{equation}
\begin{aligned}
    I_1= & \int_{\Omega_t} 
    (r_1+r_2)^\frac{1-\kappa}{\kappa} [a_\mu (r_1-r_2)^2 + b_\mu (a_{21}+a_{22})^{-1} \GInvP^{ij}_{mid} (v_1-v_2)^2\,] D_t(r_1+r_2) \, dx
    \\
   &  + \int_{\Omega_t}(r_1+r_2)^\frac{1-\kappa}{\kappa} [a_\nu (r_1-r_2)^2 + b_\nu (a_{21}+a_{22})^{-1} \GInvP^{ij}_{mid} (v_1-v_2)^2\,] D_t(r_1-r_2) \, dx.
    \end{aligned}
    \nonumber 
\end{equation}
Here the derivatives of $a$ and $b$ are homogeneous of order $-1$, respectively $0$. We get Gronwall terms when they  get coupled with factors of $r_1+r_2$ or $r_1-r_2$ from the material derivatives. We discuss 
$I_1$ later.

\medskip

b) $I_2$ gathers the contributions where the averaged material derivative is applied to  $(a_{21}+a_{22})^{-1}$ and to $\GInvP^{ij}_{mid}$. These expressions are smooth functions of $(r_1, v_1), (r_2,v_2)$, and thus their derivatives are bounded by $B_1+B_2$, 
\begin{equation}
\begin{aligned}
    I_2=O(B_1+B_2)D_{\H}(t).
    \end{aligned}
    \nonumber
\end{equation}

c) $I_3$ represents the main contribution  of the averaged material derivative that falls onto $(r_1-r_2)$ respectively on $v_1-v_2$ which consists of the first and second terms 
on the RHS
in \eqref{eq1}, and the second term in \eqref{eq2}:
\begin{equation}
\begin{aligned}
    I_3=&-\int_{\Omega_t} 
    (r_1+r_2)^\frac{1-\kappa}{\kappa}
    a(r_1-r_2)\big[ (\tilde{v}^i_1 - \tilde{v}^i_2) \partial_i (r_1 + r_2) 
    \\
    & \ \ \  
+ (r_1 (\GInv_1)^{ij} + r_2 (\GInv_2)^{ij} )\partial_i ( (v_1)_j - (v_2)_j )\big]\, dx\\
&- \int_{\Omega_t} (r_1+r_2)^\frac{1-\kappa}{\kappa} 
b \GInvP^{ij}_{mid} ((v_1)_j-(v_2)_j) \partial_i (r_1 - r_2)\, dx .
    \end{aligned}
    \nonumber 
\end{equation}
This term will need further discussion.

\medskip

d)  In $I_4$ we place the contribution of the forth term on the RHS of \eqref{eq1}:
\begin{equation}
\begin{aligned}
    I_4= -\int_{\Omega_t} 
    (r_1+r_2)^\frac{1-\kappa}{\kappa}
    a(r_1-r_2)(r_1 a_{11} v_1^i + r_2 a_{12} v_2^i )\partial_i (r_1 - r_2)\, dx.
    \end{aligned}
    \nonumber
\end{equation}
This term will be discussed later.

\medskip
e) $I_5$ is given by the third and fifth terms 
on the RHS in \eqref{eq1} and the third term on the RHS from \eqref{eq2}
\begin{equation}
\begin{aligned}
 I_5 =
&
\int_{\Omega_t}
(r_1+r_2)^\frac{1-\kappa}{\kappa} a(r_1-r_2)  
(r_1 (\GInv_1)^{ij} - r_2 (\GInv_2)^{ij} )\partial_i ( (v_1)_j + (v_2)_j ) \, dx
\\
&-\int_{\Omega_t}
(r_1+r_2)^\frac{1-\kappa}{\kappa}
a(r_1-r_2)
(r_1 a_{11} v_1^i - r_2 a_{12} v_2^i )\partial_i (r_1 + r_2)  \, dx\\
   & -\int_{\Omega_t}  (r_1+r_2)^\frac{1-\kappa}{\kappa} b
   (a_{21}+a_{22})^{-1}\GInvP^{ij}_{mid} ((v_1)_j-(v_2)_j)(a_{21} - a_{22}) \partial_i (r_1 + r_2)\, dx.
    \end{aligned}
    \nonumber
\end{equation}
All of the terms in $I_5$ are straightforward  Gronwall terms. 

\medskip

f) $I_6$ contains the terms where
$\Dt$ falls on $(r_1+r_2)^\frac{1-\kappa}{\kappa}$:
\begin{equation*}
    \begin{aligned}
    I_6 = 
     \frac{1-\kappa}{\kappa} \! \int_{\Omega_t} \! 
 (r_1+r_2)^{\frac{1}{\kappa}-2}
\left( a (r_1-r_2)^2\! +\!  b(a_{21}\!+\!a_{22})^{-1} \GInvP^{ij}_{mid} (v_1-v_2)_i (v_1-v_2)_j \right) \Dt(r_1+r_2)\, dx.
    \end{aligned}
\end{equation*}
We will analyze $I_6$ later.

\bigskip

It remains to take a closer look at the integrals $I_1$, $I_3$, $I_4$, and $I_6$. We consider them
in succession.

\medskip

\emph{The bound for $I_1$.}
Here we write 
\[
2 D_t (r_1+r_2) = 2 D_t^1 r_1 + D_t^2 r_2 - (\tilde v_1 - \tilde v_2) \cdot \nabla(r_1-r_2), 
\]
and
\[
2 D_t (r_1-r_2) = 2 D_t^1 r_1 - D_t^2 r_2 - (\tilde v_1 - \tilde v_2)\cdot \nabla(r_1+ r_2) .
\]
The first two terms have size $O(B(r_1+r_2))$ so their contribution is a Gronwall term.
We are left with the contribution of the last terms, which  yields the expressions
\begin{equation}
\begin{aligned}
    I_1^a= & \int_{\Omega_t} 
    (r_1+r_2)^\frac{1-\kappa}{\kappa}
     a_\mu (r_1-r_2)^2  (\tilde v_1 - \tilde v_2)\cdot \nabla(r_1-r_2) \,  dx
    \\
    & +\!
    \int_{\Omega_t} \!
    (r_1+r_2)^\frac{1-\kappa}{\kappa}
    b_\mu (a_{21}+a_{22})^{-1} \GInvP^{ij}_{mid} 
    ((v_1)_i-(v_2)_i)((v_1)_j-(v_2)_j) 
     (\tilde v_1 - \tilde v_2)\!\cdot\! \nabla(r_1-r_2) \,  dx,
    \\
   I_1^b = &  \int_{\Omega_t} (r_1+r_2)^\frac{1-\kappa}{\kappa} 
   a_\nu (r_1-r_2)^2  (\tilde v_1 - \tilde v_2)\!\cdot\! \nabla(r_1+ r_2)\,  dx 
   \\
   & 
   + \!
   \int_{\Omega_t} \! (r_1+r_2)^\frac{1-\kappa}{\kappa}
   b_\nu (a_{21}+a_{22})^{-1} \GInvP^{ij}_{mid} 
   ((v_1)_i-(v_2)_i)((v_1)_j-(v_2)_j) 
   (\tilde v_1 - \tilde v_2)\!\cdot\! \nabla(r_1+ r_2)\,  dx .  \end{aligned}
   \nonumber
\end{equation}
For the second integral in both expressions, 
we bound $|\tilde v_1 - \tilde v_2| \lesssim |r_1-r_2|+|v_1-v_2|$
and estimate their part by 
\[
\int_{\Omega_t}
(r_1+r_2)^\frac{1-\kappa}{\kappa}
|r_1-r_2||v_1-v_2|^2 \, dx \lesssim D_\H(t)
\]
and 
\[
J_2 = \int_{\Omega_t}  (r_1+r_2)^\frac{1-\kappa}{\kappa}
|v_1-v_2|^3 \, dx,
\]
which is discussed later.

We are left with the first integrals in $I_1^a$ 
and $I_1^b$, which we record as 
\begin{align}
    J_1^a= & \int_{\Omega_t} 
    (r_1+r_2)^\frac{1-\kappa}{\kappa}
     a_\mu (r_1-r_2)^2  (\tilde v_1 - \tilde v_2)\cdot \nabla(r_1-r_2) \,  dx
     \nonumber
\end{align}
and
\begin{align}
       J_1^b = &  \int_{\Omega_t} (r_1+r_2)^\frac{1-\kappa}{\kappa} 
   a_\nu (r_1-r_2)^2  (\tilde v_1 - \tilde v_2)\cdot \nabla(r_1+ r_2)\,  dx.
   \nonumber
\end{align}
These integrals are also discussed later.

\bigskip

\emph{The bound for $I_3$.} 
For $I_3$,
we seek to capture the same cancellation that it is seen in the analysis of the linearized equation.
We look at the last term in $I_3$, use 
$b=a(r_1+r_2)$, and integrate by parts; if the derivatives falls on $\GInvP$ then this is a straightforward Gronwall term. We are left with three contributions, two of which we pair with the first two terms in $I_3$. We obtain 
\[
\begin{aligned}
 I_3=
& 
-\int_{\Omega_t} 
(r_1+r_2)^\frac{1-\kappa}{\kappa}
a(r_1-r_2)\left[ (\tilde{v}^i_1 - \tilde{v}^i_2) -\frac{1}{\kappa} \GInvP^{ij}_{mid} ((v_{1})_j-
(v_{2})_j )\right] \partial_i (r_1 + r_2) \, dx \\
& -\int_{\Omega_t} 
(r_1+r_2)^\frac{1-\kappa}{\kappa}
a(r_1-r_2) \left[(r_1 (\GInv_1)^{ij} + r_2 (\GInv_2)^{ij} ) - (r_1+r_2)\GInvP^{ij}_{mid}\right]\partial_i ( (v_1)_j - (v_2)_j )\, dx\\
&+\int_{\Omega_t}  
(r_1+r_2)^\frac{1}{\kappa}
\partial_i a \GInvP^{ij}_{mid} ((v_1)_j-(v_2)_j)  (r_1 - r_2)\, dx
+O(B_1+B_2) D_{\H}
\\
& = I_3^1+I_3^2+ J_1^c +O(B_1+B_2) D_{\H} .
\end{aligned}
\]

For the first integral, $I_3^1$ we expand the difference $\tilde{v}^i_1 - \tilde{v}^i_2$, seen as a function of $v_1$ and $v_2$, in a Taylor series around the center $(v_1+v_2)/2$. We have 
\[
\dfrac{\partial \tilde{v}_i}{\partial v_j} = \frac{1}{v^0} \left( \delta^{ij}-\frac{v_i v_j}{(v^0)^2}\right),
\]
where we recognize the matrix on the right as being the main part in $\GInv$. Thus, we can write 
\[
\left| (\tilde{v}^i_1 - \tilde{v}^i_2) -
\frac{1}{\kappa} \GInvP^{ij}_{mid} ((v_{1})_j-(v_{2})_j)
\right|\lesssim \vert r_1-r_2\vert + (r_1+r_2) \vert v_1-v_2\vert + \vert v_1-v_2\vert^3,
\]
where the quadratic $v_1-v_2$ terms cancel because we are expanding around the middle, and we used
\eqref{E:a_3_def_and_relation} to get the second term on the right.
The contributions of all of  the  terms in the last expansion are Gronwall terms.

For the second integral $I_3^2$ we have a simpler expansion 
\[
\left| (r_1 (\GInv_1)^{ij} + r_2 (\GInv_2)^{ij} ) - (r_1+r_2)\GInvP^{ij}_{mid}\right| \lesssim \vert r_1-r_2\vert + (r_1+r_2)\vert (v_{1})_j-(v_{2})_j\vert ^2,
\]
where all contributions qualify again as Gronwall terms.

Finally, the last integral, $J_1^c$, is estimated below.

\bigskip

\emph{The bound for $I_4$.} 
After an integration by parts we have
\begin{equation}
\begin{aligned}
    I_4 & = \frac{1}{2}\int_{\Omega_t} 
    (r_1+r_2)^\frac{1-\kappa}{\kappa}
    \partial_i a \, (r_1 a_{11} v_1^i + r_2 a_{12} v_2^i )(r_1 - r_2)^2\, dx
    \\
    &   + 
    \frac{1-\kappa}{2\kappa} \int_{\Omega_t} 
    (r_1+r_2)^{\frac{1}{\kappa} - 2}
    \partial_i (r_1+r_2)
    a(r_1 a_{11} v_1^i + r_2 a_{12} v_2^i )(r_1 - r_2)^2\, dx
    \\
    & + O(B_1+B_2) D_{\H}.
    \end{aligned}
    \nonumber
\end{equation}
Writing
\begin{align}
    \begin{split}
        |r_1 a_{11} v_1^i + r_2 a_{12} v_2^i| & \lesssim r_1+r_2,
    \end{split}
    \nonumber
\end{align}
both integrals are bounded by $O(B_1+B_2) D_{\H}$.

\bigskip

\emph{The bound for $I_6$.} 
We use $\Dt  (r_1+r_2) = 
D_t^1 r_1 + D_t^2 r_2 - (\tilde{v}_1 - \tilde{v}_2)
\cdot \nabla( r_1-r_2)$ where the first two terms
are bounded by $(B_1+B_2)(r_1+r_2)$ and yield Gronwall contributions.
Then we write
\begin{align}
    I_6 \lesssim J_1^d + J_2 + O(B_1+B_2) D_\H, 
    \nonumber
\end{align}
where 
\begin{equation}
    \begin{aligned}
    J_1^d &= 
    \frac{1-\kappa}{\kappa} \int_{\Omega_t} 
 (r_1+r_2)^{\frac{1}{\kappa}-2}
 a (r_1-r_2)^2 (\tilde{v}_1 - \tilde{v}_2)
\cdot \nabla( r_1-r_2)\, dx.
 \nonumber
    \end{aligned}
\end{equation}
\bigskip

To summarize the outcome of our analysis so far, we have proved that
\[
 \frac{d}{dt}\tilde{D}_{\H}(t) \leq   J_1^a + J_1^b + J_1^c + J_1^d + O(J_2) + O(B_1+B_2) D_\H.
\]

It remains to estimate $J_2$, $J_1^a$, $J_1^b$,
$J_1^c$,  and $J_1^d$, which we write here again for convenience:
\begin{equation}
    \begin{aligned}
    J_2 &= \int_{\Omega_t}  (r_1+r_2)^\frac{1-\kappa}{\kappa}
|v_1-v_2|^3 \, dx,
    \nonumber
    \\
    J_1^a &= \int_{\Omega_t} 
    (r_1+r_2)^\frac{1-\kappa}{\kappa}
     a_\mu (r_1-r_2)^2  (\tilde v_1 - \tilde v_2)\cdot \nabla(r_1-r_2) \,  dx,
    \nonumber
    \\
    J_1^b &=  \int_{\Omega_t} (r_1+r_2)^\frac{1-\kappa}{\kappa} 
   a_\nu (r_1-r_2)^2  (\tilde v_1 - \tilde v_2)\cdot \nabla(r_1+ r_2)\,  dx,
    \nonumber
    \\
    J_1^c &= \int_{\Omega_t}  
(r_1+r_2)^\frac{1}{\kappa}
\partial_i a \GInvP^{ij}_{mid} ((v_1)_j-(v_2)_j)  (r_1 - r_2)\, dx,
    \nonumber
    \\
    J_1^d &= 
    \frac{1-\kappa}{\kappa} \int_{\Omega_t} 
 (r_1+r_2)^{\frac{1}{\kappa}-2}
 a (r_1-r_2)^2 (\tilde{v}_1 - \tilde{v}_2)
\cdot \nabla( r_1-r_2)\, dx.
    \end{aligned}
\end{equation}

The integral $J_2$ is the same as in \cite{IT-norel} and can be estimated accordingly, 
using interpolation inequalities; see Lemma~4.4 in \cite{IT-norel}.

The bound for the integrals $J_1^a$, $J_1^b$, $J_1^c$ and $J_1^d$ matches 
estimates for similar integrals in \cite{IT-norel}. 
Precisely, the integrals  $J_1^a$ and $J_2^b$ are estimated as the integrals 
called $J_1^b$ and $J_1^c$ in \cite{IT-norel},
respectively, see Lemmas 4.6 to 4.13 in \cite{IT-norel}.
The integral $J_1^c$ is estimated as the integral
$J_1^d$ in \cite{IT-norel}, see Lemmas 4.6 to 4.13 in \cite{IT-norel}. The integral $J_1^d$ is estimated as the integral $J_1^a$ in \cite{IT-norel}, see Lemmas 4.6 to 4.13 in \cite{IT-norel}.

We caution the reader that the arguments in 
\cite{IT-norel} are not straightforward, and involve peeling off a carefully chosen boundary layer, with separate estimates inside the boundary layer and outside it.
The only difference in the present paper is the presence of additional weights in the 
integrals, which are smooth functions of $r_1,r_2,v_1,v_2$. For instance, the difference $\tilde v_1 - \tilde v_2$ can be expanded as
\[
\tilde v_1 - \tilde v_2 = f(r,v) (r_1-r_2) + g(r,v) (v_1-v_2),
\]
where $r,v$ stand for $r_1,r_2,v_1,v_2$ and $f,g$ are smooth. The contribution of the first term admits a straightforward Gronwall bound, and the contribution of the second term is akin to the corresponding integral in \cite{IT-norel} but with the added smooth weight. 
The point is that every time we integrate by parts and the derivative falls on the smooth weight, the corresponding contribution is a straightforward Gronwall term. Hence such smooth weights make no difference if added in the arguments in \cite{IT-norel}.
\end{proof}

\section{Energy estimates for solutions\label{S:Energy_estimates}}

Our goal in this section is to establish
uniform control over the $\mathbf{H}^{2k}$
norm of the solutions $(r,v)$ to \eqref{E:r_v_eq} with growth given
by the norms $A$ and $B$. For this, we will
use appropriate energy functionals $E^{2k}= E^{2k}(r,v)$ constructed
out of vector fields naturally associated
with the evolution. Our functionals will
be associated with the wave and transport
parts of the system, which will be considered
at matched regularity.

The vector fields we will consider are:
\begin{itemize}
    \item The material derivative $\Dt$, which has
    order $1/2$.
    \item All regular derivatives $\partial$, 
    which have order $1$.
    \item Multiplication by $r$, which has order $-1$.
\end{itemize}
The wave part of the energy will be associated mainly with $\Dt$, whereas the transport
part will be associated with all of the above
vector fields.

\subsection{Good variables and the energy functional\label{S:Good_variables_energy_functional}}
Heuristically, higher order energy functionals should be obtained by applying an appropriate 
number of vector fields to the equation, and then verifying that the output solves the linearized equation modulo perturbative terms. In the absence of the free boundary, there are two equally good
choices, (i) to spatially differentiate the equation, using the $\partial_j$ vector fields,
or (ii) to differentiate the equation in time, using the $\partial_t$ vector field.

However, in the free boundary setting, both of the above choices have issues, as neither 
$\partial_j$ nor $\partial_t$ are adapted to the boundary. For $\partial_t$ we do have a seemingly
better choice, namely to replace it by the material derivative $D_t$. However, this 
has the downside that it does not arise from a symmetry of the equations, and consequently 
the expressions $(D_t^{2k} r, D_t^{2k} v)$ are not good approximate solutions to the linearized equations. To address this matter, we add suitable corrections to  these expressions, obtaining  
what we call the \emph{good variables} $(s_{2k},w_{2k})$.
Precisely, motivated by the linearized equations \eqref{E:Equations_linearized}, we introduce
\begin{align}
\begin{split}
s_0 &:= r,
\\
w_0 & := v,
\\
s_1 & := \partial_t r,
\\
w_1 & := \partial_t v,
\\ 
s_2 & :=\Dt^2r + \frac{1}{2}
\frac{a_0 a_2}{\kappa \br} \GInvP^{ij} \partial_i r 
\partial_j r,
\\
(w_k)_i &:= \Dt^k v_i, \, \qquad  k \geq 2,
\\
s_k & := \Dt^k r -  \frac{a_0}{\kappa \br} \GInvP^{ij} \partial_i r \Dt^{k-1} v_j, \qquad \, k\geq 3.
\end{split}
\label{E:Good_variables_def}
\end{align}
Here, we use the full equations \eqref{E:r_v_eq} to 
interpret $(s_j,w_j)$ as multilinear expressions in $(r,v)$, with coefficients which are functions of undifferentiated $(r,v)$. Observe that it would be compatible
with the linearized equations to define $s_k$ with 
$\dfrac{1}{\kappa} \GInvP^{ij} \partial_i r \Dt^{k-1} v_j$ 
instead of 
$\dfrac{a_0}{\kappa \br} \GInvP^{ij} \partial_i r \Dt^{k-1} v_j$.
The difference between the two cases, however, is a perturbative
term due to \eqref{E:a_3_def_and_relation}, and the definition
we adopt here is more convenient because $\dfrac{a_0}{\kappa \br}$
is what appears in the commutator $[\Dt,\partial]$.

Using equations \eqref{E:r_v_eq}, we find that for $k \geq 1$, our good variables
$(s_{2k}, w_{2k})$ can be seen as 
approximate solutions to the linearized equation \eqref{E:Equations_linearized}. Precisely, they satisfy the following equations in $\MovingDom$ 
(compare with \eqref{E:Equations_linearized}):
\begin{subequations}{\label{E:Good_variables_eq}}
\begin{align}[left = \empheqlbrace\,]
\label{E:s_eq} 
& \Dt s_{2k}
+ \frac{1}{\kappa}\GInvP^{ij} \partial_i r (w_{2k})_i
+ r   \GInvP^{ij}  \partial_i (w_{2k})_j 
+ra_1   v^i \partial_i s_{2k} 
 = f_{2k}
\\
& \Dt (w_{2k})_i + a_2 \partial_i s_{2k}   = (g_{2k})_i, 
\label{E:w_eq}
\end{align}
\end{subequations}
with source terms $(f_{2k},g_{2k})$ which will be shown to be perturbative, see Lemma~\ref{l:fg-2k}.
For later use we compute the expressions for the source terms $(f_{2k},g_{2k})$, which are given by
\begin{subequations}
\begin{align}
\label{E:Source_s_eq}
f_{2k} &= [ r \GInvP^{ij} \partial_i, \Dt^{2k} ] v_j
+ 
[r a_1 v^i \partial_i, \Dt^{2k} ] r 
- 
r  \frac{a_0 a_1}{\kappa \br} \GInvP^{pq} \partial_q r v^i \partial_i \Dt^{2k-1} v_p
\\
& \ \ \ 
- \Dt \left(\frac{a_0}{\kappa \br} \GInvP^{ij} \partial_i r \right) \Dt^{2k-1} v_j 
- r a_ 1  v^i \partial_i \left(\frac{a_0}{\kappa
\br} \GInvP^{pq} \partial_p r \right) \Dt^{2k-1} v_q
\nonumber
\\
& \ \ \ 
-
r a_3 \GInvP^{ij} \partial_i r (w_{2k})_j ,
\nonumber
\\
\label{E:Source_w_eq}
(g_{2k})_i & = \Dt^{2k-1} [ a_2 \partial_i, \Dt] r + 
[a_2 \partial_i, \Dt^{2k-1} ] \Dt r
- 
\frac{a_0 a_2}{\kappa \br}\GInvP^{jl} \partial_j r 
\partial_i \Dt^{2k-1} v_l
\\
& \ \ \
-a_2 \partial_i\left( \frac{a_0}{\kappa
\br }
\GInvP^{ml} \partial_m  r \right) \Dt^{2k-1} v_l,
\nonumber
\end{align}
\end{subequations}
where we used that
\begin{align}
[A,BC] = [A,B]C + B[A,C].
\nonumber
\end{align}
to write 
\begin{align}
[a_2 \partial_i, \Dt^{2k} ] 
= [a_2 \partial_i, \Dt^{2k-1}] \Dt + \Dt^{2k-1} [ a_2 \partial_i, \Dt ].
\nonumber
\end{align}

We also define
\begin{align}
\Vort_{2k} = r^a \partial^b \Vort, \, 
|b| \leq 2k-1, \, 
|b|-a = k-1
\label{E:Vorticity_2k_def},
\end{align}
which we think of as the vorticity counterpart to 
$(s_{2k},w_{2k})$. These we will think of as solving approximate transport equations;
using \eqref{E:Transport_vorticity}
we find
\begin{align}
\begin{split}
\Dt (\Vort_{2k})_{ij} 
+ \frac{1}{v^0} (\partial_i v^l (\Vort_{2k})_{l j} 
+  \partial_j v^l (\Vort_{2k})_{i l } )
- \frac{v^l}{(v^0)^2}  (\partial_iv^0v (\Vort_{2k})_{lj} - \partial_j v^0
(\Vort_{2k})_{li})
&= (h_{2k})_{ij},
\end{split}
\label{E:Transport_vorticity_2k}
\end{align}
where $h_{2k}$ 
is given by
\begin{align}
\begin{split}
    (h_{2k})_{ij} &= [\Dt, r^a \partial^b] \Vort_{ij} 
    + [\frac{1}{v^0} \partial_i v^l ,
    r^a \partial^b] \Vort_{l j}
    + 
    [\frac{1}{v^0}  \partial_j v^l,r^a\partial^b] 
    \Vort_{i l } 
    - [\frac{1}{(v^0)^2}  \partial_iv^0v^l,r^a \partial^b] \Vort_{lj}
    \\
    &
    \quad \ + [\frac{1}{(v^0)^2}  \partial_j v^0 v^l,r^a\partial^b]
    \Vort_{li}.
\end{split}
    \label{E:Source_vort_2k_eq}
\end{align}
We introduce the wave energy
\begin{align}
E^{2k}_{\text{wave}}(r,v) := \sum_{j=0}^{k} 
\norm{ (s_{2j}, w_{2j}) }^2_{\mathcal{H}},
\nonumber
\end{align}
the transport energy 
\begin{align}
E^{2k}_{\text{transport}}(r,v) := \norm{\Vort}^2_{H^{2k-1,k+
\frac{1}{2\kappa}}},
\nonumber
\end{align}
and the total energy
\begin{align}
E^{2k}(r,v) := E^{2k}_{\text{wave}}(r,v) + E^{2k}_{\text{transport}}(r,v) .
\label{E:Total_energy_def}
\end{align}

\subsection{Energy coercivity\label{S:Energy_coercivity}}
Our goal in this section is to show that 
the energy \eqref{E:Total_energy_def}
measures the $\mathbf{H}^{2k}$ size of 
$(r,v)$. To do so, we would like
to consider the energy as a functional
of $(r,v)$ defined at a fixed time.
This can be done by using 
equations \eqref{E:r_v_eq}
to algebraically solve for spatial derivatives
of $(r,v)$.

\begin{theorem}
\label{T:Energy_norm_equivalence}
Let $(r,v)$ be smooth functions
in $\overline{\MovingDom}$. Assume that $r$ is positive in $\MovingDom$ and uniformly non-degenerate
on the $\Gamma$. Then
\begin{align}
E^{2k} \approx_A \norm{ (r,v) }^2_{\mathcal{H}^{2k}}.
\nonumber
\end{align}
\end{theorem}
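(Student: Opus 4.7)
I would prove the equivalence by induction on $k$, with the base case $k=0$ being immediate from the definition of $\mathcal{H}$ (no vorticity component appears, and $(s_0,w_0)=(r,v)$). For the inductive step, I would establish the two inequalities separately: the bound $E^{2k} \lesssim_A \|(r,v)\|^2_{\mathcal{H}^{2k}}$ is a direct consequence of expanding $s_{2j}, w_{2j}, \varpi_{2k}$ as multilinear expressions in $(r,v)$; the reverse bound $\|(r,v)\|^2_{\mathcal{H}^{2k}} \lesssim_A E^{2k}$ is the heart of the theorem and requires coercive estimates for the transition operators $L_1$ and $L_2+L_3$ together with a careful induction.

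\textbf{The upper bound.} Using the equations \eqref{E:r_v_eq} to algebraically trade $\partial_t$ for $(v^i/v^0)\partial_i$ plus lower-order corrections, each application of $\Dt$ on $(r,v)$ yields a multilinear expression with a combined order of $1/2$ in the bookkeeping scheme of Section~\ref{S:Bookkeeping}. A straightforward induction on $j$ shows that $s_{2j}$ is a sum of terms $r^{a}\partial^{\alpha}(r,v)$ with $|\alpha|-a\leq j$ and $|\alpha|\leq 2j$, with coefficients depending smoothly on $(r,v)$, and similarly for $w_{2j}$; the defining corrections in \eqref{E:Good_variables_def} involve $\partial r$ and undifferentiated $(r,v)$, so they preserve the weight counting. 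Therefore $\|(s_{2j},w_{2j})\|_\mathcal{H}^2$ is bounded by the $\mathcal{H}^{2j}$ norm of $(r,v)$, with an $A$-dependent constant from the $L^\infty$ control of the smooth coefficients. The transport piece $\|\varpi\|^2_{H^{2k-1,k+1/\kappa}}$ is handled similarly, since $\varpi = d_{st} v$ and the extra $r$-weights in its definition \eqref{E:Vorticity_2k_def} match the $\mathcal{H}^{2k}$ weight scheme on derivatives of $v$.

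\textbf{The lower bound.} Here I use a descending induction on the number of $\Dt$'s. From $(s_{2k},w_{2k}) \in \mathcal{H}$ and the definitions \eqref{E:Good_variables_def}, I first extract weighted $L^2$ control on $\Dt^{2k} r$ and $\Dt^{2k} v_i$ after absorbing the corrections (which, at a fixed level, can be bounded inductively by lower-$j$ good variables or by lower-order terms when $A$ is bounded). Then, for each $j$, I need to pass from control of $(s_{2j}, w_{2j})$ in $\mathcal{H}$ to control of $2j$ spatial derivatives of $(r,v)$ in the weighted $L^2$ norms defining $\mathcal{H}^{2j}$. The mechanism is that $\Dt^2$ acting on $r$ is, modulo perturbative terms, the transition operator $L_1$ applied to $r$, while $\Dt^2$ acting on $v$ splits along divergence and curl: the divergence part is controlled by $L_2$ applied to $v$, and the curl part is recovered from the transport energy on $\varpi$. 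Coercivity of $\tilde L_1$ and $\tilde L_2 + \tilde L_3$ (the versions of the transition operators in case I of Section~\ref{S:Transition_operators}, whose coefficients depend only on $r,\nabla r$ and undifferentiated $v$) then upgrades these to two additional spatial derivatives with the correct $r$-weights. Iterating in $j$ from $j=k$ downward, and at each stage filling in mixed space-time derivatives using the equations, reconstructs every term appearing in the $\mathcal{H}^{2k}$ norm.

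\textbf{Main obstacle.} The key technical point is establishing the coercive elliptic estimates for $\tilde L_1$ and $\tilde L_2 + \tilde L_3$ in the weighted spaces adapted to the physical vacuum boundary, namely bounds of the form
\begin{equation}
\| f \|^2_{H^{2,\sigma+1}} \lesssim_A \|\tilde L_1 f\|^2_{L^2(r^{2\sigma})} + \text{lower order},
\nonumber
\end{equation}
(with the analogous inequality for $\tilde L_2+\tilde L_3$ acting on one-forms) where the weights are exactly those that appear in the $\mathcal{H}^{2k}$ norm, and where lower-order terms can be absorbed via the inductive hypothesis and the interpolation inequalities of Section~\ref{S:Embedding_and_interpolation}. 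These coercivity bounds rest on integration by parts using the divergence structure of $\tilde L_1$ and the combined divergence-curl structure of $\tilde L_2+\tilde L_3$; the boundary terms vanish because of the $r$-weights, so one obtains a clean energy identity that, combined with Hardy-type estimates, recovers the full gradient. Once these elliptic bounds are in place, the rest of the argument is bookkeeping, with every commutator $[\Dt,\partial]$ or $[L_i,\partial]$ producing terms whose order counted by the scheme of Section~\ref{S:Bookkeeping} is strictly lower and hence controllable by the induction hypothesis with $A$-dependent constants.
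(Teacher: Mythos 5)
Your overall approach matches the paper's: the $\lesssim$ direction via expanding the good variables as multilinear expressions and applying the interpolation inequalities, and the $\gtrsim$ direction via the recurrence relating $s_{2j}$ to $\tilde L_1 s_{2j-2}$ plus the coercive elliptic estimates for $\tilde L_1$ and $\tilde L_2 + \tilde L_3$, with the curl part recovered from the vorticity transport energy. The framing as ``induction on $k$'' is cosmetic; the real induction in both cases runs over $j$, as you describe.

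There is, however, one genuine gap in your last paragraph. You assert that every commutator $[\Dt,\partial]$ or $[L_i,\partial]$ produces terms whose bookkeeping order is \emph{strictly lower}, hence perturbative. For the commutator $[\Dt,\partial]$ and for $[r^a\partial^b,\tilde L_1]$ with $a>0$ this is indeed the case (with the $a>0$ case also requiring an inner induction on $a$ using the non-endpoint structure). But for the pure derivative case $a=0$, the leading term of $[\partial_n^b,\tilde L_1]$ is \emph{not} lower order: using $\partial_n r \approx 1$ near the boundary it contributes $b\,a_2 G^{ij}\partial_i r\,\partial_j \partial_n^b s$, a term of exactly the same weight and order as the first-order part $\tfrac{1}{\kappa} a_2 G^{ij}\partial_i r\,\partial_j$ of $\tilde L_1$ itself. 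This cannot be absorbed by the induction hypothesis with an $A$-dependent constant. The paper's resolution is to combine it back with $\tilde L_1$, i.e.\ to observe that $\partial_n^b\,\tilde L_1 \approx \tilde L_1^b\,\partial_n^b$ for a modified operator
\begin{align}
\tilde L_1^b s = r\,a_2 G^{ij}\partial_i\partial_j s + a_2\Big(\tfrac{1}{\kappa}+b\Big)G^{ij}\partial_i r\,\partial_j s,
\nonumber
\end{align}
and then to check that the coercivity estimate of Lemma~\ref{L:Elliptic_estimates} goes through unchanged for $\tilde L_1^b$ (it does, because the only role of the zeroth-order coefficient $\tfrac{1}{\kappa}$ in the integration-by-parts argument is its positivity). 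Without this step, the concatenation bound $\norm{(s_{2j-2},w_{2j-2})}_{\mathcal{H}^{2k-2j+2}} \lesssim \norm{(s_{2j},w_{2j})}_{\mathcal{H}^{2k-2j}} + \varepsilon\norm{(r,v)}_{\mathcal{H}^{2k}}$ does not close for the non-endpoint weighted norms, and the descending induction you outline stalls after the first application of the elliptic estimate.
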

\begin{proof}
We begin with the $\lesssim$ part. We consider the wave part of the energy
and the corresponding expressions for $(s_{2k},w_{2k})$. Using use equations
\eqref{E:r_eq} and \eqref{E:v_eq} to successively solve for $\Dt(r,v)$, we obtain that each 
$(s_{2k},w_{2k})$ is a linear combination of multilinear expressions in $r$ and 
$\partial v$ (with order zero coefficients).

We will use our bookkeeping scheme of Section \ref{S:Bookkeeping}
to understand the expressions for $(s_{2k},w_{2k})$. It is useful to record
here  the order and structure of the linear-in-derivatives top order
terms obtained by using the equations to inductively compute 
 $\Dt^{2k} (r,v)$ and  $\Dt^{2k-1}(r,v)$,
which involve $2k$ and $2k-1$ derivatives, respectively:
\begin{subequations}{\label{E:Dt_2k_2k+1_leading_order}}
\begin{align}
\Dt^{2k} r &\approx r^k \partial^{2k}r  + r^{k+1} \partial^{2k} v \approx  
r^k \partial^{2k} r,
\label{E:Dt_2k_r_leading_order}
\\
(k-1) & \approx (k-1) + (k-\frac{3}{2}) \approx (k-1),
\nonumber
\\
\Dt^{2k} v & \approx r^k \partial^{2k} v + r^{k} \partial^{2k} r \approx  r^k \partial^{2k} v ,
\label{E:Dt_2k_v_leading_order}
\\
(k-\frac{1}{2}) & \approx (k-\frac{1}{2}) + (k-1) \approx (k-\frac{1}{2}) ,
\nonumber
\\
\Dt^{2k-1} r &\approx r^{k} \partial^{2k-1}r  + r^{k} \partial^{2k-1} v \approx 
r^{k} \partial^{2k-1} v,
\label{E:Dt_2k-1_r_leading_order}
\\
(k-\frac{3}{2}) & \approx  (k-2) + (k-\frac{3}{2}) \approx  (k-\frac{3}{2}),
\nonumber
\\
\Dt^{2k-1} v & \approx r^{k} \partial^{2k-1} v + r^{k-1} \partial^{2k-1} r \approx 
 r^{k-1} \partial^{2k-1} r,
\label{E:Dt_2k-1_v_leading_order}
\\
(k -1 )& \approx (k-\frac{3}{2}) + (k-1) \approx (k-1).
\nonumber
\end{align}
\end{subequations}
Expressions \eqref{E:Dt_2k_2k+1_leading_order} are obtained by successively solving for 
$\Dt(r,v)$ in \eqref{E:r_eq}-\eqref{E:v_eq}. Below each expression in 
\eqref{E:Dt_2k_r_leading_order}-\eqref{E:Dt_2k-1_v_leading_order} we have written the orders
of the corresponding terms. The terms of order
$k-3/2$, $k-1$, $k-2$, and $k-3/2$ in \eqref{E:Dt_2k_r_leading_order},
\eqref{E:Dt_2k_v_leading_order}, \eqref{E:Dt_2k-1_r_leading_order}, and \eqref{E:Dt_2k-1_v_leading_order},
respectively have orders less than the other terms in the same expressions, despite
having the same number of derivatives, and hence are dropped in the second $\approx$ on each line.
Such terms have smaller order, even though they have the same number of derivatives, 
because of extra powers of $r$, and come from the term
$r a_1 v^i \partial_i r$ in \eqref{E:r_eq}.

We begin with the expressions of highest order (see Section \ref{S:Bookkeeping}), thus we first 
focus on the multilinear expressions that come from ignoring
the last term on LHS \eqref{E:r_eq} and also where no derivative lands on $\GInv$, $a_1$, and $a_2$. 
We also consider first the case when every time we commute $\Dt$ with $\partial$, the derivative
lands on $v^i$ and not on $r$ via $v^0$. 

In this case,
the corresponding multilinear expressions for $(s_{2k},w_{2k})$ have the following properties:
\begin{itemize}
\item They have order $k-1$ and $k-\frac{1}{2}$, respectively.
\item They have exactly $2k$ derivatives.
\item They contain at most $k+1$ and $k$ factors of $r$, respectively.
\end{itemize}
Thus, a multilinear expression for $s_{2k}$ in this case has the form
\begin{align}
M = r^a \prod_{j=1}^J \partial^{n_j} r \prod_{l=1}^L \partial^{m_l} v,
\label{E:General_multilinear_expression_good_s}
\end{align}
where $n_j,m_l \geq 1$, and subject to
\begin{equation}\label{count-at-scale}
\begin{aligned}
&\sum n_j + \sum m_l = 2k,
\\
&a + J + L/2 = k+1,
\end{aligned}
\end{equation}
and when $J=0$ or $L=0$ the corresponding product is omitted. We claim that it is possible to 
choose $b_j$ and $c_l$ such that 
\begin{align}
0 \leq b_j \leq (n_j - 1 ) \frac{k}{2k-1}, \, 0 \leq c_l \leq (m_l - 1) \frac{k+1/2}{k - 1/2}, \,
a = \sum b_j + \sum c_l.
\nonumber
\end{align}
This follows from observing that
\begin{align}
& \sum (n_j - 1 ) \frac{k}{2k-1} + \sum  (m_l - 1) \frac{k+1/2}{k - 1/2}
\leq 
\left( \sum n_j + \sum m_l - J - L \right) \frac{k}{2k-1} 
\nonumber
\\
&= (2k-J-L) \frac{k}{2k-1} \leq (a+k-1) \frac{k}{2k-1} \leq a,
\nonumber
\end{align}
since $a\leq k$. Equality holds only if $a=k$, $J=1$ and $L=0$ (i.e., for the leading linear case).
This shows that it is possible to make such a choice of $b_j$ and $c_l$, which allows us to 
use our interpolation theorems
\begin{align}
\norm{r^{b_j} \partial^{n_j} r}_{L^{p_j}(r^{\frac{1-\kappa}{\kappa}})} & \lesssim (1+A)^{1-\frac{2}{p_j}} 
\norm{(r,v)}_{\mathcal{H}^{2k}}^{\frac{2}{p_j}},
\nonumber
\\
\norm{r^{c_l} \partial^{m_l} v }_{L^{q_l}(r^{\frac{1-\kappa}{\kappa}})} & \lesssim A^{1-\frac{2}{q_l}} 
\norm{(r,v)}_{\mathcal{H}^{2k}}^{\frac{2}{q_l}},
\nonumber
\end{align}
where
\begin{align}
\frac{1}{p_j} = \frac{n_j -1 -b_j}{2(k-1)}, \, 
\frac{1}{q_l} = \frac{m_l -1/2 -c_l}{2(k-1)}.
\nonumber
\end{align}
Observe that the numerators in $1/p_j$ and $1/q_l$ correspond to the orders
of the expressions being estimated and they add up to $k-1$ (as needed). 

In addition to the principal part discussed above, we also obtain lower order terms
in our expression for $s_{2k}$. There are three sources of such terms:

\begin{enumerate}[label=\roman*)]
    \item  The terms from the commutator $[\Dt,\partial]$ where derivatives apply to $r$  via $v^0$. This corresponds to the second term in the formal expansion
\begin{align}
[\Dt, \partial] \approx (\partial v )\partial + (\partial r) \partial,
\nonumber
\end{align}
whose order is easily seen to be $1/2$ lower. 

\item If any derivatives are applied to either $r$ or $v$ via $a_0$, $a_1$, $a_2$ or $G$,
this increases the order of the resulting expression by $0$, respectively $1/2$, compared 
to the full order of the derivative which is $1$.

\item Contributions arising from the last term in \eqref{E:r_def},
whose order is, to start with, $1/2$ lower than the rest of the terms in the \eqref{E:r_def}.
\end{enumerate}

The contributions of all such terms to $s_{2k}$ have lower order. More 
precisely, they contain expressions of the form \eqref{E:General_multilinear_expression_good_s}
but with \eqref{count-at-scale} replaced by
\begin{equation}\label{count-below-scale}
\begin{aligned}
\sum n_j + \sum m_l &= 2k,
\\
a + J + L/2 &= k+1+\frac{j}{2},
\end{aligned}
\end{equation}
where $j > 0$, and which have lower order $k-1-\frac{j}{2}$.
All such lower order terms can be estimated in a similar fashion, but using lower Sobolev norms for $(r,v)$.

\bigskip
We continue with the $\gtrsim$ part. 
Applying $\Dt$ to \eqref{E:s_eq} and \eqref{E:w_eq} and using definitions
\eqref{E:Good_variables_def}
we find the following recurrence relations
\begin{subequations}{\label{E:Recurrence_equations}}
\noeqref{E:Recurrence_s,E:Recurrence_w}
\begin{align}
 s_{2j} & = \tilde{L}_1 s_{2j-2}
  + F_{2j},
\label{E:Recurrence_s}
\\
(w_{2j})_i & = (\tilde{L}_2 w_{2j-2} )_i + G_{2j},
\label{E:Recurrence_w}
\end{align}
\end{subequations}
where $\tilde{L}_1$ and $\tilde{L}_2$ have been defined in
Section \ref{S:Linearized}.
The next Lemma characterizes
the error terms on the RHS of \eqref{E:Recurrence_equations},
and the lemma that follows gives a quantitative 
relation between the $2j$ and $2j-2$ quantities.

\begin{lemma}
\label{L:Recurrence_relations}
For $j\geq 2$, the terms $F_{2j}$ and $G_{2j}$
in \eqref{E:Recurrence_equations}
are linear combinations of multilinear expressions
in $r$ and $\partial v$ with
$2j$ derivatives and of order at most $j-1$ and $j-\frac{1}{2}$, respectively.
Moreover, they are either
\begin{enumerate} [label= \roman*)]
    \item non-endpoint, by which we mean 
multilinear expressions 
of order $j-1$ and $j-\frac{1}{2}$, respectively,
containing at most $j+1$ and $j$ factors of $r$, respectively,
and whose products contain
at least two factors of $\partial^{\geq 2} r$
or $\partial^{\geq 1} v$, or 
\item they have order
strictly less than 
$j-1$ and $j-\frac{1}{2}$, respectively, and contain at most
$j+2$ and $j+1$ factors of $r$, respectively.
\end{enumerate}
\end{lemma}
\begin{proof}
We begin with $j\geq 3$. We will analyze 
\begin{align}
    s_{2j} &= \Dt^{2j} r - \frac{a_0}{\kappa
    \br}
    \GInvP^{lm}\partial_l r \Dt^{2j-1} v_m.
    \label{E:s_2j_explicit}
\end{align}
In order to keep track of terms according to the statement of the Lemma,
we observe that $s_{2j}$ has order $j-1$. We will make successive
use of the commutator
\begin{align}
    \begin{split}
        [\Dt,\partial_l]
        = -\frac{a_0}{\kappa
        \br}
        \GInvP^{pq} \partial_l v_q \partial_p 
        +
        \frac{\br^{1+\frac{2}{\kappa}}}{(v^0)^3}
        v^p \partial_l r \partial_p.
    \end{split}
    \nonumber
\end{align}

We begin with the first term on RHS \eqref{E:s_2j_explicit}.
From \eqref{E:r_v_eq}, we compute.
\begin{align}
    \begin{split}
    \Dt^2 r &= r\GInvP^{ml} \partial_l (a_2 \partial_m r) 
    -[\Dt, r\GInvP^{ml} \partial_l  ] v_m 
    - [\Dt, r a_1 v^l \partial_l] r
    + r a_1 v^i \partial_i \left( r\GInvP^{ml} \right)\partial_l v_m
    \\
    & \ \ \
    + r a_1 v^i \partial_i (r a_1 v^l \partial_l )r
    + r^2 a_1^2 v^l v^m \partial_l \partial_m r
    + r^2 a_1 \GInvP^{ml} v^i \partial_i \partial_l v_m.
    \end{split} 
    \nonumber 
\end{align}
Then,
\begin{align}
    \begin{split}
        \Dt^{2j} r & = \Dt^{2j-2} \Dt^2 r
        \\
        &= 
        \Dt^{2j-2} \Big( 
        r\GInvP^{ml} \partial_l (a_2 \partial_m r) 
    -[\Dt, r\GInvP^{ml} \partial_l  ] v_m 
    - [\Dt, r a_1 v^l \partial_l] r
    \\
    & \ \ \
    + r a_1 v^i \partial_i \left( r\GInvP^{ml} \right)\partial_l v_m
    + r a_1 v^i \partial_i (r a_1 v^l \partial_l )r
    \\
    & \ \ \ 
    + r^2 a_1^2 v^l v^m \partial_l \partial_m r
    + r^2 a_1 \GInvP^{ml} v^i \partial_i \partial_l v_m,
    \Big).
    \end{split}
    \label{E:s_2j_first_term_expanded}
\end{align}
and we will consider each term on RHS \eqref{E:s_2j_first_term_expanded}
separately. 

The terms $ \partial_i \left( r\GInvP^{ml} \right)$
and $\partial_i (r a_1 v^l \partial_l )r$ have order at most zero, thus
\begin{align}
    \Dt^{2j-2} \left( r a_1 v^i \partial_i \left( r\GInvP^{ml} \right)\partial_l v_m
    + r a_1 v^i \partial_i (r a_1 v^l \partial_l )r
    \right)
    \nonumber
\end{align}
has order at most $j-3/2$ and belongs to $F_{2j}$.
Next,
\begin{align}
     \begin{split}
         [\Dt,r\GInvP^{ml} \partial_l] v_m 
         = 
         [\Dt,r\GInvP^{ml} ] \partial_l v_m 
         +
         r\GInvP^{ml} [\Dt,\partial_l]   v_m .
     \end{split}
     \nonumber
\end{align}
For the first term on the RHS, we have
\begin{align}
    \begin{split}
        [\Dt,r\GInvP^{ml}]  \partial_lv_m  & =
        \Dt r \GInvP^{lm} \partial_l v_m
        + r \frac{\partial \GInvP^{lm}}{\partial r} \Dt r \partial_l v_m
        + r \frac{\partial \GInvP^{lm}}{\partial v_i} \Dt v_i
        \partial_l v_m.
    \end{split}
    \nonumber
\end{align}
The second and third terms have order $\leq -1$ and $-1/2$, respectively,
thus they belong to $F_{2j}$ after differentiation
by $\Dt^{2j-2}$. For the first term, we have
\begin{align}
    \Dt r \GInvP^{lm} \partial_l v_m
    & = 
    - r \GInvP^{ij} \partial_i v_j \GInvP^{lm} \partial_l v_m
    - r a_1 v^i \partial_i r \GInvP^{lm} \partial_l v_m.
    \nonumber
\end{align}
The first term satisfies the non-endpoint property while the 
second has order $-1/2$, thus both terms belong to $F_{2j}$ after
differentiation by $\Dt^{2j-2}$. Next,
\begin{align}
    \begin{split}
        r \GInvP^{ml} [\Dt, \partial_l] v_m &
        = -\frac{ r a_0}{\kappa( 1 + \br}
        \GInvP^{pq} \partial_l v_q  \GInvP^{ml} \partial_p v_m
        +
        \frac{r \br^{1+\frac{2}{\kappa}}}{(v^0)^3}\GInvP^{ml}
        v^p \partial_l r \partial_p v_m.
    \end{split}
    \nonumber
\end{align}
The second term has order $-1/2$ so it belongs to $F_{2j}$ upon
differentiation by $\Dt^{2j-2}$. The first term has order zero, 
thus producing a top order (i.e., $j-1$) term when differentiated
by $\Dt^{2j-2}$. Nevertheless, it has two $\partial^{\geq 1}v$ terms
so it satisfies the non-endpoint property and hence it also belongs
to $F_{2j}$.

We now turn to the other commutator in \eqref{E:s_2j_first_term_expanded}:
\begin{align}
    \begin{split}
        [\Dt, r a_1 v^l \partial_l] r & = 
        [\Dt, r a_1 v^l] \partial_l 
        r
        + r a_1 v^l [\Dt,\partial_l] r
        \\
        & = 
        \Dt r a_1 v^l \partial_l r
        + r \frac{\partial a_1}{\partial r} \Dt r 
        v^l
        \partial_l r
        + r \frac{\partial a_1}{\partial v_i} \Dt v_i v^l  \partial_l r
        + r a_1 \Dt v^l \partial_l r
        \\
        & \ \ \ 
         -\frac{r a_0 a_1 }{\kappa \br}
        \GInvP^{pq} v^l \partial_l v_q \partial_p r
        +
        \frac{r a_1 \br^{1+\frac{2}{\kappa}}}{(v^0)^3}
        v^p v^l \partial_l r \partial_p r.
    \end{split}
    \nonumber 
\end{align}
The terms on the RHS have orders $\leq -1/2,-3/2,-1,-1,-1/2,-1$, respectively, so they all belong to $F_{2j}$ upon differentiation
by $\Dt^{2j-2}$.

For the last two terms on RHS \eqref{E:s_2j_first_term_expanded},
\begin{align}
     r^2 a_1^2 v^l v^m \partial_l \partial_m r
    + r^2 a_1 \GInvP^{ml} v^i \partial_i \partial_l v_m,
    \nonumber
\end{align}
we see that they have orders $-1$ and $-1/2$, thus also belong to 
$F_{2j}$ after differentiation
by $\Dt^{2j-2}$.

Therefore, writing $\approx$ for equality modulo terms that belong to
$F_{2j}$, \eqref{E:s_2j_first_term_expanded} becomes
\begin{equation}
\begin{aligned}
        \Dt^{2j} r & = \Dt^{2j-2} \Dt^2 r
        \\
        &\approx 
        \Dt^{2j-2} \Big( 
        r\GInvP^{ml} \partial_l (a_2 \partial_m r) 
        \Big)
        \\
        &
        = \sum_{\ell=0}^{2j-2} \binom{2j-2}{\ell}
        \Dt^{2j-2-\ell} r \Dt^\ell 
        \Big( 
        \GInvP^{ml} \partial_l (a_2 \partial_m r) 
        \Big)
        \\
        &= 
        r 
        \Dt^{2j-2}
        \Big( 
        \GInvP^{ml} \partial_l (a_2 \partial_m r) 
        \Big)
        + 
        \sum_{\ell=0}^{2j-2-1} \binom{2j-2}{\ell}
        \Dt^{2j-2-\ell} r \Dt^\ell 
        \Big( 
        \GInvP^{ml} \partial_l (a_2 \partial_m r)
        \Big).
    \nonumber
\end{aligned}
\end{equation}
In the second sum, we can further write
\begin{align}
    \begin{split}
        \Dt^{2j-2-\ell} r \Dt^\ell 
        \Big( \GInvP^{ml} \partial_l (a_2 \partial_m r) \Big)
        & =
            \Dt^{2j-2-\ell} r \Dt^\ell 
        \Big( \GInvP^{ml} a_2  \partial_l  \partial_m r
        + 
        \GInvP^{ml} \partial_l a_2 \partial_m r
        \Big)
    \end{split}
    \nonumber
\end{align}
The term $\Dt^{2j-2-\ell} r \Dt^\ell  
(
\GInvP^{ml} \partial_l a_2 \partial_m r
)$ has order at most $j-3/2$ and can be absorbed
into $F_{2j}$. For the first term, if $\Dt^\ell$ hits 
$\GInvP^{ml} a_2$ we again obtain a term of order strictly 
less than $j-1$ that is part of $F_{2j}$. Finally, for the term
\begin{align}
    \Dt^{2j-2-\ell} r 
    \GInvP^{ml} a_2  \Dt^\ell  \partial_l  \partial_m r,
    \nonumber
\end{align}
we use that $\ell \leq 2j-2-1$ and \eqref{E:r_eq}
to write
\begin{align}
    \begin{split}
    \Dt^{2j-2-\ell} r 
    \GInvP^{ml} a_2  \Dt^\ell  \partial_l  \partial_m r
    & =
    -\GInvP^{ml} a_2  \Dt^\ell  \partial_l  \partial_m r \Dt^{2j-3-\ell} \left( r\GInvP^{pq} \partial_p v_q \right)
    \\
    & \ \ \ 
    -\GInvP^{ml} a_2  \Dt^\ell  \partial_l  \partial_m r
    \Dt^{2j-3-\ell} \left( r a_1 v^p \partial_p r \right).
    \end{split}
    \nonumber
\end{align}
The first term contains a $\partial^{\geq 1} v$ and 
a $\partial^{\geq 2}r $ so it belongs to $F_{2j}$, whereas
the second term has order at most $j-3/2$ so it belongs to 
$F_{2j}$ as well. Hence, we have that
\begin{align}
    \begin{split}
            \Dt^{2j} r & \approx
             r 
        \Dt^{2j-2}
        \Big( 
        \GInvP^{ml} \partial_l (a_2 \partial_m r) 
        \Big) 
        \\
        & = r\Dt^{2j-3} \Big (
        \GInvP^{ml} \partial_l (a_2 \partial_m \Dt r ) \Big )
        + r\Dt^{2j-3} \Big ( [\Dt, \GInvP^{lm} \partial_l (a_2 \partial_m \cdot ) ] r \Big ).
    \end{split}
    \nonumber
\end{align}
We now compute the commutator on the second term on the RHS:
\begin{align}
    \begin{split}
        [\Dt, \GInvP^{lm} \partial_l (a_2 \partial_m \cdot ) ] r &=
        \Dt \left( \GInvP^{lm} \partial_l (a_2 \partial_m r) \right)
        - \GInvP^{lm} \partial_l \left( a_2 \partial_m \Dt r \right)
        \\
        &=
        \frac{\partial \GInvP^{lm}}{\partial r}\Dt r \partial_l(a_2\partial_m r) 
        +
         \frac{\partial \GInvP^{lm}}{\partial v_i}\Dt v_i \partial_l(a_2\partial_m r)    
        \\
        & \ \ \ 
        + \GInvP^{lm} \Dt \partial_l (a_2 \partial_m r) 
        - \GInvP^{lm} \partial_l(a_2 \partial_m \Dt r).
    \end{split}
    \nonumber
\end{align}
The first and second terms on the RHS of the second equality have orders
$\leq 1/2$ and $1$, respectively, so they produce terms of order
at most $j-3/2$ when hit by $r\Dt^{2j-3}$ and thus can be discarded.
Continuing
\begin{align}
    \begin{split}
        [\Dt, \GInvP^{lm}\partial_l (a_2 \partial_m \cdot ) ] r &\approx
        \GInvP^{lm} \Dt \partial_l (a_2 \partial_m r) 
        - \GInvP^{lm} \partial_l(a_2 \partial_m \Dt r)
        \\
        & = 
        \GInvP^{lm}\Big( a_2 \Dt \partial_l \partial_m r 
        - a_2 \partial_l \partial_m \Dt r \Big )
        \\
        & 
        \ \ \ +
        \GInvP^{lm} \Big( \Dt \partial_l a_2 \partial_m r
        + \partial_l a_2 \Dt \partial_m r 
        + \Dt a_2 \partial_m \partial_l r 
        - \partial_l a_2 \partial_m \Dt r \Big ).
    \end{split}
    \nonumber
\end{align}
All the terms inside the second parenthesis have orders at most $1$
(thus giving order at most $j-3/2$ when hit by $r\Dt^{2j-3}$) and
can be discarded. The terms in the first parenthesis 
give $\GInvP^{lm} a_2 [\Dt, \partial_l \partial_m]r$. Continuing
\begin{align}
    \begin{split}
         [\Dt, \GInvP^{lm}\partial_l (a_2 \partial_m \cdot ) ] r &\approx
        \GInvP^{lm} a_2 [\Dt, \partial_l \partial_m]r
        =
        \GInvP^{lm} a_2 \Big ( [\Dt,\partial_l] \partial_m r
        + \partial_l \left( [\Dt, \partial_m] r \right) \Big)
        \\
        & =
        \GInvP^{lm} a_2 \Big ( 
        -\frac{a_0}{\kappa \br} \GInvP^{pq} 
        \partial_l v_q \partial_p \partial_m r 
        + \frac{\br^{1+\frac{2}{\kappa}}}{(v^0)^3}
        v^p \partial_l r \partial_p \partial_m r \Big )
        \\
        &\quad \  + 
        \GInvP^{lm} a_2 \partial_l \Big ( 
        -\frac{a_0}{\kappa \br} \GInvP^{pq} 
        \partial_m v_q \partial_p  r 
        + \frac{\br^{1+\frac{2}{\kappa}}}{(v^0)^3}
        v^p \partial_m r \partial_p  r \Big ).
    \end{split}
    \nonumber 
\end{align}
The second term in the first parenthesis has order $1$. The second
term in the second parenthesis produces, after differentiation by 
$\partial_l$, terms of order at most $1$. Hence, the second
terms in both parenthesis give order at most $j-3/2$ after we apply
$r\Dt^{2j-3}$ and belong to $F_{2j}$. Moreover, when $\partial_l$ in 
front of the second parenthesis hits the zero order coefficients
in the first term it gives terms of order at most $1$ which 
can again be discarded; when it hits $\partial_p r$ it produces a term that
can be combined with the first term in the first parenthesis.
Therefore, we have
\begin{align}
    \begin{split}
        \Dt^{2j} r & \approx 
        r\Dt^{2j-3} \Big ( 
        \GInvP^{ml} \partial_l (a_2 \partial_m \Dt r ) \Big )
        -r \Dt^{2j-3} 
        \Big ( 
        \frac{a_0 a_2}{\kappa \br} \GInvP^{lm}
        \partial_l \partial_m v_q \GInvP^{pq} \partial_p r 
        \Big)
        \\
        & \ \ \ 
        -2 r \Dt^{2j-3} 
        \Big (
         \frac{a_0 a_2}{\kappa \br} \GInvP^{lm}
         \GInvP^{pq} \partial_m v_q \partial_p \partial_l r
         \Big ).
    \end{split}
    \label{E:Dt_2j_r_main_terms}
\end{align}
The last term on RHS \eqref{E:Dt_2j_r_main_terms}
has a $\partial v \partial^2 r$ factor. Hence it produces,
after application of $r \Dt^{2j-3}$ either non-endpoint terms
or terms of order $< j-1$, so it belongs to $F_{2j}$.

We now analyze the second term on RHS \eqref{E:Dt_2j_r_main_terms}.
We distribute $\Dt^{2j-3}$. Whenever at least one $\Dt$ hits one 
of the zero order factors it results in a term of order $\leq j-3/2$
that can be absorbed into $F_{2j}$. Hence we are left with
\begin{align}
    \begin{split}
        & -r
        \frac{a_0 a_2}{\kappa \br} \GInvP^{lm}\GInvP^{pq}
         \Dt^{2j-3} 
         \Big ( 
         \partial_l \partial_m v_q  \partial_p r 
        \Big) 
 =
         - r
        \frac{a_0 a_2}{\kappa \br} \GInvP^{lm}\GInvP^{pq}
         \sum_{\ell=0}^{2j-3} \binom{2j-3}{\ell} 
         \Dt^{2j-3-\ell} \partial_l \partial_m v_q
         \Dt^\ell \partial_p r.
    \end{split}
    \nonumber
\end{align}
The terms in the sum with $l \neq 0$ belong to $F_{2j}$. For, after commuting
$\Dt$ with $\partial$, we obtain
either lower order terms or $\partial v \partial^2 r$ factors, so we
are left with
\begin{align}
    \begin{split}
        -r
        \frac{a_0 a_2}{\kappa \br} \GInvP^{lm}\GInvP^{pq}
         \Dt^{2j-3} \partial_l \partial_m v_q
         \partial_p r
         =&\,
        -r
        \frac{a_0 a_2}{\kappa \br} \GInvP^{lm}\GInvP^{pq}
         \partial_l \partial_m \Dt^{2j-3}  v_q 
         \partial_p r
         \\
         &\,
        -r
        \frac{a_0 a_2}{\kappa \br} \GInvP^{lm}\GInvP^{pq}
        [\Dt^{2j-3}, \partial_l \partial_m]  v_q
         \partial_p r.
    \end{split}
    \nonumber 
\end{align}
The first term on the RHS belongs to $\tilde{L}_1 s_{sj-2}$. The second
term on the RHS belongs to $F_{2j}$. This can be seen by computing
the commutator in similar fashion to what we did
to compute $[\Dt, \GInvP^{lm}\partial_l (a_2 \partial_m \cdot ) ]$ 
(in fact, $[\Dt^{2j-3}, \GInvP^{lm}\partial_l (a_2 \partial_m \cdot ) ] r$ and
$[\Dt^{2j-3}, \partial_l \partial_m]$ are the same modulo
lower terms).

It remains to analyze the first term on RHS \eqref{E:Dt_2j_r_main_terms}.
We have 
\begin{align}
    \begin{split}
        r\Dt^{2j-3} \Big ( 
        \GInvP^{ml} \partial_l (a_2 \partial_m \Dt r ) \Big )
        & =
         \GInvP^{ml} \partial_l (a_2 \partial_m \Dt^{2j-2} r )
         +
         r
         [\Dt^{2j-3}, \GInvP^{lm}\partial_l (a_2 \partial_m \cdot ) ]
         \Dt r.
    \end{split}
    \nonumber 
\end{align}
The first term on the RHS belongs to $\tilde{L}_1 s_{2j-2}$.
The term of order $j-1$ from the second term on the RHS is non-endpoint, as 
it comes from combining $\partial v$ from the commutator with $\partial v$
from $\Dt r$.

We next consider the second term on \eqref{E:s_2j_explicit}. We have
\begin{align}
    \begin{split}
         - \frac{a_0}{\kappa \br} \GInvP^{lm} \partial_l r
        \Dt^{2j-1} v_m  = 
        & - \frac{a_0}{\kappa \br} \GInvP^{lm} \partial_l r
        \Dt^{2j-3} \Dt (-a_2 \partial_m r)
        \\
        = & \
        \frac{a_0}{\kappa \br} \GInvP^{lm} \partial_l r
        \Dt^{2j-3}\Big( a_2 \partial_m \Dt r + [\Dt, a_2 \partial_m] r
        \Big).
    \end{split}
    \label{E:s_2j_explicit_second_term}
\end{align}
Consider the second term on RHS  \eqref{E:s_2j_explicit_second_term}.
Using arguments similar to above, we can show that all terms
belong to $F_{2j}$, except for the term that corresponds
to all $\Dt^{2j-3}$ hitting the $\partial v$ from the commutator
$[\Dt,\partial_m]$, i.e., except for
\begin{align}
    \begin{split}
   - a_2\left(\frac{a_0}{\kappa \br} \right)^2 
   \GInvP^{lm} \partial_l r
   \GInvP^{pq} \Dt^{2j-3} \partial_m  v_q \partial_p r
   =&- a_2\left(\frac{a_0}{\kappa \br} \right)^2 
   \GInvP^{lm} \partial_l r
        \GInvP^{pq}  \partial_m \Dt^{2j-3} v_q \partial_p r
    \\
    &
    - a_2
    \left(\frac{a_0}{\kappa \br} \right)^2 
   \GInvP^{lm} \partial_l r
        \GInvP^{pq} [\Dt^{2j-3}, \partial_m] v_q \partial_p .
    \end{split}
    \nonumber 
\end{align}
The commutator term can again be shown to belong to $F_{2j}$ using the
same sort of calculations as above. Modulo terms that can be
absorbed into $F_{2j}$, the remaining term can be written as
\begin{align}
    \begin{split}
        a_2 \frac{a_0}{\kappa \br}
        \GInvP^{lm} \partial_l r\partial_m
        \Big(
        -\frac{a_0}{\kappa \br}
        \GInvP^{pq}   \Dt^{2j-3} v_q \partial_p r
        \Big)
        = &\,
        \frac{ a_2}{\kappa}
        \GInvP^{lm} \partial_l r\partial_m
        \Big(
        -\frac{a_0}{\kappa \br}
        \GInvP^{pq}   \Dt^{2j-3} v_q \partial_p r
        \Big)
        \\
        & +
        r a_3
        \GInvP^{lm} \partial_l r\partial_m
        \Big(
        -\frac{a_0}{\kappa \br}
        \GInvP^{pq}   \Dt^{2j-3} v_q \partial_p r
        \Big),
    \end{split}
    \nonumber
\end{align}
where we used \eqref{E:a_3_def_and_relation}. The first term
on the RHS belongs to $\tilde{L}_1 s_{2j-2}$ and the second
one can be absorbed into $F_{2j}$.

The first term on RHS \eqref{E:s_2j_explicit_second_term}
is treated with similar ideas. We notice that the
top order term in that expression is 
\begin{align}
    \begin{split}
        & \frac{a_0}{\kappa \br} 
        \GInvP^{lm} \partial_l r
       a_2 \partial_m  \Dt^{2j-2}  r 
       =\,
       \frac{a_2}{\kappa} \GInvP^{lm} \partial_l r
        \partial_m  \Dt^{2j-2}  r
       +ra_3 \GInvP^{lm} \partial_l r
        \partial_m  \Dt^{2j-2}  r .       
    \end{split}
    \nonumber
\end{align}
The first term belongs to $\tilde{L}_1 s_{sj-2}$ and the second
one to $F_{2j}$.

The case $j=2$ is done separately (since the definition of $s_2$ is different, recall \eqref{E:Good_variables_def}), but it 
follows essentially the same steps as above. Finally, 
the proof for $G_{2j}$ is done with the same type of calculations
employed above and we omit it for the sake of brevity.
\end{proof}

To continue our analysis, we need some coercivity estimates for the $\tilde L_1$,
respectively $\tilde L_2+ \tilde L_3$.
\begin{lemma}
\label{L:Elliptic_estimates}
Assume that $A$ is small. Then 
\begin{subequations}
\begin{align}
\norm{s}_{H^{2,\frac{1}{2\kappa} + \frac{1}{2}}} & \lesssim \norm{\tilde{L}_1 s}_{H^{0,\frac{1}{2\kappa} - \frac{1}{2}}}
+\norm{s}_{L^2(r^\frac{1-\kappa}{\kappa})},
\label{coercive-L1}
\\
\norm{w}_{H^{2,\frac{1}{2\kappa}+1}} & \lesssim \norm{(\tilde{L}_2+\tilde L_3)w}_{H^{0,\frac{1}{2\kappa}} } 
+  \norm{w}_{L^2( r^\frac{1}{\kappa})}.
\label{coercive-L23}
\end{align}
\end{subequations}
\end{lemma}

Here we remark that the lower order terms on the right play no role in the proof, and can be omitted  if $(s,w)$ are assumed to have small support (by Poincare's inequality), or if we use homogeneous norms on the left. 

As a consequence of the second estimate above, we have
\begin{corollary}
Assume that $A$ is small. Then
\[
\norm{w}_{H^{2,\frac{1}{2\kappa}+1}}  \lesssim \norm{\tilde{L}_2 w}_{H^{0,\frac{1}{2\kappa}} } + \norm{\curl w}_{H^{1,\frac{1}{2\kappa}+1}}
+  \norm{w}_{L^2( r^\frac{1}{\kappa})}.
\]
\end{corollary}

In Section~\ref{S:Regular_solutions} will also need the following straightforward alternative form of the 
above result:

\begin{corollary}
Assume that $B$ is small. Then the same result as in Lemma~\ref{L:Elliptic_estimates}
holds for the operators $L_1$, respectively $L_2+L_3$.
\end{corollary}

Here the smallness condition on $B$ allows us to treat the differences  $\tilde L_1 - L_1$, $\tilde L_2 - L_2$, $\tilde L_3 - L_3$ perturbatively.

\begin{proof}
We start with two simple observations. First of all, using a partition of unity 
one can localize the estimates to a small ball. We will assume this is done, and further
we will consider the interesting case where this ball is around a boundary point $x_0$; the analysis is standard elliptic otherwise. We can assume that at $x_0$ on the boundary  we have $\nabla r(x_0) = e_n$ so that in our small ball we have
\begin{align}
|\nabla r - e_n| \lesssim A \ll 1.
\label{E:Derivative_speed_near_boundary_e_n}
\end{align}

Secondly,  the smallness condition on $A$
guarantees that the coefficients $G$ and $a_2$ have a small variation in a small 
ball, and we can freeze these coefficients modulo perturbative errors. Hence, 
we will simply freeze them, and assume that $a_2$ and $G$ are constant. Then $a_2$
only plays a multiplicative role, and will be set to $1$ for the rest of the argument.

A preliminary step in the proof is to observe that we have  the weaker bounds
\begin{subequations}
\begin{align}
\norm{s}_{H^{2,\frac{1}{2\kappa} + \frac{1}{2}}} & \lesssim \norm{\tilde{L}_1 s}_{H^{0,\frac{1}{2\kappa} - \frac{1}{2}}} + \norm{s}_{H^{1,\frac{1}{2\kappa} - \frac{1}{2}}},
\nonumber
\\
\norm{w}_{H^{2,\frac{1}{2\kappa} + 1 }} & \lesssim \norm{(\tilde{L}_2 +\tilde L_3)w}_{H^{0,\frac{1}{2\kappa}  }}
+
\norm{w}_{H^{1,\frac{1}{2\kappa}}}.
\nonumber
\end{align}
\end{subequations}
These bounds can be proved in a standard elliptic fashion by integration by parts, e.g. in the case of the first bound one simply starts with the integral representing $\| \tilde L_1 s \|_{H^{0,\frac{1}{2\kappa}}}^2$ and exchange derivatives between the two factors.
The details are left for the reader.

In view of the above bounds, it suffices to show that
\begin{subequations}
\begin{align}
&\norm{s}_{H^{1,\frac{1}{2\kappa} -\frac{1}{2} }}  \lesssim \norm{\tilde{L}_1 s}_{H^{0,\frac{1}{2\kappa}- \frac{1}{2}} }
+\norm{s}_{L^2(r^\frac{1-\kappa}{2\kappa})},
\label{E:Coercive_estimate_s_short}
\\
&\norm{w}_{H^{1,\frac{1}{2\kappa} }}  \lesssim \norm{(\tilde{L}_2+\tilde L_3) w}_{H^{0,\frac{1}{2\kappa} }} 
+  \norm{w}_{L^2( r^\frac{1}{\kappa})}.
\label{E:Coercive_estimate_w_short}
\end{align}
\end{subequations}

For \eqref{E:Coercive_estimate_s_short}, compute
\begin{align}
\begin{split}
\int_{\MovingDomT} 
r^{\frac{1-\kappa}{\kappa}}
\partial_n s \tilde{L}_1 s \, dx 
& =  \int_{\MovingDomT} 
r^{\frac{1-\kappa}{\kappa}}
 \partial_n s
  \GInvP^{ij} a_2 \left( r \partial_i  \partial_j s
 + \frac{1}{\kappa}  \partial_i r \partial_j s  \right)
\, dx
 \\
&= -\frac{1}{2} \int_{\MovingDomT} r^\frac{1}{\kappa} a_2 
\partial_n \left( \GInvP^{ij}  \partial_i s \partial_j s\right)  \, dx
+ \frac{1}{2}  \int_{\MovingDomT} r^\frac{1}{\kappa} a_2 
\partial_n  \GInvP^{ij}  \partial_i s \partial_j s  \, dx
\\
& \  \quad -
\int_{\MovingDomT}   r^\frac{1}{\kappa} \partial_n s  
\partial_i(a_2  \GInvP^{ij})  \partial_j s \, dx
\\
& \gtrsim 
\int_{\MovingDomT} r^\frac{1 - \kappa}{\kappa} a_2 
 \GInvP^{ij}  \partial_i s \partial_j s  \, dx
 + 
 \int_{\MovingDomT} r r^\frac{1 - \kappa}{\kappa}  |\partial s|^2\, dx.
\end{split}
\nonumber
\end{align}
which suffices by the Cauchy-Schwarz inequality.

Now we consider \eqref{coercive-L23}. As discussed above, we set $a_2=1$
and assume $G$ is a constant matrix. We recall that 
 $\tilde{L}_2$ has the form
\begin{align}
    \begin{split}
    (\tilde{L}_2 w)_i &= 
     G^{ml} \left( \partial_i(r\partial_m w_l) 
    + \frac{1}{\kappa} \partial_m r \partial_i w_l \right)
    % \\
    %  &= a_2 r G^{ml}\partial_i \partial_m w_l 
    % + a_2 \partial_i r G^{ml} \partial_m w_l 
    % + a_2 \frac{1}{\kappa} \partial_m r G^{ml} 
    % \partial_i w_l .
    \end{split}
    % \nonumber
\end{align}
while $\tilde L_3$ is given by 
\begin{align}
    \begin{split}
    (\tilde{L}_3 w)_i &=  r^{-\frac{1}{\kappa}}
    G^{ml} \partial_l \left( r^{1+\frac{1}{\kappa}}
    (\partial_m w_i - \partial_i w_m ) \right)
    % \\
    % &= a_2 r G^{ml} \partial_l \partial_m w_i 
    % - a_2 r G^{ml} \partial_l \partial_i w_m
    % \\
    % & 
    % +a_2 \left(1+\frac{1}{\kappa} \right) \partial_l r
    % G^{ml} \partial_m w_i 
    % -a_2 \left(1+\frac{1}{\kappa} \right) \partial_lr  G^{ml} 
    % \partial_i w_m.
    \end{split}
    % \nonumber
\end{align}
Then a direct computation shows that
\[
\begin{aligned}
r^{\frac{1}{\kappa}}((\tilde L_2+ \tilde L_3) w)_i 
= & \ \partial_l ( G^{ml} r^{1+\frac{1}{\kappa}} \partial_m w_i) + 
r^\frac1{\kappa} (\partial_l r G^{lm} \partial_m w_i - \partial_l G^{lm} \partial_i w_m) 
\end{aligned}
\]
We will take advantage of the covariant nature of this operator in order to 
simplify it. Interpreting $G$ as a dual metric and $w$ as a one form, we see that the above operator 
viewed as a map from one forms to one forms is invariant with respect to linear changes of coordinates. Here we are interested in changes of coordinates which preserve the 
surfaces $x_n = const$. But even with this limitation, it is possible to choose a linear change of coordinates, namely the semigeodesic coordinates relative to the surface $x_n = 0$,
\[
y' = Ax' + b x_n, \qquad y_n = x_n
\]
so that the metric $G$ becomes a multiple of the identity. Then the estimate \eqref{E:Coercive_estimate_w_short} reduces to its euclidean counterpart, which 
is discussed in detail in \cite{IT-norel} in the corresponding nonrelativistic context.
\end{proof}

To finish the proof of Theorem~\ref{T:Energy_norm_equivalence}, we will establish 
\begin{align}
\begin{split}
    \norm{(s_{2j-2},w_{2j-2})}_{\mathcal{H}^{2k-2j+2}}
    & \lesssim 
     \norm{(s_{2j},w_{2j})}_{\mathcal{H}^{2k-2j}}
           + \varepsilon
      \norm{(r,v)}_{\mathcal{H}^{2k}},
           \, 1 \leq j \leq k,
\end{split}
     \label{E:Concatenating_bounds}
\end{align}
where $\varepsilon>0$ is sufficiently small. We are using  $\varepsilon$ here to include 
two types of small error terms: (a) the terms that we estimate using $O(A)$ as well as
(b) the terms that have an extra factor of $r$ and for which we can
use smallness of $r$ near the boundary; the latter type arise
from the last term of \eqref{E:r_eq}. Concatenating these  estimates we then obtain the conclusion of the theorem.

To prove \eqref{E:Concatenating_bounds}, we first
consider 
$\norm{(F_{2j},G_{2j})}_{\mathcal{H}^{2k-2j}}$.
Using our interpolation inequalities, the non-endpoint
property, and the structure of 
$(F_{2j},G_{2j})$ described in 
in Lemma \ref{L:Recurrence_relations}, we obtain
\begin{align}
    \norm{(F_{2j},G_{2j})}_{\mathcal{H}^{2k-2j}}
    \lesssim \varepsilon \norm{(r,v)}_{\mathcal{H}^{2k}}.
    \nonumber
\end{align}

It remains to handle the term $\norm{(s_{2j},w_{2j})}_{\mathcal{H}^{2k-2j}}$. For 
$j=k$ the desired estimate is 
a direct consequence
of Lemma \ref{L:Elliptic_estimates}.

We move to treat the case $2 \leq j < k$. 
The idea is to apply Lemma \ref{L:Elliptic_estimates}
with $s_{2j-2}$ and $w_{2j-2}$ replaced by suitable
weighted derivatives of themselves. More precisely,
we set
\begin{equation*}
\left\{
    \begin{aligned}
    &s := L s_{2j-2}\\
    &w := L w_{2j-2},
    \nonumber
\end{aligned}
\right.
\end{equation*}
where 
\begin{align}
    L = r^a \partial^b, \quad 2a \leq b \leq 2(k-j).
    \nonumber
\end{align}
Applying $L$ to \eqref{E:Recurrence_s}, we obtain
\begin{align}
     L s_{2j} & = \tilde{L}_1  L s_{2j-2} + [L,\tilde{L}_1 ]
     L s_{2j-2}
     + L F_{2j}.
 \nonumber
\end{align}
The term $L F_{2j}$ can again be dealt with using 
Lemma \ref{L:Recurrence_relations}, as above. 
Thus we focus on
the commutator. To analyze it, we consider induction on $a$,
starting at $a=0$, and observe the following:
\begin{itemize}
    \item All terms where at least one $r$ factor gets differentiated
    twice are non-endpoint terms and can be estimated
    by interpolation.
    \item The terms where two $r$ factors are differentiated
    are handled by the induction on $a$.
    \item Terms where only one $r$ gets differentiated are also handled by induction on $a$ unless $a=0.$
\end{itemize}
Therefore, all terms in the commutator where $a>0$ are 
perturbative terms. We now focus on the case $a=0$.

Consider a frame $(x^\prime, x^n)$ in Minkowski space that is 
adapted to a point near the boundary in the sense that
\begin{align}
    |\partial^\prime r| \lesssim A, \quad 
    |\partial_n r - 1 | \lesssim A.
    \nonumber
\end{align}
Then, all terms in the commutator with tangential derivatives only
are error terms. For terms involving $\partial_n$, we find
\begin{align}
    \begin{split}
    [\partial_n^b,\tilde{L}_1] s & \approx 
    b a_2 \GInvP^{ij} \partial_i \partial_j \partial_n^{b-1}s 
    \\
    & \approx b a_2 \GInvP^{ij} \partial_i r \partial_j \partial_n^b s
    + b a_2 \GInvP^{n i^\prime} \partial_{i^\prime} \partial_n^b s
    + b a_2 \GInvP^{i^\prime j^\prime} \partial_{i^\prime} 
    \partial_{j^\prime} \partial_n^{b-1}s,
    \end{split}
    \nonumber
\end{align}
where primed indices run from $1$ to $n-1$. The last two terms 
on the RHS can
be treated by yet another induction, this time over $b$. The first term on the RHS can be combined back with $\tilde{L}_1$, yielding
$\partial_n^b \tilde{L}_1 \approx \tilde{L}^b_1 \partial_n^b$, where
\begin{align}
    \tilde{L}_1^b = 
    r a_2 \GInvP^{ij} \partial_i  \partial_j s
+ a_2 \left(\frac{1}{\kappa} + b \right) \GInvP^{ij} \partial_i r \partial_j s.
\nonumber
\end{align}
The operator $\tilde{L}_1^b$ has a similar structure to $\tilde{L}_1$, and an inspection in the proof of Lemma \ref{L:Elliptic_estimates}
shows that the corresponding coercive estimate for $s$ holds with 
$\tilde{L}_1^b$ in place of $\tilde{L}_1$.

The above argument works for $j\geq 2$ in that \eqref{E:Recurrence_s}
is valid only for $j\geq 2$. However, a minor change in the above
using the definition $s_2$ yields the result also for $j=1$. This
takes care of the $s$ terms in \eqref{E:Concatenating_bounds};
the proof for the $w$ terms is similar.
\end{proof}

\subsection{Energy estimates}
In this Section we establish the following.

\begin{theorem}
The energy functional $E^{2k}$ defined in \eqref{E:Total_energy_def} satisfies the following estimate:
\begin{align}
\frac{d}{dt} E^{2k}(r,v) \lesssim_A B \norm{ (r,v) }_{\mathcal{H}^{2k}}^2.
\nonumber
\end{align}
\end{theorem}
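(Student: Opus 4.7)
The plan is to estimate the wave and transport components of $E^{2k}$ separately, using the linearized energy estimate \eqref{ee-lin} for the former and a direct commutator analysis for the latter, with everything ultimately reduced to perturbative source term bounds via the interpolation inequalities of Section~\ref{S:Embedding_and_interpolation}.

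\textbf{Step 1 (wave part).} For each $0 \le j \le k$, the good variables $(s_{2j}, w_{2j})$ solve the linearized system \eqref{E:Good_variables_eq} with source $(f_{2j}, g_{2j})$ given explicitly by \eqref{E:Source_s_eq}--\eqref{E:Source_w_eq}. Applying \eqref{ee-lin} and the Cauchy--Schwarz inequality on the inhomogeneous term yields
\[
\left| \frac{d}{dt}\|(s_{2j}, w_{2j})\|_{\H}^2 \right| \lesssim B \, \|(s_{2j}, w_{2j})\|_{\H}^2 + \|(s_{2j}, w_{2j})\|_{\H} \, \|(f_{2j}, g_{2j})\|_{\H}.
\]
By Theorem~\ref{T:Energy_norm_equivalence} the first term is already $\lesssim_A B \|(r,v)\|_{\H^{2k}}^2$, so the core of the estimate is to prove the source bound
\begin{equation}\label{plan-src}
\|(f_{2j}, g_{2j})\|_{\H} \lesssim_A B \, \|(r,v)\|_{\H^{2k}}, \qquad 1 \le j \le k.
\end{equation}

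\textbf{Step 2 (source term bound, the main obstacle).} This is the hardest step. Each of $f_{2j}, g_{2j}$ is a finite linear combination of multilinear expressions in $r$ and $\partial v$ obtained by expanding commutators of $\Dt$ with $\partial$, $r$, $\GInv$, $a_i$, etc. I would run the bookkeeping scheme of Section~\ref{S:Bookkeeping} to verify that every such expression has the right order and the right number of $r$ weights so that, after writing
\[
\|(f_{2j}, g_{2j})\|_{\H} \lesssim \sum \Big\| r^{a} \prod_{l} \partial^{n_l} r \prod_{m} \partial^{p_m} v \Big\|_{L^2(r^{(1-\kappa)/\kappa})},
\]
Hölder's inequality together with Propositions~\ref{p:interpolation}, \ref{p:interpolation-c}, \ref{p:interpolation-d} allows one factor to carry the $B$ norm (i.e.\ $\|\partial r\|_{\tilde C^{1/2}}$ or $\|\partial v\|_{L^\infty}$) while the remaining factors are bounded by fractional powers of $A$ and $\|(r,v)\|_{\H^{2k}}$ interpolated between an $L^\infty$ control and the top $\H^{2k}$ norm. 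Two features make this work: the non-endpoint structure (every source term from a commutator contains at least two differentiated factors, so one factor remains available to absorb the $B$ weight), and the extra $r$-factors produced by commuting $[\Dt,\partial]$ and by the last term of \eqref{E:r_eq}, which compensate for lost derivatives. The verification will mirror very closely the scheme used in the proof of the $\lesssim$ direction of Theorem~\ref{T:Energy_norm_equivalence}, so that both the endpoint and non-endpoint cases are controlled the same way.

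\textbf{Step 3 (transport part).} Differentiating $\|\Vort\|^2_{H^{2k-1,k+1/\kappa}}$ amounts to testing \eqref{E:Transport_vorticity_2k} against $r^{2k+2/\kappa}\Vort_{2k}$ and summing over $|b|\le 2k-1$ with $b-a=k-1$. The formula \eqref{E:Derivative_moving_domain} for moving domains contributes $\int \Vort_{2k}^2 \partial_i(v^i/v^0)\, dx$, which is bounded by $B \|(r,v)\|_{\H^{2k}}^2$. The LHS of \eqref{E:Transport_vorticity_2k} minus the $\Dt$ part is a zero-order multiplier of size $O(B)$, so it produces a Gronwall term. The commutator source $h_{2k}$ from \eqref{E:Source_vort_2k_eq} is handled by the same interpolation/bookkeeping mechanism as Step 2: each commutator either places a derivative on a coefficient (producing a factor controlled by $B$) or on a factor of $r$ (producing compensating weights), with the remaining factors absorbed in $\|(r,v)\|_{\H^{2k}}$.

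\textbf{Step 4 (combine).} Adding the bounds from Steps~1 and~3 gives
\[
\frac{d}{dt} E^{2k}(r,v) \lesssim_A B \, \|(r,v)\|_{\H^{2k}}^2,
\]
which is the desired estimate. The main obstacle throughout is the careful multilinear accounting in Step~2; the rest is either routine linearized energy analysis or a transport-type commutator estimate that is standard once the right weighted space is chosen.
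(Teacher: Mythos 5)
Your proposal follows essentially the same architecture as the paper's proof: apply the linearized-equation energy estimate \eqref{ee-lin} (or rather, the underlying energy identity) to each good variable pair $(s_{2j},w_{2j})$, apply a weighted transport estimate to $\Vort_{2k}$, thereby reducing the whole theorem to proving that the sources $(f_{2j},g_{2j})$ and $h_{2j}$ are perturbative, and then establish the source bound by the bookkeeping scheme together with the interpolation inequalities of Section~\ref{S:Embedding_and_interpolation}. The paper packages your Step~2 into a structural lemma (Lemma~\ref{l:fg-2k}) describing the precise multilinear form, order, and non-endpoint property of the sources; that lemma is where all the real work lives.

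One point in Step~2 deserves to be flagged. You justify the non-endpoint property by asserting that ``every source term from a commutator contains at least two differentiated factors, so one factor remains available to absorb the $B$ weight.'' That is correct for $f_{2k}$, but it is not correct for $g_{2k}$. In the reduced expression \eqref{E:Source_w_eq-red}, the two terms $\Dt^{2k-1}[\partial_i,\Dt]r$ and $-\tfrac{a_0}{\kappa\br}\GInvP^{jl}\partial_j r\,\partial_i\Dt^{2k-1}v_l$ are each individually endpoint: using $[\partial_i,\Dt]r = \partial_i(v^j/v^0)\partial_j r$ together with \eqref{E:coefficient_commutator_Dt_partial} and the leading-order expansion $\Dt^{2k-1}v_l \approx r^{k-1}\partial^{2k-1}r$, each reduces at top order to a product $r^{k-1}(\partial r)(\partial^{2k}r)$, a single highest-order factor with only undifferentiated weights, which cannot be bounded by $B\|(r,v)\|_{\H^{2k}}$. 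The non-endpoint structure of $g_{2k}$ only emerges after a leading-order \emph{cancellation} between these two terms, yielding the commutator $\bigl[\Dt^{2k-1},\tfrac{a_0}{\kappa\br}\GInvP^{jl}\partial_j r\,\partial_i\bigr]v_l$. This cancellation is exactly why the definition of $s_{2k}$ in \eqref{E:Good_variables_def} includes the Alinhac-type correction $-\tfrac{a_0}{\kappa\br}\GInvP^{ij}\partial_i r\,\Dt^{2k-1}v_j$ rather than being just $\Dt^{2k}r$: the second of the two offending terms in $g_{2k}$ comes from applying $a_2\partial_i$ to this correction. If you run Step~2 taking the ``all commutator terms are non-endpoint'' claim at face value, $g_{2k}$ will produce a term you cannot close, so this cancellation is not optional bookkeeping but the structural heart of the estimate.
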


\begin{proof}
In view of equations \eqref{E:s_eq}-\eqref{E:w_eq}
and \eqref{E:Transport_vorticity_2k}, the the energy estimates for the linearized equation in Section \ref{S:Linearized}, and estimates for transport equations,
it suffices to show that
the terms $f_{2k}$, $g_{2k}$ and $h_{2k}$, given by \eqref{E:Source_s_eq}, \eqref{E:Source_w_eq}, 
and \eqref{E:Source_vort_2k_eq}, respectively, 
are perturbative, i.e., they satisfy the estimate
\begin{align}
\norm{ (f_{2k}, g_{2k} ) }_{\mathcal{H}}
+ \norm{ h_{2k}}_{L^2( r^\frac{1}{\kappa})}
\lesssim B \norm{(r,v)}_{\mathcal{H}^{2k}}.
\nonumber
\end{align}
To prove this bound we need to understand the structure of $(f_{2k},g_{2k})$, respectively $h_{2k}$:
\begin{lemma}\label{l:fg-2k}
Let $k \geq 1$. Then source terms $f_{2k}$ and $g_{2k}$ in the linearized equations \eqref{E:Good_variables_eq} for $(s_{2k},w_{2k})$, given by \eqref{E:Source_s_eq}-\eqref{E:Source_w_eq}
are multilinear expressions in $(r,\nabla v)$, with coefficients which are smooth functions
of $(r,v)$,  which have order $\leq k-\frac12$, respectively $\leq k$, with exactly $2k+1$ derivatives,
and which are not endpoint, in the sense  that there is no single factor in $f_{2k}$,
respectively $g_{2k}$ which has order larger that $k-1$, respectively $k-\frac12$.

Similarly, the source term $h_{2k}$  in the vorticity transport equation \eqref{E:Transport_vorticity_2k},
given by \eqref{E:Source_vort_2k_eq}, has the same properties as $g_{2k}$ above.
\end{lemma}

Once the lemma is proved, arguing similarly to Section \ref{S:Energy_coercivity}, we see that this
suffices to apply our interpolation results in Propositions~\ref{p:interpolation-g}, \ref{p:interpolation-c}
and \ref{p:interpolation-d} and obtain the desired bound. Here we remark that a scaling analysis 
shows that in the interpolation estimates we need to use at most one $B$ control norm, with 
equality exactly in the case of terms of highest order. One should also compare with the situation
in the similar computation in \cite{IT-norel}, where no lower order terms appear. Hence, the poof of the theorem is concluded once we prove the above lemma.

\begin{proof}[Proof of Lemma~\ref{l:fg-2k}]
Consider first $f_{2k}$. The fact that all terms in $f_k$ have order at most $k-\frac12$ is obvious.
The non-endpoint property can be understood as asking that there are no derivatives of order $2k+1$,
and that, in addition, for the terms of maximum order, they have at least two factors of the form
$\partial^{2+} r$ or $\partial^{1+}v$. Notably, this excludes any terms of the form 
\[
 f(r,v) r^{k+1-j}(\nabla r)^j  \partial^{2k+1-j} v, \qquad 0 \leq j \leq k+1. 
\]
A similar reasoning applies for $g_{2k}$ and $h_{2k}$, where the forbidden terms 
are those with a factor with $2k+1$ derivatives, as well as those of maximum order 
of the form
\[
f(r,v) r^{k-j}(\nabla r)^j  \partial^{2k+1-j} r, \qquad 0 \leq j \leq k. 
\]

We start with a simple observation, which is that, if in \eqref{E:Source_s_eq} or \eqref{E:Source_w_eq}, any derivative falls on a coefficient such as $\GInv$, $a_0$, $a_1$, or $a_2$, then we obtain lower order terms
which automatically satisfy the above criteria. Thus, for the purpose of this Lemma we can treat
these coefficients as constants.

A second observation is that there are no factors with $2k+1$ derivatives in either $s_{2k}$
or $w_{2k}$, due to the commutator structures present in both \eqref{E:Source_s_eq} or \eqref{E:Source_w_eq}. This directly allows us to discard all lower order terms, and in particular those containing $a_1$ and $a_3$. By the same token we  can set $a_0=1$ and $\br = 1$.

Given the above observations, it suffices to consider the reduced expressions
\begin{align}
\label{E:Source_s_eq-red}
&f_{2k}^{reduced} =  \ \GInvP^{ij} [ r  \partial_i, \Dt^{2k} ] v_j
-\frac{1}{\kappa} \GInvP^{ij}  \Dt \left( \partial_i r \right) \Dt^{2k-1} v_j 
\\
\label{E:Source_w_eq-red}
&(g_{2k}^{reduced})_i  =  \ a_2( \Dt^{2k-1} [ \partial_i, \Dt] r - 
\frac{a_0}{\kappa\br }\GInvP^{jl} \partial_j r \partial_i \Dt^{2k-1} v_l)
\\&  \nonumber
\qquad \qquad \quad + a_2(
[\partial_i, \Dt^{2k-1} ] \Dt r -  \frac{1}{\kappa} \GInvP^{ml} \partial_i \partial_m  r  \Dt^{2k-1} v_l),
\end{align}

Consider $f_{2k}^{reduced}$ first.
When commuting $\partial$ and $\Dt^{2k}$,
this produces at least one $\partial v$, so $[r \partial_i, \Dt^{2k} ] v_j$
is not an endpoint term. Similarly, $\Dt \left( \partial_i r \right)$ has order $1/2$
so the second expression is also not an endpoint term.

We now investigate $g_{2k}^{reduced}$. Neither of the first two terms is 
perturbative, but we have a leading order cancellation between them,
based on the relations
\begin{align}
[\Dt, \partial_i]= - \partial_i \left( \frac{v^j}{v^0} \right) \partial_j,
\nonumber
\end{align}
and
\begin{align}
\partial_i \left( \frac{v^j}{v^0} \right) = 
\frac{a_0}{\kappa \br } 
\GInvP^{jl} \partial_i v_l 
- \frac{\br^{1+\frac{2}{\kappa}}}{(v^0)^3} v^j  \partial_i r.
\label{E:coefficient_commutator_Dt_partial}
\end{align}

The contribution of the second term is lower order and thus perturbative. The 
contribution of the first term is combined with the second term in \eqref{E:Source_w_eq-red}
to obtain a commutator structure
\[
\left[\Dt^{2k-1}, \frac{a_0}{\kappa \br}  \GInvP^{jl} \partial_j r \partial_i \right] v_l,
\]
which yields only balanced terms. 

The third term in \eqref{E:Source_w_eq-red} is also balanced due to the commutator 
structure, while the last term has a direct good factorization.

\bigskip

We next move to $h_{2k}$. From \eqref{E:Source_vort_2k_eq}
we see that we are commuting $r^a \partial^b$ with either
$\Dt$ or $\partial v$, so we always obtain $\partial v$
factors that give non-endpoint terms. The only possible exception
is when all derivatives in the commutator with $\Dt$ are applied to the $r$ term in $v^0$. 
But this yields a lower order term.
\end{proof}
\end{proof}

\section{Construction of regular solutions\label{S:Regular_solutions}}

In this section we provide the first step in our
proof of local well-posedness, namely, here 
we present a constructive proof of regular 
solutions.  The rough solutions are obtained 
in the last section as unique limits of regular solutions.

Given an initial data $(\mathring{r},\mathring{v})$ with regularity 
\[
( \mathring{r},\mathring{v}) \in \bfH^{2k},
\]
where $k$ is assumed to be sufficiently large, we will construct 
a local in time solution, bounded in  $\bfH^{2k}$,
with a lifespan depending on the $\bfH^{2k}$ size of the data.

\subsection{Construction of approximate solutions}
We discretize the problem with a time-step  $\epsilon > 0$.
Then, given an  initial data $( \mathring{r},\mathring{v}) \in \bfH^{2k}$,
our objective is to  produce a discrete approximate 
solution $(r(j\epsilon), v(j\epsilon))$, with properties as follows:

\begin{itemize}
\item (Norm bound)  We have
\begin{equation}
\nonumber 
E^{2k}(r((j+1)\epsilon),v ((j+1)\epsilon)) 
\leq (1+ C \epsilon) E^{2k}(r((j \epsilon),v (j\epsilon)) .
\end{equation}

\item (Approximate solution) 
\begin{equation}
\nonumber 
\left\{
\begin{aligned}
& r((j+1)\epsilon) - r(j \epsilon)  + \epsilon \left[
v^m \partial_m r +
r \GInvP^{ml} \partial_m v_l  + r a_1 v^l \partial_l r \right](j\epsilon)= O(\epsilon^{2}) \\
&v_i((j+1)\epsilon) - v_i(j \epsilon)    
+ \epsilon \left[ v^m \partial_m v_i+
a_2 \partial_i r\right] (j\epsilon)= O(\epsilon^{2}). 
\end{aligned}
\right.
\end{equation}
\end{itemize}
The first property will ensure a uniform energy bound for our sequence.
The second property will guarantee that in the limit we obtain an exact solution.
There we use a weaker topology, where the exact choice of norms is not so important (e.g. $C^2$).

Having such a sequence of approximate solutions, it is straightforward to  produce, as the   limit on a subsequence, an exact solution $(r,v)$ on a short time interval which stays bounded in the above topology. 
The key point is the construction of the above sequence. It suffices to carry out a single step:

\begin{theorem}\label{t:onestep}
Let $k$ be a large enough integer. Let $(\rz,\vz) \in \bfH^{2k}$ with size 
\[
E^{2k}(\rz,\vz) \leq M,
\]
and $\epsilon \ll_M 1$. Then there exists a one step iterate $(\ro,\vo)$ with the following properties:

\begin{enumerate}
\item (Norm bound)  We have
\begin{equation}
  \nonumber 
 E^{2k}(\ro,\vo) \leq (1+ C(M) \epsilon) E^{2k} (\rz,\vz),
\end{equation}

\item (Approximate solution) 
\begin{equation}
\nonumber 
\left\{
\begin{aligned}
& \ro - \rz  + \epsilon[
\vz^i \partial_i r + 
\rz \GInvz^{ij} \partial_i \vz_j  + \rz \mathring{a}_1 \vz^i \partial_i \rz] = O(\epsilon^{2}) 
\\
& \vo_i - \vz_i  + \epsilon [
\vz^j \partial_j \vz_i +
\mathring{a}_2 \partial_i \rz] = O(\epsilon^{2}) ,
\end{aligned}
\right.
\end{equation}
where $\GInvz$, $\mathring{a}_1$, and $\mathring{a}_2$ 
are $\GInv$, $a_1$, and $a_2$ evaluated at $(\rz,\vz)$.
\end{enumerate}
\end{theorem}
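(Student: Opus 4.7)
The plan is to construct $(\check r,\check v)$ by a three-step single iteration following \cite{IT-norel}: a paradifferential regularization at a dyadic scale $2^h$ with $2^{2h}\epsilon \sim 1$, followed by a transport step along the material flow, followed by a Newton-type update of the source terms. Regularizing compensates for the one derivative lost in a bare Euler step; the separate transport step handles the motion of the free boundary; the Newton step handles the remaining dispersive/reactive terms. The regularization scale $2^h\sim\epsilon^{-1/2}$ is chosen to balance the consistency error against the derivative loss.

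First I would apply Proposition~\ref{p:reg-state} to produce $(\mathring r^h,\mathring v^h)$, which by \eqref{app-uniform} is uniformly bounded in $\bfH^{2k}$, by \eqref{app-high} satisfies $\|(\mathring r^h,\mathring v^h)\|_{\bfH^{2k+2}}\lesssim 2^{2h}c_h$, and by \eqref{app-point0} obeys $\|\mathring r^h-\mathring r\|_{L^\infty}\lesssim 2^{-2(k-k_0+1)h}$, which for $k>k_0+1$ is $o(\epsilon^2)$. Second, I would advect the domain by the Lipschitz flow $\Phi_\epsilon$ of $(\mathring v^h)^i/(\mathring v^h)^0$ for time $\epsilon$, and then take a single explicit step of the source equations,
\[
\begin{aligned}
\check r &= \mathring r^h\!\circ\! \Phi_\epsilon^{-1} - \epsilon\bigl[\mathring r^h\GInvz^{ij}\partial_i\mathring v_j + \mathring r^h\mathring a_1\mathring v^i\partial_i\mathring r^h\bigr]\!\circ\!\Phi_\epsilon^{-1},\\
\check v_i &= \mathring v_i^h\!\circ\!\Phi_\epsilon^{-1} - \epsilon\,\mathring a_2\,\partial_i\mathring r^h\!\circ\!\Phi_\epsilon^{-1}.
\end{aligned}
\]
Property (2) of the theorem then follows from the Taylor expansion of $\Phi_\epsilon$ combined with the regularization error bound above. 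Non-degeneracy of $\check r$ on the new boundary $\check\Gamma:=\Phi_\epsilon(\mathring\Gamma)$ is preserved because the added interior terms vanish near $\check\Gamma$: the $r$-update carries a prefactor $\mathring r^h$, while the $v$-update contains only $\partial\mathring r^h$, whose boundary behavior is controlled by the assumption $\nabla r\in\tilde C^{1/2}$ via the embedding of Lemma~\ref{l:morrey}.

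The hard part is the energy bound (1). The strategy is to compare the good variables $(s_{2j},w_{2j})$ of $(\check r,\check v)$ with those of $(\mathring r,\mathring v)$ at the discrete level, mirroring the continuous energy identity of Theorem~\ref{T:Energy_estimates} at the discrete scale $\epsilon$. The transport step preserves $E^{2k}$ up to $O(\epsilon)$ commutator terms bounded by $C(M)E^{2k}$, by the transport structure of the vorticity and the material derivative. The Newton step, when rewritten in terms of $(s_{2j},w_{2j})$, reproduces exactly the wave-part linearized energy identity of Section~\ref{S:Linearized} up to a discrete second-order error of size $\epsilon^2\|(\mathring r^h,\mathring v^h)\|_{\bfH^{2k+2}}^2$; by the higher-regularity gain together with $2^{2h}\epsilon\sim 1$ this reduces to $O(\epsilon)E^{2k}$. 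The principal obstacle is that the discrete Newton update does not commute with the vector fields $\Dt,\partial$ and multiplication by $r$ that define $E^{2k}$, so many commutator terms must be accounted for. As in \cite{IT-norel}, this is handled by invoking the coercivity and self-adjointness of $L_1$ and $L_2+L_3$ from Lemma~\ref{L:Elliptic_estimates} (case II) to recover the required bounds on commutators from control of the good variables. The relativistic coefficients $a_0,a_1,a_2,\br$ enter only as smooth functions of $(r,v)$ and, by the order-counting of Section~\ref{S:Bookkeeping}, produce either balanced or strictly lower-order contributions, so the estimates of \cite{IT-norel} extend with only minor bookkeeping changes.
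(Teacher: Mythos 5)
Your overall three-step architecture (regularize, transport, Newton step) matches the paper's, and your transport and Newton substeps are essentially the paper's. However, there are two genuine gaps in the regularization and energy parts of the argument.

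First, you invoke Proposition~\ref{p:reg-state} (the frequency-envelope regularization of Section~\ref{S:Function_spaces}) to produce $(\mathring r^h,\mathring v^h)$. But the uniform bound \eqref{app-uniform} in that proposition only guarantees $\|(\mathring r^h,\mathring v^h)\|_{\bfH^{2k}} \lesssim_A \|(\mathring r,\mathring v)\|_{\bfH^{2k}}$ with an implicit constant that can be strictly larger than $1$ (and indeed depends on $A$). That is not enough: the theorem requires the precise $(1+C\epsilon)$ factor in the energy, and iterating a fixed constant $C_0 > 1$ rather than $(1+C\epsilon)$ over $\sim 1/\epsilon$ steps blows up. What the paper actually uses is the entirely separate Proposition~\ref{p:reg-step}, a specifically tailored regularization whose defining feature is exactly the bound $E^{2k}(r,v)\leq(1+C\epsilon)E^{2k}(\rz,\vz)$, together with the higher-regularity gains $\|(r,v)\|_{\H^{2k+1}}\lesssim \epsilon^{-1}M$ and $\|(r,v)\|_{\H^{2k+2}}\lesssim \epsilon^{-2}M$ (note the implied spatial frequency scale is $\epsilon^{-1}$, i.e. $2^{2h}\sim\epsilon^{-2}$, not your $2^{2h}\sim\epsilon^{-1}$). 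Obtaining the $(1+C\epsilon)$ bound under regularization is the technical heart of the construction; it relies on the functional calculus for the self-adjoint operators $L_1$ and $L_2+L_3$, their coercivity, and the algebraic identity of Lemma~\ref{l:Dsw} relating the linearized good variables to powers of those operators. Your proposal substitutes the wrong regularization and thus never establishes this crucial bound.

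Second, the arithmetic in your claimed energy estimate for the Newton step does not close. Under your scaling $2^{2h}\epsilon\sim 1$, \eqref{app-high} gives $\|(\mathring r^h,\mathring v^h)\|_{\bfH^{2k+2}}\lesssim 2^{2h}c_h\sim \epsilon^{-1}c_h$, so
\[
\epsilon^2\|(\mathring r^h,\mathring v^h)\|^2_{\bfH^{2k+2}} \lesssim \epsilon^2\cdot\epsilon^{-2}c_h^2 = c_h^2,
\]
which is merely $O(1)\cdot E^{2k}$ and not $O(\epsilon)E^{2k}$ as you assert. The missing factor of $\epsilon$ cannot be recovered from this crude bound; the smallness in the true argument does not come from a naive $\epsilon^2\times$(one-derivative-higher norm squared) estimate, but from the cancellation encoded in the Newton step (i.e. that the first variation of the energy along the approximate flow is governed by the linearized energy identity, with the quadratic remainder controlled via Lemma~\ref{l:Dsw}). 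Without that mechanism made explicit, the claimed $O(\epsilon)$ error is unjustified.
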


The strategy for the proof of the theorem is the same as in
the last two authors' previous paper \cite{IT-norel}, by splitting the time step into three:
\begin{itemize}
    \item Regularization,
    \item Transport,
    \item Newton's method,
\end{itemize}
where the role of the first two steps is to improve the error estimate in the third step. The regularization step is summarized in the next Proposition:

\begin{proposition}\label{p:reg-step}
Given  $(\rz,\vz) \in \bfH^{2k}$,  there exist  regularized versions $(r,v)$
with the following properties:
\begin{equation}
\nonumber 
r-\rz = O(\epsilon^{2}), \qquad v - \vz = O(\epsilon^{2}),
\end{equation}
respectively
\begin{equation}
\nonumber
E^{2k} (r,v)  \leq (1+ C \epsilon) E^{2k} (\rz,\vz) ,
\end{equation}
and
\begin{equation}
\nonumber 
%\| (r,v)\|_{\H^{2k+1}}   \lesssim \epsilon^{-1} M, \qquad 
\| (r,v)\|_{\H^{2k+2}}   \lesssim \epsilon^{-1}M.
\end{equation}
\end{proposition}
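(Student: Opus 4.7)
My plan is to realize the regularization $(r,v)$ as a single element of the dyadic regularization family $(\mathring{r}^h, \mathring{v}^h)$ provided by Proposition~\ref{p:reg-state}, choosing the dyadic scale $h$ so that $2^h \sim \epsilon^{-1}$, which is the Littlewood-Paley analogue of convolving at spatial scale $\epsilon$ in the wave direction. With this choice, the higher regularity estimates \eqref{app-high} immediately yield
\[
\|(r,v)\|_{\H^{2k+1}} \lesssim 2^h c_h \lesssim \epsilon^{-1} M, \qquad \|(r,v)\|_{\H^{2k+2}} \lesssim 2^{2h} c_h \lesssim \epsilon^{-2} M,
\]
after absorbing the slowly varying envelope $c_h \in \ell^2$ into $M$ via \eqref{app-fe}. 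The low frequency approximation bound \eqref{app-point0} gives $\|r - \mathring{r}\|_{L^\infty} \lesssim 2^{-2(k - k_0 + 1) h} \lesssim \epsilon^{2(k-k_0+1)}$, which is $O(\epsilon^2)$ as soon as $k$ is taken large enough that $k - k_0 + 1 \geq 1$; the corresponding bound for $v - \mathring{v}$ is produced by the same dyadic machinery.

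The main obstacle is the sharp $(1+C\epsilon)$ energy inequality, since the uniform bound \eqref{app-uniform} alone only gives $\lesssim$. The strategy is a telescoping argument based on the dyadic difference bound \eqref{app-diff}:
\[
E^{2k}(\mathring{r},\mathring{v}) - E^{2k}(r,v) = \sum_{j \geq h} \bigl[ E^{2k}(\mathring{r}^{j+1}, \mathring{v}^{j+1}) - E^{2k}(\mathring{r}^{j}, \mathring{v}^{j}) \bigr].
\]
Each summand is controlled by integrating the differential of $E^{2k}$ along the linear path between consecutive iterates. Since $E^{2k}$ is, via Theorem~\ref{T:Energy_norm_equivalence} and the explicit construction in Section~\ref{S:Good_variables_energy_functional}, morally the square of an $\H^{2k}$ norm, this differential is a dual pairing that, after integration by parts, factors as a product of a low-regularity difference bounded by \eqref{app-diff} (yielding $2^{-2jk} c_j$) and a high-regularity factor bounded by \eqref{app-high} at a matched dual level (yielding $2^{2jk} c_j$ up to an $O(M)$ size constant). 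The geometric loss $2^{-2j}$ that remains after pairing gives a summand of size $c_j^2 \cdot 2^{-2j} \cdot M$, and summing over $j \geq h$ produces a total of $O(2^{-2h} M) = O(\epsilon M)$, which together with the coercivity from Theorem~\ref{T:Energy_norm_equivalence} delivers the desired $(1 + C(M)\epsilon)$ factor.

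The main technical point, which I will handle exactly as in \cite{IT-norel}, is that $E^{2k}$ is not literally a quadratic form but only equivalent to one, with coefficients depending nonlinearly on $(r,v)$. The differential of the coefficient contributions is bounded using the bookkeeping scheme of Section~\ref{S:Bookkeeping}, ensuring that variations of the coefficients are genuinely of lower order and thus absorbed in the same telescoping. Care is also needed to balance the $r$ weights between the low- and high-regularity factors so that the dual pairing is carried out in a weight-compatible manner; here the fact that $(r^j, v^j)$ is a regularization with the same defining function structure as $(r, v)$, together with the flexibility in the choice of $\tilde{r}$ in \eqref{app-diff}, is what makes the scheme consistent.
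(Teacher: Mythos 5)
Your reduction of the higher-norm bounds to \eqref{app-high} and of the closeness bound to \eqref{app-point0} (with $2^h\sim \epsilon^{-1}$) is fine as far as it goes, but the heart of the proposition is the sharp factor $(1+C\epsilon)$ in the energy inequality, and your mechanism for it does not work. In the telescoping sum, the consecutive difference at scale $j$ is controlled by \eqref{app-diff} only in $\H$, with size $2^{-2jk}c_j$; pairing it against the high-regularity factor at the dual level $\H^{4k}$, whose size by \eqref{app-high} is $2^{2jk}c_j$, produces a summand of size $c_j^2$ -- there is no residual factor $2^{-2j}$, and none can be manufactured by re-balancing the levels (any interpolation between \eqref{app-diff} and \eqref{app-uniform} paired against the matching dual norm again yields exactly $c_j^2$). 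Summing over $j\ge h$ therefore gives $\sum_{j\ge h}c_j^2$, which for arbitrary data with $E^{2k}\le M$ is only $O(M)$: a frequency envelope need not decay at any quantified rate, so the tail is not $O(\epsilon M)$ (your own bookkeeping also slips here, since $2^{-2h}M=\epsilon^2M$, not $\epsilon M$). What your argument proves is $E^{2k}(r,v)\le (1+C)E^{2k}(\rz,\vz)$, i.e.\ \eqref{app-uniform}, which is useless for the iteration: the loss of a fixed constant per time step blows up like $C^{T/\epsilon}$. A secondary gap is that Proposition \ref{p:reg-state} gives no quantitative rate for $v-\vz$ (only \eqref{app-point}), so the claimed $O(\epsilon^2)$ bound for $v$ is not actually available from the cited machinery.

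This is also why the paper does not use the Littlewood--Paley kernels $\Psi^h$ of Section \ref{S:Function_spaces} for this step. Its proof (following Section~6 of \cite{IT-norel}) builds the regularization from functional calculus of the self-adjoint transition operators $L_1$ and $L_2+L_3$ -- this is precisely the purpose of the variants \eqref{E:L_2_def}--\eqref{E:L_3_def} and of the recurrence relations \eqref{E:Recurrence_equations}: since the good variables satisfy $s_{2j}\approx L_1 s_{2j-2}$, $w_{2j}\approx (L_2+L_3)w_{2j-2}$ modulo perturbative terms, a spectral regularization at scale $\epsilon$ nearly commutes with the energy, and the leading quadratic part of $E^{2k}$ is non-increasing under it, leaving only commutator and coefficient-variation errors of size $O(\epsilon)\,E^{2k}$; Lemma \ref{l:Dsw} and the coercivity of $L_1$, $L_2+L_3$ are what make this bookkeeping close. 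To salvage your route you would have to prove an almost-projection property of the kernels $\Psi^h$ with respect to the variable-coefficient, weighted, domain-dependent energy $E^{2k}$ with a gain of $O(2^{-h})$, which is not among the stated properties \eqref{app-uniform}--\eqref{app-diff} and is essentially a different (and harder) statement than the one you are allowed to cite.
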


\begin{proof}
We repeat the construction in \cite{IT-norel}. There are
only a few minor differences, namely:
\begin{itemize}
    \item The self-adjoint operators $L_1$, $L_2$ and $L_3$ there are replaced by their counterparts in this paper, i.e., \eqref{E:L_1_def_not_symmetric}, \eqref{E:L_2_def},
    and \eqref{E:L_3_tilde_def} (recall that $L_1=\hat{L}_1$ and
    $L_3 = \tilde{L}_3$). 
    \item Using \eqref{E:L_hat_no_hat_relation}, 
    relations similar to 
    \eqref{E:Recurrence_equations} continue to 
    hold for the self-adjoint operators. Thus, the approximate
    relations between $(s_{2k},w_{2k})$ and
    $(s^-_{2k},w^-_{2k})$ in Section 6 of \cite{IT-norel} also hold here.
    \item The elliptic estimates of Lemma \ref{L:Elliptic_estimates} hold for $L_1$ and $L_2,L_3$, 
    with essentially the same proof.
\end{itemize}
Aside from the above minor differences, the most important
observation in invoking the proof given 
in \cite{IT-norel} 
is that the counterpart of Lemma 6.3 in \cite{IT-norel} 
still holds with a minor change. For convenience we state here its counterpart
(below, $D s_{2k}$ and 
$D w_{2k}$ are the differentials
of $s_{2k}$ and $w_{2k}$ as functions
of $r$ and $v$):

\begin{lemma}\label{l:Dsw}
We have the algebraic relations
\begin{equation} 
\nonumber 
\left\{
\begin{aligned}
&Ds_{2k}(\ro,\vo) (\rz-\ro,\vz-\vo) \, = \ \ (L_{1}(\ro))^k (\rz-\ro) + \tilde F_{2k}
\\
&Dw_{2k}(\ro,\vo) (\rz-\ro,\vz-\vo) =  \  (L_{2}(\ro))^k (\vz-\vo) + 
\tilde G_{2k},    
\end{aligned}
\right.
\end{equation}
where the error terms $(\tilde F_{2k},\tilde G_{2k})$ are linear in 
$(\rz-\ro,\vz-\vo)$,
\[
\tilde F_{2k} = D^1_{2k}(\ro,\vo)(\rz-\ro,\rz-\vo),
\qquad \tilde G_{2k} = D^2_{2k}(\ro,\vo)(\rz-\ro,\rz-\vo).
\]
Their coefficients are multilinear differential expressions in 
$(\ro,\vo)$, have order at most $k-1$, respectively $k-\frac12$, 
and whose monomials fall into one of the following two classes:
\begin{enumerate}[label=\roman*)]
\item Have maximal order but contain at least one factor with order $> 0$,
i.e. $\partial^{2+} \ro$ or $\partial^{1+}\vo $, or
\item Have order strictly below maximum.
\end{enumerate}
\end{lemma}

By comparison, the similar relations in Lemma 6.3 in \cite{IT-norel}
are homogeneous, so only terms of type (i) arise in the error terms.
Here our equations are no longer homogeneous, and lower order terms do
appear. In particular, we note that all the contributions 
coming from the last term  in the first equation \eqref{E:r_eq}
belong to the class (ii) above. This is correlated with and motivates the fact that this term was neglected in our definition of the operator $L_1$.

With these observations in mind, the proof given in 
\cite{IT-norel} applies directly here.
\end{proof}

We now use Proposition~\ref{p:reg-step} in order to prove Theorem~\ref{t:onestep}. 

\medskip

\noindent \emph{Proof of Theorem~\ref{t:onestep}.}
For the transport step, we define 
\begin{equation}
\nonumber 
\xo^i = x^i + \varepsilon \frac{v^i(x)}{v^0(x)},
\end{equation}
where, in agreement with the our definition of the
material derivative, we iterate the coordinates by flowing with $v^i/v^0$, and not simply $v^i$.

Then we carry out the Newton step, and define $(ro,vo)$ by
\begin{equation*}
\left\{
\begin{aligned}
&\ro(\xo)  = r(x)
-\varepsilon\Big[   r \GInvP^{ij} \partial_i v_j  + r a_1 v^i \partial_i r \Big ](x)
\\
&\vo_i(\xo)  = v_i(x) - \varepsilon \left[ a_2 \partial_i r
\right] (x).
\end{aligned}
\right.
\end{equation*}

To show that $(\ro,\vo)$ have the properties in the Theorem, the argument is 
completely identical to the one in \cite{IT-norel}.
\qed 

\section{Rough solutions and continuous dependence\label{S:Rough_solutions_and_continuation}} 
The last task of the current work is to construct rough solutions as limits of smooth solutions, and conclude the proof of Theorem~\ref{T:LWP}.  Fortunately, the arguments in the preceding paper \cite{IT-norel}
by the last two authors for the similar part of the results apply word for word. This is despite the fact there are several differences between the  two problems that play a role on how the  energy estimates are obtained, as well as on how  uniqueness is proved.   However, the functional framework developed in \cite{IT-norel} and also implemented here does not see these differences. Furthermore,  the proof of the similar result in \cite{IT-norel} only uses (i) the regularization procedure in Section~\ref{S:Function_spaces}, (ii) the difference bounds of Theorem~\ref{T:Uniqueness}, and (iii) the energy estimates of Theorem ~\ref{T:Energy_estimates}, without any reference to their proof.

Thus, in our current result we rely on the same  succession of steps  as in the non-relativistic companion work of the last two authors \cite{IT-norel}, which we briefly outline here for the reader. These steps are:

\medskip

\noindent \emph{1. Regularization of the initial data.} We regularize the initial data; this is achieved by considering a family of dyadic regularizations of the initial data as described in Section~\ref{S:Function_spaces}. These data generate corresponding smooth solutions by Theorem~\ref{T:LWP}.  For these smooth solutions we control on the one hand higher Sobolev norms $\mathcal{H}^{2k+2j}$
using our energy estimates in Theorem~\ref{T:Energy_estimates}, and on the other hand the $L^2$-type distance between consecutive 
solutions,  which  is  at  the  level  of  the $\mathcal{H}$ norms, by Theorem~\ref{T:Uniqueness}. 

\medskip

\noindent \emph{2. Uniform bounds for the regularized solutions.}  To prove these bounds  we use a bootstrap argument on our control norm $B$, where $B$ is time dependent. The need for an argument of this kind is obvious. Once we have the regularized data sets $(\rz^h,\vz^h)$, we also have the corresponding smooth solutions  $(r^h,v^h)$ generated by the smooth data $(\rz^h,\vz^h)$. A-priori these solutions exist on a time interval that  depends on $h$. Instead, we would like to have a lifespan bound which is independent of $h$. This step  requires closing the bootstrap argument via the  energy estimates already obtained in Section~\ref{S:Energy_estimates}.

\medskip

\noindent \emph{3. Convergence of the regularized solutions.}  We obtain the convergence of the regular solutions $(r^h,v^h)$ to the rough solution $(r,v)$ by combining  the  high  and  the  low  regularity bounds  directly. This yields  rapid  convergence  in  all $\mathbf{H}^{2k'}$ spaces  below  the  desired  threshold, i.e.   for $k'<k $. Here we  rely primarily on results in Section~\ref{S:Uniqueness}, namely Theorem~\ref{T:Uniqueness}.

\medskip

\noindent \emph{4. Strong convergence.} Here we prove the convergence of the smooth solutions to the rough limit in the strong topology $\bfH^{2k}$. To  gain  strong  convergence   in $\bfH^{2k}$ we  use  frequency  envelopes   to  more accurately control both the low and the high Sobolev norms above.
This allows us to bound differences in the strong $\bfH^{2k}$ topology.  
A similar argument yields continuous dependence of the solutions in terms of the initial data, also in the strong topology. For more details we refer the reader to \cite{IT-norel}.

\bigskip

\bibliography{euler.bib}

\end{document}